\newcommand{\dd}{\mathrm{d}}
\newcommand{\R}{\mathbb{R}}
\newcommand{\N}{\mathbb{N}}
\newcommand{\E}{\mathbf{E}}
\newcommand{\p}{\mathbf{P}}
\newcommand{\ii}{\mathfrak{i}}
\newcommand{\Var}{\mathbf{Var}}
\newcommand{\bone}{\mathds 1}
\newcommand{\dint}{\int \hspace*{-5pt} \int}
\newcommand{\trint}{\int \hspace*{-5pt} \int \hspace*{-5pt} \int}
\DeclareMathOperator{\Leb}{Leb}
\theoremstyle{plain}
\newtheorem{lemma}{Lemma}
\newtheorem{theorem}{Theorem}
\newtheorem{corollary}{Corollary}
\theoremstyle{remark}
\newtheorem{remark}{Remark}
\title{Almost sure growth of integrated supOU processes}
\author[1]{Danijel Grahovac\thanks{dgrahova@mathos.hr}}
\author[2]{P\'eter Kevei\thanks{kevei@math.u-szeged.hu}}
\affil[1]{School of Applied Mathematics and Informatics, J. J. Strossmayer University of Osijek, Trg Ljudevita Gaja 6, 31000 Osijek, Croatia}
\affil[2]{Bolyai Institute, University of Szeged, Aradi v\'ertan\'uk tere 1, 6720 Szeged, Hungary}
\date{}
\begin{document}

\maketitle

\begin{abstract}
Superpositions of Ornstein-Uhlenbeck processes allow a flexible dependence structure, including 
long range dependence for OU-type processes. Their complex asymptotics are governed by three effects:
the behavior of the L\'evy measure both at infinity and at zero, and the behavior at zero of the measure governing the dependence. We establish almost sure rates of growth depending on the characteristics of the process and prove a Marcinkiewicz--Zygmund type SLLN for the integrated process.

\textit{AMS 2010 Subject Classiﬁcations:} 60G17, 60F15, 60G55, 60G57

\textit{Keywords:} supOU processes, infinitely divisible random measure, 
Marcinkiewicz--Zygmund type strong law of large numbers, 
almost sure properties, rate of growth
\end{abstract}

\medskip

\section{Introduction}

A superposition of Ornstein-Uhlenbeck type processes (\textit{supOU process}) 
\cite{Barn} is a strictly stationary process $(X(t))_{t\in \R}$ given by 
\begin{equation}\label{eq:supOU}
X(t)= \dint_{(0,\infty) \times (-\infty,t]} e^{-x( t - s)} \Lambda(\dd x, \dd s),
\end{equation}
where $\Lambda$ is a homogeneous infinitely divisible independently scattered random measure on $(0,\infty) \times \R$, 
meaning that for every sequence 
$A_1,A_2,\dots$ of disjoint bounded sets from $\mathcal{B} \left((0,\infty) \times \R\right)$, the random variables $\Lambda(A_1)$, $\Lambda(A_2)$, $\dots$ are independent. 
Moreover, for bounded
$A \in \mathcal{B} \left((0,\infty) \times \R\right)$ we have
\begin{equation}\label{eq:Lambdace}
\begin{split}
& \log \E e^{ \ii \theta \Lambda(A) }  \\  
& = \left( \pi \times \Leb \right) (A) \left(\ii \theta a 
-\frac{\theta^{2}}{2} b  + \int_{\R} \left( e^{\ii \theta z} - 1 -
\ii \theta z \bone(|z| \leq 1)\right) \lambda(\dd z) \right),
\end{split}
\end{equation}
where $\pi$ is a measure on $(0,\infty)$ such that $m_{-1}(\pi):=\int_{(0,\infty)} x^{-1} \pi(\dd x) < \infty$, and $(a,b,\lambda)$ is a L\'evy-Khintchine triplet of some infinitely divisible distribution such that $\int_{|z|>1} \log |z| \lambda (\dd z ) < \infty$. The \textit{generating quadruple} $(a,b,\lambda,\pi)$ completely determines the distribution of the supOU process.

The one-dimensional marginal distribution of the supOU process corresponds to that of Ornstein--Uhlenbeck type
process driven by the L\'evy process $L$ with characteristic triplet $(a,b,\lambda)$. In particular, 
\begin{equation*}
X(t) \overset{d}{=} m_{-1}(\pi) \int_{0}^{\infty} e^{-s} L(\dd s),
\end{equation*}
hence the marginal distribution is controlled by the triplet $(a,b,\lambda)$. L\'evy driven Ornstein--Uhlenbeck processes always have exponentially decaying correlations,
whenever their second moment exists. However, supOU processes have more complex dependence structure since the correlation 
function of $X$ is the Laplace transform of $x^{-1}\pi(\dd x)$, 
provided that $\E X(t)^2<\infty$. See 
\cite{Barn,barndorff2018ambit,barndorff2011multivariate} for more details.

In this paper we investigate the almost sure behavior of the \textit{integrated supOU process}
\begin{equation}\label{eq:integratedsupOU}
X^*(t) = \int_0^t X(u) \dd u.
\end{equation}
In our main results, Theorems \ref{thm:fv} and \ref{thm:infv} below, we prove a Marcinkiewicz--Zygmund type strong law of 
large numbers for the integrated process. The classical Marcinkiewicz--Zygmund theorem, see e.g.~\cite[Theorem 6.7.1]{Gut}, states that if iid random variables $Y_1,Y_2,\dots$ have finite moment of order $\gamma \in (1,2)$, then the centered partial sum 
$S_n=\sum_{i=1}^n (Y_i - \E Y_i)$ is $o(n^{1/\gamma})$ almost surely. For the same to hold for $X^*$, the main condition is that
\begin{equation} \label{eq:gamma-cond:intro}
\dint_{(0,\infty) \times \R} \frac{|z|^\gamma}{x^\gamma} \bone ( |z| > x) 
\pi(\dd x) \lambda(\dd z)   < \infty.
\end{equation}
This implies that $\E |X^*(1)|^\gamma < \infty$, however, \eqref{eq:gamma-cond:intro} is stronger than the moment assumption alone, as it is also affected by the behavior of $\pi$ and $\lambda$ near 0. On the other hand, in the purely Gaussian case, a law of iterated logarithm holds (Theorem \ref{thm:GaussianLIL}).

The motivation for studying the almost sure rate of growth comes from limiting properties established recently in a series of papers \cite{GLST2019,GLT2019Limit,GLT21}. The integrated supOU processes satisfy distributional limit theorems of the form
\begin{equation}\label{eq:intro:limittheorem}
\left\{ \frac{X^*(Tt) - \E X^*(Tt)}{T^H \widehat{\ell}(T)} \right \} 
\stackrel{\text{fdd}}{\longrightarrow} 
\left\{ Z(t) \right\}, \quad \text{ as } T \to \infty,
\end{equation}
with some slowly varying function $\widehat{\ell}$, $H \in (0,1)$ and limiting process $Z$ depending on the generating quadruple $(a,b,\lambda,\pi)$. 
Despite satisfying limit theorems, integrated supOU processes exhibit \textit{moment intermittency} (see e.g.~\cite{carmona1994parabolic,CK2019,gartner2007,Khos14,Khos17,zel1987}). Namely, their higher order moments may grow faster that those of the limiting process, i.e.
\begin{equation}\label{eq:intro:momsupOU}
\lim_{t\to \infty} \frac{\log \E |X^*(t) - \E X^*(t)|^q}{\log t} = \begin{cases}
H q, & 0\leq q\leq \frac{\alpha}{1-H},\\
q-\alpha, & q\geq \frac{\alpha}{1-H},
\end{cases}
\end{equation}
where $H$ is the self-similarity parameter of the limiting process $Z$ in \eqref{eq:intro:limittheorem} and $\alpha$ is parameter related to behavior of $\pi$ near zero. Intermittency implies that for any $\varepsilon>0$ and $\delta>0$ we eventually have \cite{GLT2022frac}
\begin{equation}\label{eq:intro:ldpstatement}
 \p \left( |X^*(t) - \E X^*(t)| > c t^{1-\varepsilon} \right) \geq t^{-\alpha-\delta}.
\end{equation}
Therefore, the probability that the normalized integrated supOU process
exhibits increasingly large values decreases as a power function of time. 
This small fraction of probability space where $X^*(t)$ is unduly large provides
the main contribution to higher order moments resulting in moment intermittency 
\eqref{eq:intro:momsupOU}. In particular, classical large deviation results with 
exponentially decaying probabilities do not typically hold in limit theorems for supOU processes \cite{GLT2022frac}.

These limiting results suggest that $X^*$ has a similar behavior of moments as the sequence of independent random variables $(Y(n))_{n\in \N}$ given by
\begin{equation*}
Y(n) = \begin{cases}
n^H,& \text{ with probability } 1-n^{-\alpha},\\
n,& \text{ with probability } n^{-\alpha}.\\
\end{cases}
\end{equation*}
Moreover, for $\alpha<1$ probabilities $\p (Y(n) \geq n)$ (as those in \eqref{eq:intro:ldpstatement}) are not summable. By the second 
Borel--Cantelli lemma $Y(n)\geq n$ happens infinitely often. 

The main motivation behind this paper is to investigate whether the same is true for paths of integrated supOU processes. Namely, we wonder whether the normalized integrated supOU process exhibits \textit{physical intermittency}, meaning that at large times it exhibits increasingly tall peaks (see e.g.~\cite{carmona1994parabolic,CK,Khos23,Khos20,zel1987}). 
We show that this not so. In particular, 
for any $\varepsilon>0$, we have that
\begin{equation*}
\lim_{t\to \infty} \frac{X^*(t)- \E X^{*}(t)}
{t^{H+\varepsilon}} = 0 \quad \text{a.s.}
\end{equation*}
We conclude that the paths of 
normalized integrated supOU process do not exhibit increasingly large values infinitely 
often, even though \eqref{eq:intro:momsupOU} and \eqref{eq:intro:ldpstatement} would 
suggest differently at first. This can be compared with the recently observed phenomenon 
of \textit{dissipation} in SPDEs \cite{Khos23,Khos20}.

Our results provide a rare example of a law of the iterated logarithm type and limsup type results for processes with strong dependence. Such results are typically known only for Gaussian \cite{orey1972,watanabe1970} or self-similar processes, like, for example, the law of iterated logarithm for fractional Brownian motion, limsup results for $\alpha$-stable L\'evy processes \cite{Sato},  linear fractional stable process \cite{takashima1989sample}, or for general self-similar processes (see \cite{kono1983iterated,takashima1989sample}). Beyond the class of self-similar processes, let us mention that for L\'evy processes the rate of growth depends on the regular variation index of the L\'evy measure at infinity (see \cite[Section 48]{Sato} and the references therein).

One may view supOU processes as aggregation of L\'evy driven Ornstein-Uhlenbeck processes with random mean reverting parameter. Such models have been extensively studied ever since the seminal paper \cite{granger1980} when it was realized that by aggregating simpler processes one may obtain complex dependence properties like long range dependence (see e.g.~\cite{leipus2003,oppenheim2004,philippe2014} and the references therein). However, law of iterated logarithm type results for such processes have been unknown so far. Moreover, distributional limit theorems for  supOU processes are similar to that of trawl processes \cite{barndorff2018ambit,GLT18trawl,pakkanen2021,talarczyk2020}. Hence, our results may pave the way for establishing similar pathwise results for these classes of processes. 

From a modeling perspective, supOU processes offer the appealing feature of allowing the marginal distribution and the dependence structure to be modeled independently. They have been used in various applications, mainly in finance where they provide a model for stochastic volatility (see e.g.~\cite{bn2013supOUSV,curato2019,fuchs2013}). In this context, the integrated process \eqref{eq:integratedsupOU} is the integrated volatility. SupOU processes have also been applied in other areas, like e.g.~in astrophysics \cite{kelly2013active}.

The paper is organized as follows. Section \ref{sec:main} contains the main results. In Section \ref{sec:pre} we give some preliminary facts needed for the proofs and also discuss moments and tail behavior of the integrated process. The proofs of the main results are given in Sections \ref{sec:proof-fv} and \ref{sec:proof-infv}. Section \ref{sec:proof-Gaussian}
contains the proof of the Gaussian case. In Appendix A we prove a stochastic Fubini 
theorem and in Appendix B we provide an exotic example of measures $\pi$ and $\lambda$.

Throughout the paper $c$ will denote a constant that may change from row to row. For any measure $Q$ on $(0,\infty)$ we denote 
\begin{equation*}
m_p ( Q ) = \int_{(0,\infty)} x^{p} Q (\dd x),
\end{equation*}
and for its tail we write
\begin{equation*}
\overline Q(r) = Q((r,\infty)), \quad r > 0.
\end{equation*}

\section{The almost sure growth}\label{sec:main}

The random measure $\Lambda$ in \eqref{eq:supOU} has a modification with the 
L\'evy--It\^{o} decomposition (see \cite[Theorem 4.5]{pedersen2003levy} and 
\cite[Theorem 2.2]{barndorff2011multivariate}). We will work with this modification 
so that 
\begin{equation}\label{eq:Lambda}
\begin{split}
\Lambda(\dd x, \dd s)  = & a \pi(\dd x) \dd s  +  \Lambda^G (\dd x, \dd s) 
+\int_{|z|\leq 1} z  (\mu - \nu) ( \dd x, \dd s, \dd z) \\
& + \int_{|z|>1} z  \mu ( \dd x, \dd s, \dd z),
\end{split}
\end{equation}
where $\Lambda^G$ is Gaussian random measure defined as in \eqref{eq:Lambdace} with generating quadruple $(0,b,0,\pi)$ and $\mu$ is a Poisson random measure on $(0,\infty) \times \R \times \R$ with intensity measure $\nu (\dd x, \dd s, \dd z) = \pi(\dd x) \dd s \, \lambda(\dd z)$.
Furthermore, the existence of a measurable modification of the supOU process 
$(X(t))_{t \in \R}$ in \eqref{eq:supOU} follows from \cite[p.~926]{bnbasse2011}. 
In the following, we will work with such a modification. However, we note that 
c\`adl\`ag modification does not necessarily exist, 
see \cite[Proposition 1]{GK}.

\begin{remark}
The supOU processes were introduced in \cite{Barn} with a slightly different 
parametrization ($-x t + s$ in the exponent in \eqref{eq:supOU}). Representation 
\eqref{eq:supOU} is a moving 
average representation, which was first used in \cite{Fasen05} 
(see also \cite{FasenKlupp}). 
The conditions $\int_{(0,\infty)} x^{-1} \pi(\dd x) < \infty$ and 
$\int_{|z|>1} \log |z| \lambda (\dd z ) < \infty$ are necessary and sufficient for the 
existence of $X$ by Theorem 3.1 in \cite{Barn}.
See also \cite[Proposition 2.1]{FasenKlupp} and \cite{barndorff2011multivariate} 
with the unnecessary assumption that $\pi$ is a probability measure.
\end{remark}

The integrated process \eqref{eq:integratedsupOU} may be written in the form
\begin{equation*}
X^*(t)  = \int_0^t X(u) \dd u = \dint_{(0,\infty) \times \R} f_t(x,s)  \Lambda(\dd x, \dd s),
\end{equation*}
where
\begin{equation*}
f_t(x, s) = 
\begin{cases}
x^{-1} (1 - e^{-x t}) e^{x s}, & s \leq 0, \\
x^{-1} (1 - e^{-x (t - s)}) \bone(s \leq t), & s > 0.
\end{cases}
\end{equation*}
The use of the stochastic Fubini theorem is justified in 
\cite[Theorem 3.1]{bnbasse2011} and
\cite[Lemma 4.1]{GLT21} (see the Appendix for more details). 
By \eqref{eq:Lambda}, we have
\[
\begin{split}
X^*(t) & = a \int_{(0,\infty)} \int_{\R} f_t(x,s)  \pi(\dd x) \dd s 
+  \int_{(0,\infty)} \int_{\R} f_t(x,s) \Lambda^G(\dd x, \dd s)\\
& \quad +  \int_{(0,\infty)} \int_{\R} \int_{|z|\leq 1} z f_t(x,s)  
(\mu - \nu) ( \dd x, \dd s, \dd z) \\
& \quad + \int_{(0,\infty)} \int_{\R} \int_{|z|>1} z f_t(x,s) 
\mu ( \dd x, \dd s, \dd z).
\end{split}
\]
Since $\int_{(0,\infty)} \int_{\R} f_t(x,s)  \pi(\dd x) \dd s  = t m_{-1}(\pi)$, the behavior of the deterministic part is clear. 
Hence, in the following we assume that $a=0$ whenever 
$\int_{|z| \leq 1} |z| \lambda(\dd z )  = \infty$, and 
when $\int_{|z|\leq 1} |z| \lambda(\dd z)<\infty$, then we assume
$a = \int_{|z|\leq 1} z \lambda(\dd z)$, so that there is no centering. 
In particular, if $\int_{|z|>1} |z| \lambda(\dd z)<\infty$, then
\begin{equation}\label{eq:meanofX}
\E X^*(t) =\begin{cases}
t m_{-1}(\pi) \int_{\R} z \lambda(\dd z), & 
\text{ if } \int_{|z|\leq 1} |z| \lambda(\dd z)<\infty,\\
t m_{-1}(\pi) \int_{|z|>1} z \lambda(\dd z), & 
\text{ if } \int_{|z|\leq 1} |z| \lambda(\dd z)=\infty.
\end{cases}
\end{equation}
Moreover, we assume for the moment that there is no Gaussian component in the generating quadruple.

The technique of the proof depends on whether the Poisson random measure needs 
compensation or not. Hence, we split the results in these two cases.

\begin{theorem}\label{thm:fv}
Assume that $a = \int_{|z|\leq 1} |z| \lambda(\dd z)<\infty$ and $b=0$.
\begin{enumerate}
\item If for some $\gamma \in (0,2)$ 
\begin{equation} \label{eq:gamma-cond}
\dint_{(0,\infty) \times \R} \frac{|z|^\gamma}{x^\gamma} \bone ( |z| > x) 
\pi(\dd x) \lambda(\dd z)   < \infty,
\end{equation}
and if $\gamma=1$ additionally $m_{-1-\delta} (\pi) < \infty$ for some $\delta>0$, then 
\[
\lim_{t \to \infty} \frac{X^*(t) - \bone(\gamma \geq 1) \E X^*(t)}{t^{1/\gamma}} = 0 \quad \text{a.s.}
\]
\item If \eqref{eq:gamma-cond} holds for $\gamma  = 2$, then
\[
\limsup_{t \to \infty} 
\frac{|X^*(t) - \E X^*(t)|}{\sqrt{2 t \log \log t}} = 
\sqrt{\Var X^*(1)} \quad \text{a.s.}
\]
\end{enumerate}
\end{theorem}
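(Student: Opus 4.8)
My plan is to treat the two parts by first obtaining a moment bound for increments of $X^*$ over dyadic time scales, then running a standard Borel--Cantelli/chaining argument to pass from these bounds to the almost sure statements. The structure of $X^*$ as an integral of $f_t(x,s)$ against the (compensated or raw) Poisson measure is the right object: since $a=\int_{|z|\le1}|z|\lambda(\dd z)<\infty$, the small-jumps integral need not be compensated, so $X^*(t)$ is (up to the explicit linear drift $tm_{-1}(\pi)\int z\,\lambda(\dd z)$) a genuine integral against $\mu$, and is of finite variation. The key analytic input is that \eqref{eq:gamma-cond} is precisely the condition forcing $\E|X^*(1)|^\gamma<\infty$ together with a uniform-in-$t$ control of the form $\E|X^*(t)-X^*(s)|^\gamma\le c\,(t-s)$ for $\gamma\in(0,1]$, and $\E|X^*(t)-X^*(s)-\E(\cdots)|^\gamma\le c\,(t-s)$ for $\gamma\in(1,2)$ after centering, which one gets from the $L^\gamma$ (or $L^2$ when $\gamma=2$) estimate for integrals against Poisson random measures — the $\bone(|z|>x)$ cutoff and the $x^{-\gamma}$ weight come from splitting $f_t(x,s)\le x^{-1}\min(1,xt)$ and integrating $s$ out. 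I would isolate this as a preliminary lemma (it is essentially the ``moments and tail behavior of the integrated process'' promised for Section \ref{sec:pre}).

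For part (1), I would set $t_n=2^n$ and first prove $X^*(2^n)/2^{n/\gamma}\to0$ a.s.\ along the dyadic sequence: with $Y_n:=X^*(2^n)-X^*(2^{n-1})-\bone(\gamma\ge1)\E[\cdots]$ one has $\E|Y_n|^\gamma\le c\,2^n$, and one needs $\sum_n \p(|X^*(2^n)-\bone(\gamma\ge1)\E X^*(2^n)|>\varepsilon 2^{n/\gamma})<\infty$. For $\gamma<1$ this is immediate from $\E|X^*(2^n)|^\gamma\le c\,2^n$ and Markov's inequality $\p(\cdots)\le c\,2^n/(\varepsilon 2^{n/\gamma})^\gamma = c\varepsilon^{-\gamma} 2^{n(1-1)}$ — wait, that is only $O(1)$, not summable, so the crude bound is not enough and I must instead exploit independence of the increments over disjoint time-blocks coming from the independently scattered property of $\Lambda$. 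Concretely the increments $X^*(2^n)-X^*(2^{n-1})$ are \emph{not} independent (they depend on jumps before $2^{n-1}$ through the factor $e^{xs}$), but the dependence is exponentially weak in $x$; I would decompose $X^*(t)=X^*_{<}(t)+X^*_{>}(t)$ according to whether the jump time $s$ lies in a fixed window $[t-K\log t,t]$ or before it, show the ``far past'' part $X^*_{<}(2^n)$ is a.s.\ $o(2^{n/\gamma})$ by a direct Borel--Cantelli using the extra decay $e^{-xK\log t}=t^{-xK}$ (here the extra assumption $m_{-1-\delta}(\pi)<\infty$ when $\gamma=1$ is used to integrate $x^{-1-\delta}$ near $0$), and handle the ``recent'' part by a Marcinkiewicz--Zygmund argument for the now essentially independent blocks, invoking the classical MZ SLLN (\cite[Theorem 6.7.1]{Gut}) or a direct $L^\gamma$/Kolmogorov maximal-inequality estimate. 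The second step is to fill the gaps between dyadic points: for $t\in[2^{n-1},2^n]$ write $X^*(t)-X^*(2^{n-1})$, whose $L^\gamma$ norm is $\le c\,2^{n/\gamma}$ uniformly, and use a maximal inequality (Ottaviani/Lévy-type, valid because $u\mapsto X^*(u)-X^*(2^{n-1})$ has independent-enough increments, or simply monotone-in-modulus bounds via the finite-variation structure) to control $\sup_{t\in[2^{n-1},2^n]}|X^*(t)-X^*(2^{n-1})|$; combined with the dyadic convergence this yields the full $\lim_{t\to\infty}$ statement.

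For part (2), with $\gamma=2$ condition \eqref{eq:gamma-cond} gives $\Var X^*(t)<\infty$ and the stationary-increment-like bound $\E[(X^*(t)-X^*(s)-\E(\cdots))^2]\le c\,(t-s)$, and in fact $\Var X^*(t)\sim \sigma^2 t$ with $\sigma^2=\Var X^*(1)$ — this asymptotic linearity (the correlation of $X$ being integrable, equivalently $m_{-1}$-type integrability already assumed, so the integrated process has non-degenerate CLT scaling $\sqrt t$) is what puts us in the Hartman--Wintner regime. I would prove the LIL by the classical two-sided route: the upper bound $\limsup\le\sigma$ via a blocking argument over geometric scales $t_n=\rho^n$, Bernstein/Prokhorov-type exponential inequalities for the truncated Poisson integral (truncating jumps with $|z|/x>\eta\sqrt{t_n/\log\log t_n}$ and controlling the truncated remainder by Borel--Cantelli using \eqref{eq:gamma-cond} with $\gamma=2$), and a maximal inequality to fill gaps; the lower bound $\limsup\ge\sigma$ via independent blocks $X^*(\rho^{n+1})-X^*(\rho^n)$, a CLT for each block (available from the distributional limit theorem \eqref{eq:intro:limittheorem} in the $H=1/2$ case, or directly from Lindeberg applied to the ID increments) and the converse Borel--Cantelli. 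The main obstacle throughout — and the place I expect the real work to be — is the lack of genuine independence and of a martingale/càdlàg structure for the increments of $X^*$: one cannot directly invoke Kolmogorov's LIL or standard maximal inequalities, so everything hinges on quantifying how fast the dependence across a time-gap of order $\log t$ decays (the $e^{-x\cdot\text{gap}}$ factor) and showing the neglected ``memory'' term is almost surely negligible at scale $t^{1/\gamma}$ (resp.\ $\sqrt{t\log\log t}$); getting the exponential inequality in part (2) with the correct constant $\sigma=\sqrt{\Var X^*(1)}$, rather than some larger constant, will require carefully matching the truncation level to the block length.
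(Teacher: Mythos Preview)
Your plan differs substantially from the paper's proof, and the difference is not cosmetic: the paper's decomposition removes exactly the obstacle you identify (lack of genuine independence), and your proposed workaround for that obstacle has a real gap.

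\textbf{What the paper does.} For $s\in(0,t]$ the paper writes $f_t(x,s)=x^{-1}-x^{-1}e^{-x(t-s)}$ and, together with the $s\le 0$ part, obtains
\[
X^*(t)=X_-^*(t)+X_{+,1}^*(t)-X_{+,2}^*(t),
\]
where $X_{+,1}^*(t)=\int_{(0,\infty)\times(0,t]\times(0,\infty)}zx^{-1}\,\mu(\dd x,\dd s,\dd z)$ is a \emph{genuine subordinator}: its L\'evy measure $\eta_1$ satisfies $\overline\eta_1(r)=\int\overline\lambda(rx)\,\pi(\dd x)$, and \eqref{eq:gamma-cond} is exactly $\int_{(1,\infty)}r^\gamma\eta_1(\dd r)<\infty$. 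Hence the classical Marcinkiewicz--Zygmund SLLN (for $\gamma<2$) and the L\'evy-process LIL (for $\gamma=2$, giving the sharp constant $\sqrt{m_2(\lambda)m_{-2}(\pi)}=\sqrt{\Var X^*(1)}$) apply to $X_{+,1}^*$ verbatim. The remaining work is to show $X_-^*(t)=o(t^{1/\gamma})$ (easy: monotone limits and Markov) and $X_{+,2}^*(t)=o(t^{1/\gamma})$ (harder: a layer-cake decomposition of the Poisson points according to the size of $\zeta_k\xi_k^{-1}e^{-\xi_k(t-\tau_k)}$, with Poisson tail bounds on each layer). The extra hypothesis $m_{-1-\delta}(\pi)<\infty$ at $\gamma=1$ enters only in this last estimate.

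\textbf{Where your plan breaks.} Your proposed split into ``recent'' jumps $s\in[t-K\log t,\,t]$ and ``far past'' jumps does not isolate anything with independent increments, and the claim that the far-past piece is $o(t^{1/\gamma})$ via the decay $e^{-xK\log t}=t^{-xK}$ is wrong as stated. For $s\in(0,\,t-K\log t]$ the integrand is $x^{-1}(1-e^{-x(t-s)})$, which for small $x$ is close to $t-s$, not small; there is no factor $e^{-x(t-s)}$ multiplying it. In fact the (uncentered) far-past contribution contains essentially $\sum_{0<\tau_k\le t-K\log t}\zeta_k/\xi_k$, which grows like $t$, not $o(t^{1/\gamma})$. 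Even after centering, you are left with exactly the fluctuation of a subordinator over $[0,t-K\log t]$, so you have not gained anything over attacking $X^*(t)$ directly. The decay $e^{-x(t-s)}$ you have in mind lives in the \emph{difference} $f_t-f_{t'}$, i.e.\ in the cross-block dependence; exploiting it is precisely what the paper's $X_{+,2}^*$ term packages, and handling that term requires the pointwise Poisson analysis in Lemma~\ref{lemma:I2}, not a moment bound.

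\textbf{Consequence for part (2).} Because the paper's main term is a L\'evy process, the LIL with the exact constant comes from the standard L\'evy LIL, and $\Var X_{+,1}^*(1)=\Var X^*(1)$ once the other pieces are shown $o(\sqrt{t\log\log t})$. Your route through truncation plus Bernstein inequalities and a converse Borel--Cantelli on hand-built ``approximately independent'' blocks would, even if it could be made to work, have to recover the constant $\sqrt{\Var X^*(1)}$ from scratch; with the dependence not actually severed, the lower bound in particular looks out of reach by that method.
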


From Corollary \ref{cor:X*moments} below it follows that \eqref{eq:gamma-cond}
implies $\E |X^*(1)|^\gamma < \infty$, which is the moment assumption in the classical Marcinkiewicz--Zygmund theorem (see e.g.~\cite[Theorem 6.7.1]{Gut}).

For the $\int_{|z|\leq 1} |z| \lambda(\dd z)=\infty$ case, we introduce the following indices
\begin{equation} \label{eq:BG}
\begin{split}
& \beta_0 = 
\inf \left\{ \beta \geq  0: \, \int_{|z|\leq 1} |z|^\beta \lambda(\dd z) < \infty \right\}, \\
& \eta_\infty = 
\sup \left\{ \eta \geq  0: \, \int_{|z|>1} |z|^{\eta} \lambda(\dd z) < \infty \right\}, \\
& \alpha_0 = 
\sup \left\{ \alpha \geq 0: \, \int_{(0,1]} x^{-\alpha - 1} \pi (\dd x) < \infty \right\}.
\end{split}
\end{equation}
The constant $\beta_0$ is the Blumenthal--Getoor index of the L\'evy measure $\lambda$ and $\eta_\infty$ is the tail index of the L\'evy measure $\lambda$, which also corresponds to the tail index of the marginal distribution \cite[Proposition 3.1]{FasenKlupp}. 
Assumption $\int_{|z| \leq 1} |z| \lambda(\dd z) = \infty$ implies that 
$\beta_0 \geq 1$, while $\beta_0 \leq 2$, as $\lambda$ is a L\'evy measure.

In what follows, we put $\beta = \beta_0$ if $\int_{|z|\leq 1} z^{\beta_0} \lambda(\dd z) 
< \infty$, otherwise $\beta > \beta_0$ arbitrarily close. 
Similarly, $\eta \leq \eta_\infty$, $\alpha \leq \alpha_0$ with equality if 
the corresponding quantity is finite.

\begin{theorem}\label{thm:infv}
Assume that $\int_{|z|\leq 1} |z| \lambda(\dd z)=\infty$, $a=0$ and $b=0$.
\begin{enumerate}
\item If $\beta \leq 1 + \alpha$, assume that \eqref{eq:gamma-cond} holds for some $\gamma \in (0,2]$ and if $\gamma=1$ additionally $m_{-1-\delta} (\pi) < \infty$ for some $\delta>0$. Then 
\[
\limsup_{t \to \infty} \frac{
|X^*(t) - \bone(\gamma \geq 1) \E X^*(t)|}{t^{1/\gamma} \log t} \leq 1  \quad \text{a.s.}
\]
\item If $\beta \geq 1+\alpha$, 
assume that
\begin{equation} \label{eq:gamma-cond-2}
\dint_{(0,\infty) \times \R} \frac{|z|^\gamma}{x^\gamma} \bone ( |z| > x, |z| > 1) 
\pi(\dd x) \lambda(\dd z)   < \infty
\end{equation}
holds with $\gamma < 1/(1 - \alpha/\beta)$. Then
\[
\lim_{t \to \infty} \frac{X^*(t) - \bone(\gamma \geq 1) \E X^*(t)}{t^{1/\gamma}} = 0 \quad \text{a.s.}
\]
\end{enumerate}
\end{theorem}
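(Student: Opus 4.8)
The plan is to prove the a.s.\ statement along a geometric grid and then control the fluctuations inside each block. Fix $q>1$, put $t_n=q^n$, and write $R_t=X^*(t)-\bone(\gamma\ge1)\E X^*(t)$. It suffices to bound $\p\big(\sup_{t\in[t_n,t_{n+1}]}|R_t|>a_n\big)$, where $a_n=\varepsilon\,t_n^{1/\gamma}$ in case (2) and $a_n=(1+\varepsilon)t_n^{1/\gamma}\log t_n$ in case (1), to show these bounds are summable in $n$ (in case (1) only after grouping the indices into blocks $n\in[2^k,2^{k+1})$, since individually they are only $\log$-summable), and then to apply Borel--Cantelli together with $a_{n+1}/a_n\to q^{1/\gamma}$, finally letting $q\downarrow1$. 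Throughout we use the L\'evy--It\^o decomposition \eqref{eq:Lambda} with $a=b=0$ and the stochastic Fubini theorem of the Appendix, so that $R_t$ is the sum of a compensated small-jump integral and a large-jump integral against the Poisson measure $\mu$ of intensity $\nu=\pi\otimes\Leb\otimes\lambda$.

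The core estimate is on $\p(|R_{t_n}|>a_n/2)$. I decompose the jump contributions by comparing $|z|$ both to $x$ (the natural scale, since $0\le f_t(x,s)\le x^{-1}\wedge t$) and to a cutoff $K_n$, a power of $t_n$ to be optimized, writing $R_{t_n}=A_n+B_n+C_n$. Here $A_n$ gathers the compensated small jumps (all $|z|\le 1$, as forced by the L\'evy--It\^o decomposition, together with the jumps $|z|\le x$), $B_n$ the intermediate jumps, and $C_n$ the rare large jumps with $|z|>x$ and $|z|>K_n$. The term $A_n$ is handled by an $L^p$ bound with $p$ slightly above the Blumenthal--Getoor index: by the moment estimates of Section~\ref{sec:pre} it is finite and grows like a power of $t_n$ strictly below $t_n^{p/\gamma}$, so Markov summability follows. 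The term $B_n$ is controlled by Corollary~\ref{cor:X*moments} and \eqref{eq:gamma-cond-2}, which bound $\E|B_n|^\gamma$ (the truncated variance or $L^1$ norm when $\gamma<1$); the strict inequality $\gamma<1/(1-\alpha/\beta)=\beta/(\beta-\alpha)$ leaves the margin that makes $\sum_n\p(|B_n|>a_n/6)$ finite once $K_n$ is fixed. The term $C_n$ uses no moments at all: the probability that $\mu$ charges $\{s\le t_n,\ |z|>x,\ |z|>K_n\}$ is at most its intensity, which by Tonelli is $\lesssim t_n\int_{(0,\infty)}\overline\lambda(x\vee K_n)\,\pi(\dd x)$ and hence summable for a suitable power $K_n$; here the tail index $\eta_\infty$ of $\lambda$ and $m_{-1}(\pi)<\infty$ enter. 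The extra hypothesis $m_{-1-\delta}(\pi)<\infty$ at $\gamma=1$ is used precisely in the bound for $B_n$, absorbing the logarithmic effect of the centering.

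The dichotomy at $\beta=1+\alpha$ reflects which of the three terms sets the pace. When $\beta\ge1+\alpha$ the terms $A_n$ and $B_n$ are both summable with room to spare whenever $\gamma<1/(1-\alpha/\beta)$, and $C_n$ is genuinely negligible, which gives the clean $o(t^{1/\gamma})$; the exponent $1/(1-\alpha/\beta)$ is exactly the value at which the $\gamma$-th moment of the rescaled intermediate-jump part, equivalently the Borel--Cantelli sum for $B_n$, ceases to converge. When $\beta\le1+\alpha$, the boundary jumps with $|z|\approx x$ and $x$ near the origin are too heavy to be removed by any admissible $K_n$; optimizing the split then costs a factor $\log t$ and yields only a $\limsup$, the constant $1$ coming from the sharp first-moment-type bound on the leftover contribution.

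The main obstacle is the reduction from the grid $\{t_n\}$ to all $t$, i.e.\ bounding $\sup_{t\in[t_n,t_{n+1}]}|R_t-R_{t_n}|$, since $X^*$ is neither a martingale nor has independent increments and is long-range dependent. I would split this supremum along the same three pieces. For $C$ and the $|z|>1$ part of $B$ there is nothing to do: for fixed $(x,s)$ the map $t\mapsto f_t(x,s)$ is non-decreasing, so after separating positive and negative jumps each such piece is monotone in $t$ and its block supremum is attained at $t_{n+1}$. For the compensated infinite-activity part $A$ one genuinely needs a maximal inequality; $A_t$ is not a martingale in $t$, but one can still obtain $\E\sup_{t\in[t_n,t_{n+1}]}|A_t-A_{t_n}|^p\lesssim\E|A_{t_{n+1}}-A_{t_n}|^p$, up to a $q$-dependent constant, either from a Doob/Lenglart-type argument after expressing the increment through the underlying compensated Poisson measure, or from a Kolmogorov--Chentsov chaining bound based on $\E|A_t-A_{t'}|^p\lesssim|t-t'|^\kappa$; the moment estimates of Section~\ref{sec:pre} then close the gap. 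Coordinating all of this — in particular choosing $K_n$ large enough for the rare-jump bound yet small enough for the $A$- and $B$-moment bounds, the coupling that produces the precise exponent in case (2) and the logarithm in case (1) — is the remaining delicate point.
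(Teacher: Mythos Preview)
Your overall architecture (truncate at a $t$-dependent level, Borel--Cantelli along a grid, then control fluctuations via a Kolmogorov--Chentsov bound) matches the paper's, and your fluctuation step is essentially Lemma~\ref{lemma:X+diffbound} combined with Lemma~\ref{lemma:aux-conv}. But the handling of the compensated infinite-activity part $A_n$ has a genuine gap. You propose to control it by an $L^p$ bound with $p$ just above $\beta$, citing Section~\ref{sec:pre}; however, that section only gives existence of moments of $X^*(t)$, not growth rates for truncated pieces, and a polynomial Markov bound $\p(|A_n|>a_n)\lesssim t_n^{r-p/\gamma}$ fails to be summable precisely at the critical exponent (e.g.\ $\gamma=1+\alpha$ when $\alpha<1$, or $\gamma=2$). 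The paper avoids this by using the moment generating function: after truncating at $t^\kappa$ the MGF is finite, and choosing $\theta=4t^{-\kappa}$ in $\p(X>u)\le e^{-\theta u}\,\E e^{\theta X}$ forces $u=(1/4+\varepsilon)t^\kappa\log t$ to obtain a summable bound $t^{-(1+4\varepsilon)}$ (Lemma~\ref{lemma:X<momgenfunc} and Corollary~\ref{cor:gamma>1}). This is the actual source of the $\log t$ and of the constant~$1$; your explanation that it arises from ``optimizing the split'' or a ``first-moment-type bound on the leftover'' is not the mechanism, and the remark about grouping indices into blocks $[2^k,2^{k+1})$ because bounds are ``only $\log$-summable'' does not salvage a non-summable series.

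There are also structural differences worth noting. The paper does not truncate by $|z|$ against a level $K_n$; it first splits off $|z|>1$ entirely and feeds that piece to Theorem~\ref{thm:fv} (this is where condition~\eqref{eq:gamma-cond-2} enters), and for $|z|\le 1$ it splits by the sign of $s$ into $X^*_+$ and $X^*_-$, then truncates by the value of the \emph{integrand} $\tfrac{z}{x}(1-e^{-x(t-s)})$ against $t^\kappa$ (so the cutoff involves $x$, $s$ and $z$ jointly). The ``no large integrand'' event is then shown to hold eventually either via \eqref{eq:gamma-cond} (Lemma~\ref{lemma:X>-0}) or, in case~(2), via the intensity bound $\nu(A(t;\kappa)^c)\le c\,t^{\beta-\alpha-\kappa\beta}$ (Lemmas~\ref{lemma:compX>} and~\ref{lemma:X>betanagy}); this is where the exponent $1-\alpha/\beta$ appears naturally, not through a moment computation for your $B_n$. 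Your scheme may be repairable, but as written the $A_n$ step needs the MGF argument (or an equivalent subexponential tail bound), and the truncation should be by the integrand rather than by $|z|$ alone.
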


To understand the assumptions, it is worth discussing how \eqref{eq:gamma-cond} is related to indices $\alpha$, $\beta$ and $\eta$. If \eqref{eq:gamma-cond} holds, then
\begin{equation*}
\int_{(0,1]} x^{-\gamma} \int_{|z|>1} |z|^\gamma \lambda(\dd z) \pi (\dd x) < \infty,
\end{equation*}
which implies that $\gamma \leq 1+\alpha$ and $\gamma \leq \eta$. Furthermore, \eqref{eq:gamma-cond} with any $\gamma > 0$ implies that 
\eqref{eq:gamma-cond} holds with $\gamma = 0$, that is 
\begin{equation} \label{eq:gamma-0}
\int_{(0,\infty)} \lambda(\{ z: |z| > x \}) \pi(\dd x) = 
\int_{\R} \pi((0,|z|)) \lambda(\dd z) < \infty.
\end{equation}
Therefore, if $\lambda(\{ z: |z| > x \}) = h(x) x^{-\beta_0}$ 
or $\pi((0,z)) = h(z) z^{1 + \alpha_0}$ around 0, where $h$ is a function 
such that $\lim_{x \downarrow 0} h(x) x^\varepsilon = 0$ and 
$\lim_{x \downarrow 0} h(x) x^{-\varepsilon} = \infty$ for any $\varepsilon > 0$,
then \eqref{eq:gamma-0} implies $\beta \leq 1 + \alpha$. 
In particular, this holds if $h$ is slowly varying at 0, 
i.e.~$\lim_{x \downarrow 0} \tfrac{h(xy)}{h(x)} = 1$ for each $y > 0$ (see e.g.~\cite{BGT}). 
On the other hand, it is possible 
to construct rather exotic measures $\pi$ and $\lambda$ such that \eqref{eq:gamma-cond} holds but $\beta > 1 + \alpha$ (see the example in the Appendix). Further connections between \eqref{eq:gamma-cond} and indices $\alpha$, $\beta$ and $\eta$ can be seen from the proof of Corollary \ref{cor:lthm} below.

The following corollary summarizes the main results in terms of the bounds for the \textit{rate of growth} of the (centered) integrated process. For simplicity we assume that $\pi$ is a finite measure. The generalization to infinite measures resulting in a few more cases, is straightforward.

\begin{corollary}\label{cor:lthm}
Assume that $\pi$ is a finite measure. For the rate of growth of the integrated process 
\begin{equation*}
R(t)=\frac{\log |X^*(t)- \bone(\E X^*(t) < \infty) \E X^*(t)|}{\log t},
\end{equation*}
it holds a.s.~that
\begin{equation*}
\limsup_{t\to \infty} R(t) \leq \begin{cases}
\frac{1}{2}, & \text{if } \alpha \geq 1  \text{ and } \eta \geq 2,\\
\frac{1}{\eta}, & \text{if } \alpha \geq 1, \eta < 2, 
\text{ or } \alpha <1, \eta \leq 1+\alpha, \beta \leq 1 + \alpha \\
\frac{1}{1+\alpha}, & \text{if } \alpha <1, \eta > 1+\alpha
\text{ and } \beta \leq 1+\alpha,\\
1 - \frac{\alpha}{\beta}, & \text{if } \alpha <1, \eta >1+\alpha, 
\text{ and } 
\beta > 1+\alpha.\\
\end{cases}
\end{equation*} 
\end{corollary}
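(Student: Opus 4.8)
The plan is to derive Corollary \ref{cor:lthm} as a direct consequence of Theorems \ref{thm:fv} and \ref{thm:infv}, by choosing in each regime the sharpest exponent $\gamma$ permitted by the hypotheses and then reading off the resulting a.s.\ rate. Since we assume $\pi$ finite, the condition $m_{-1-\delta}(\pi) < \infty$ needed in the borderline case $\gamma = 1$ is automatic, so we may ignore that technicality throughout. The first step is to translate \eqref{eq:gamma-cond} into a statement purely about the indices $\alpha$, $\beta$, $\eta$ under the finiteness of $\pi$. When $\pi$ is finite, the inner integral over $x$ on any bounded set is harmless, so \eqref{eq:gamma-cond} reduces, up to the behavior of $\pi$ near $0$, to the two requirements that $\int_{|z|>1}|z|^\gamma\lambda(\dd z)<\infty$ (i.e.\ $\gamma \le \eta$) and $\int_{(0,1]} x^{-\gamma}\,\overline{\lambda}(x)\,\pi(\dd x)<\infty$; using $\overline{\lambda}(x)\asymp x^{-\beta}$ near $0$ the latter becomes $\int_{(0,1]}x^{-\gamma-\beta}\pi(\dd x)<\infty$, which by the definition of $\alpha_0$ holds precisely when $\gamma + \beta \le 1 + \alpha$, i.e.\ $\gamma \le 1+\alpha-\beta$. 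Thus, modulo slowly varying corrections, \eqref{eq:gamma-cond} is feasible for $\gamma \le \min(\eta,\,1+\alpha-\beta)$ when $\beta\le 1+\alpha$; and \eqref{eq:gamma-cond-2} is feasible for $\gamma \le \eta$ regardless of $\beta$.

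Next I would split into the four regimes of the statement. \textbf{Case $\alpha \ge 1$, $\eta \ge 2$:} here $1 + \alpha - \beta \ge 2 - \beta \ge 0$ since $\beta\le 2$, and in fact one can take $\gamma = 2$ in \eqref{eq:gamma-cond}; then either Theorem \ref{thm:fv}(2) (if $\int_{|z|\le1}|z|\lambda(\dd z)<\infty$) or Theorem \ref{thm:infv}(1) with $\gamma = 2$ (note $\beta\le 2 = 1+1 \le 1+\alpha$) applies, giving $X^*(t)-\E X^*(t) = O(\sqrt{t\log\log t})$ or $O(t^{1/2}\log t)$ a.s., so $\limsup R(t) \le 1/2$. \textbf{Case $\alpha \ge 1$, $\eta < 2$:} now $\eta \le 1+\alpha$ forces the binding constraint to be $\gamma \le \eta$; taking $\gamma$ slightly below $\eta$ (or equal to $\eta$ if the integral converges) in \eqref{eq:gamma-cond} and invoking Theorem \ref{thm:fv}(1) or Theorem \ref{thm:infv}(1) yields $X^* - \bone(\cdot)\E X^* = o(t^{1/\eta})$ or $O(t^{1/\eta}\log t)$, hence $\limsup R(t)\le 1/\eta$. \textbf{Case $\alpha < 1$, $\eta \le 1+\alpha$, $\beta \le 1+\alpha$:} same as before, the constraint $\gamma\le\eta$ binds (since $1+\alpha-\beta$ could be smaller, but $\eta \le 1+\alpha$ and one checks $\eta$ is still the operative exponent, or more carefully one uses whichever of $\eta$, $1+\alpha-\beta$ is larger but both give exponent governed by $1/\eta$ as stated — this needs a short verification), and Theorem \ref{thm:infv}(1) gives $\limsup R(t)\le 1/\eta$. \textbf{Case $\alpha<1$, $\eta>1+\alpha$, $\beta\le1+\alpha$:} now $\gamma$ can approach $1+\alpha$ from below — indeed when $\beta\le 1+\alpha$ one only needs \eqref{eq:gamma-cond} with $\gamma$ up to $\min(\eta, 1+\alpha) = 1+\alpha$ using that the $\pi$-integral near zero converges for $\gamma+1 \le 1+\alpha$, wait — for $\gamma < 1+\alpha$ and $\beta \le 1+\alpha$; Theorem \ref{thm:infv}(1) then yields $X^* - \E X^* = O(t^{1/\gamma}\log t)$ for every $\gamma < 1+\alpha$, so $\limsup R(t)\le 1/(1+\alpha)$. \textbf{Case $\alpha<1$, $\eta>1+\alpha$, $\beta>1+\alpha$:} here $\beta\ge 1+\alpha$, so Theorem \ref{thm:infv}(2) applies; \eqref{eq:gamma-cond-2} with $\gamma<\eta$ holds (the $|z|>1$ restriction removes the near-zero $\lambda$ issue, leaving only $\gamma\le\eta$, which is satisfied since $\eta>1+\alpha>1/(1-\alpha/\beta)$ — one checks $1+\alpha \ge 1/(1-\alpha/\beta)$ is equivalent to $(1+\alpha)(1-\alpha/\beta)\ge1$, i.e.\ $\alpha - \alpha(1+\alpha)/\beta \ge 0$, i.e.\ $\beta \ge 1+\alpha$, true), so we may take $\gamma$ up to $1/(1-\alpha/\beta) = \beta/(\beta-\alpha)$, giving $X^*-\E X^* = o(t^{1/\gamma})$ for every $\gamma < \beta/(\beta-\alpha)$, hence $\limsup R(t) \le 1-\alpha/\beta$.

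The last step is bookkeeping about the centering indicator: $\bone(\E X^*(t)<\infty)$ in $R(t)$ matches $\bone(\gamma\ge1)$ in the theorems because $\E|X^*(1)|<\infty$ iff one can take $\gamma=1$ (via Corollary \ref{cor:X*moments}), and in each regime above the operative $\gamma$ is $\ge 1$ exactly when the mean is finite; a brief remark handles the edge where $\gamma<1$ (no centering, and $\log|X^*(t)|/\log t$ still has the stated $\limsup$ since $t^{1/\gamma} = o(t^{1/\gamma'})$ harmlessly).

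I expect the main obstacle to be the third regime, $\alpha<1,\ \eta\le1+\alpha,\ \beta\le1+\alpha$: one must confirm that among the admissible exponents the bound $1/\eta$ is genuinely the best the theorems deliver, i.e.\ that $\eta \le 1+\alpha-\beta$ need \emph{not} hold and that when $1+\alpha-\beta > \eta$ the larger feasible $\gamma$ is still capped at $\eta$ by the $|z|>1$ moment requirement $\gamma\le\eta$ baked into \eqref{eq:gamma-cond}. A secondary nuisance is the slowly varying factor $h$: strictly, $\overline\lambda(x)\asymp x^{-\beta}$ only up to such a factor, so the convergence of $\int_{(0,1]}x^{-\gamma-\beta}h(x)\pi(\dd x)$ at the borderline $\gamma+\beta=1+\alpha$ is delicate, but since the corollary only claims $\limsup R(t)\le$ (rate) and we are free to take $\gamma$ strictly inside the feasible range, all borderline cases are avoided and the slowly varying corrections are absorbed harmlessly. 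I would state this explicitly to keep the argument clean.
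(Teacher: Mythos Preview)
Your overall strategy --- verify \eqref{eq:gamma-cond} (or \eqref{eq:gamma-cond-2}) in each regime and invoke Theorems \ref{thm:fv}/\ref{thm:infv} --- is exactly the paper's approach. The gap is in your ``translation'' step. You claim that, for $\pi$ finite, \eqref{eq:gamma-cond} reduces to $\gamma\le\eta$ together with $\int_{(0,1]} x^{-\gamma}\,\overline\lambda(x)\,\pi(\dd x)<\infty$, and hence to $\gamma\le 1+\alpha-\beta$. This is wrong: the near-zero part of \eqref{eq:gamma-cond} is
\[
J_1=\int_{(0,1]} x^{-\gamma}\Big(\int_{|z|\in(x,1]} |z|^{\gamma}\,\lambda(\dd z)\Big)\pi(\dd x),
\]
and the inner integral carries the weight $|z|^\gamma$, not $1$. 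If $\gamma\ge\beta$ the inner integral is bounded and $J_1<\infty$ simply requires $\gamma\le 1+\alpha$; if $\gamma<\beta$ one has $\int_{(x,1]}|z|^\gamma\lambda(\dd z)\le C x^{\gamma-\beta}$, so $J_1\le C\int_{(0,1]}x^{-\beta}\pi(\dd x)$, finite whenever $\beta\le 1+\alpha$ --- independently of $\gamma$. Thus, under $\beta\le 1+\alpha$, the feasible range is $\gamma\le\min(\eta,1+\alpha)$, not $\gamma\le\min(\eta,1+\alpha-\beta)$. The paper obtains this via the clean decomposition $J_1+J_2+J_3$ with $J_2=\int_{(0,1]}x^{-\gamma}\pi(\dd x)\int_{|z|>1}|z|^\gamma\lambda(\dd z)$ and $J_3$ the $x>1$ piece.

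Your mistranslation is not harmless. In the third regime ($\alpha<1$, $\eta\le 1+\alpha$, $\beta\le 1+\alpha$) your bound would give $\gamma\le\min(\eta,1+\alpha-\beta)$, and for, say, $\alpha=0.6$, $\beta=1.5$, $\eta=1$ this yields $\gamma\le 0.1$, hence only $\limsup R(t)\le 10$ rather than the claimed $1/\eta=1$. You half-notice the problem (the parenthetical ``wait --- for $\gamma<1+\alpha$ and $\beta\le 1+\alpha$'' in the fourth regime, and ``this needs a short verification'' in the third), but the argument as written is internally inconsistent: the first regime also relies on taking $\gamma=2$, which your constraint $\gamma\le 1+\alpha-\beta$ does not allow in general. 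Fixing the translation to the correct $\gamma\le\min(\eta,1+\alpha)$ makes all four $\beta\le 1+\alpha$ cases go through immediately; the fifth case ($\beta>1+\alpha$) and your verification that $1/(1-\alpha/\beta)\le 1+\alpha\le\eta$ there are fine.
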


\begin{proof}
Write 
\[
\begin{split}
& \dint_{(0,\infty)\times \R} \frac{|z|^\gamma}{x^\gamma} \bone ( |z| > x) \pi(\dd x) 
\lambda(\dd z) \\
& = 
\int_{(0,1]} x^{-\gamma} \pi (\dd x) \left( \int_{|z| \in (x,1]} |z|^\gamma \lambda(\dd z) 
+ \int_{|z| > 1} |z|^\gamma \lambda(\dd z) \right) \\
&  \quad + 
\int_{(1,\infty)} x^{-\gamma} \pi (\dd x) \int_{|z| > x} |z|^\gamma \lambda(\dd z) \\
& =: J_1  + J_2 + J_3.
\end{split}
\]
Then 
$J_1 < \infty$ if $\gamma \leq 1 + \alpha$ and $\gamma \geq \beta$,
$J_2 < \infty$ if 
$\gamma \leq \eta \wedge (1 + \alpha)$, and 
$J_3 < \infty$ for $\gamma \leq \eta$ as $\pi$ is assumed to be a finite measure. We now consider each case separately.

($\alpha \geq 1$ and $\eta \geq 2$): Then $\beta \leq 1 + \alpha$ necessarily 
holds, and \eqref{eq:gamma-cond} holds with any $\gamma < 2$. Thus the claim follows 
from Theorem \ref{thm:fv} or \ref{thm:infv} part 1.

($\alpha \geq 1$, $\eta < 2$), or 
($\alpha < 1$, $\eta \leq 1 + \alpha$, $\beta \leq 1 + \alpha$): 
Then $J_2 < \infty$ for any $\gamma \leq \eta$,
and $J_1 < \infty$ for $\gamma \leq 1 + \alpha$ since $1 + \alpha \geq \beta$.
Finally, $J_3 < \infty$ for any $\gamma \leq \eta$.
Thus \eqref{eq:gamma-cond} holds with $\gamma \leq \eta$, and the claim follows 
again as above.

($\alpha < 1$, $\eta \geq 1 + \alpha$, $\beta \leq 1 + \alpha$):
Then $J_1 < \infty$ since $\gamma \leq + \alpha$ and $\beta \leq 1 + \alpha$,
while $J_2 < \infty$ and $J_3 < \infty$ since $\gamma \leq 1 + \alpha \leq \eta$.
Thus \eqref{eq:gamma-cond} holds with $\gamma \leq 1 + \alpha$, and the claim follows.

($\alpha < 1$, $\eta \geq 1 + \alpha$, $\beta \geq 1 + \alpha$):
Now we use Theorem \ref{thm:infv} part 2, and we have to check \eqref{eq:gamma-cond-2}.
In this case $J_1 = 0$, $J_2 < \infty$ for $\gamma \leq (1 - \tfrac{\alpha}{\beta})^{-1} 
\leq 1 + \alpha \leq \eta$, while 
$J_3 < \infty$ for $\gamma \leq \eta$.
\end{proof}

To show that the bounds obtained are optimal, we review details of the limit theorems in \eqref{eq:intro:limittheorem}. Suppose that $\pi$ has a density $p$ which is regularly varying at zero, that is
\begin{equation*}
p (x) \sim \alpha_0 \ell(x^{-1}) x^{\alpha_0}, \quad \text{ as } x \to 0,
\end{equation*}
for some $\alpha_0>0$ and $\ell$ slowly varying at infinity. If the integrated process has finite variance, then we have \eqref{eq:intro:limittheorem} if one of the following holds:
\begin{enumerate}
\item $\alpha_0 >1$, in which case $H=1/2$ and $Z$ is Brownian motion,
\item $b=0$, $\alpha_0 \in (0,1)$ and $\int_{|z|\leq 1} |z|^{1+\alpha_0}\lambda(\dd z)<\infty$, in which case $H=1/(1+\alpha_0)$ and $Z$ is $(1+\alpha_0)$-stable L\'evy process,
\item $b=0$, $\alpha_0 \in (0,1)$ and $\lambda \left( \{ x\in \R : |x|>z\} \right) \sim c z^{-\beta_0}$ as $z\to 0$ with $1+\alpha_0<\beta_0<2$, in which case $H=1-\alpha_0/\beta_0$ and $Z$ is $\beta_0$-stable process with dependent increments,
\item $b>0$ and $\alpha_0 \in (0,1)$, in which case $H=1-\alpha_0/2$ and $Z$ is fractional Brownian motion.
\end{enumerate}
For the infinite variance case, the type of the limit depends additionally on the regular variation index of the marginal distribution \cite{GLT2020multifaceted}.

By comparing with the bounds from Theorem \ref{thm:fv} and \ref{thm:infv}, one can see that the bounds are optimal, up to a slowly varying terms. We can conclude that the almost sure growth is of the same order as in the weak limits even though from \eqref{eq:intro:ldpstatement} one would expect differently.

Finally, for completeness we consider the purely Gaussian case, that is 
when $a=0$, $b>0$, $\lambda \equiv 0$. In this case, the following law of 
iterated logarithm follows from general results on Gaussian processes combined with the asymptotic analysis of variance. The proof is given in Section \ref{sec:proof-Gaussian}.

\begin{theorem}\label{thm:GaussianLIL}
Assume that $a=0$, $b>0$ and $\mu\equiv 0$, 
\begin{enumerate}
\item If $m_{-2}(\pi)<\infty$, then
\[
\limsup_{t\to \infty} \frac{|X^*(t)|}{\sqrt{2 b m_{-2}(\pi) t \log \log t} } = 1 \quad \text{a.s.}
\]
\item If $\pi$ has a density $p$ such that $p(x)\sim \alpha \ell(x^{-1}) x^{\alpha}$ for some $\alpha \in (0,1)$ and $\ell$ slowly varying at infinity, then
\[
\limsup_{t\to \infty} \frac{|X^*(t)|}{\widetilde{\sigma} \ell(t)^{\frac{1}{2}} 
t^{1-\frac{\alpha}{2}} \sqrt{2 \log \log t} } = 1 \quad \text{a.s.}
\]
where $\widetilde{\sigma}^2 = b \frac{\Gamma(1+\alpha)}{(2-\alpha)(1-\alpha)}$.
\end{enumerate}
\end{theorem}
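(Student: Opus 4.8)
The plan is to treat this as an essentially Gaussian computation: once $\lambda \equiv 0$, the process $X^*$ is a centered Gaussian process, so a law of the iterated logarithm should follow from classical LIL results for Gaussian processes (of the type in \cite{orey1972,watanabe1970}) provided one understands the variance $\sigma^2(t) := \Var X^*(t)$ and the covariance structure well enough. The first step is therefore to compute $\sigma^2(t)$ explicitly. Writing $X^*(t) = \dint_{(0,\infty)\times\R} f_t(x,s)\, \Lambda^G(\dd x,\dd s)$ with the kernel $f_t$ from Section~\ref{sec:main}, the isometry for the Gaussian random measure gives
\[
\sigma^2(t) = b \int_{(0,\infty)} \int_{\R} f_t(x,s)^2 \, \dd s \, \pi(\dd x).
\]
A direct evaluation of the inner integral $\int_\R f_t(x,s)^2 \dd s$, splitting into $s \le 0$ and $0 < s \le t$, yields a closed form of the shape $x^{-3}\bigl(x t - \tfrac{3}{2} + 2e^{-xt} - \tfrac12 e^{-2xt}\bigr)$ (up to an arithmetic check I will not grind through here), so that
\[
\sigma^2(t) = b \int_{(0,\infty)} \frac{xt - \tfrac32 + 2e^{-xt} - \tfrac12 e^{-2xt}}{x^3}\, \pi(\dd x).
\]

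Next I would extract the asymptotics of $\sigma^2(t)$ as $t \to \infty$ in the two regimes. In case~(1), when $m_{-2}(\pi) < \infty$, the numerator is $\sim xt$ for large $t$ uniformly enough (and is $O(x^2 t^2 \wedge x^3 t^3)$ near $x=0$, which is integrable against $x^{-3}\pi(\dd x)$ once $m_{-2}(\pi)<\infty$), so dominated convergence gives $\sigma^2(t) \sim b\, m_{-2}(\pi)\, t$; this is the linear-variance Gaussian regime and the constant $2 b m_{-2}(\pi)$ in the statement is exactly the LIL normalization $\sqrt{2\sigma^2(t)\log\log t}$ with $\sigma^2(t)/t \to b m_{-2}(\pi)$. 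In case~(2), with $p(x) \sim \alpha \ell(x^{-1}) x^\alpha$ and $\alpha \in (0,1)$, the integral is dominated by the region $x \asymp 1/t$; substituting $u = xt$ and applying Karamata's theorem (see \cite{BGT}) gives
\[
\sigma^2(t) \sim b\, \ell(t)\, t^{2-\alpha} \int_0^\infty \frac{u\alpha - \tfrac32 + 2e^{-u} - \tfrac12 e^{-2u}}{u^3}\, \alpha u^\alpha\, \frac{\dd u}{u} \cdot (\text{const}),
\]
and the resulting constant integral evaluates to $\Gamma(1+\alpha)/\bigl((2-\alpha)(1-\alpha)\bigr)$ after integration by parts, matching $\widetilde\sigma^2$. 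The covariance $r(s,t) = \E[X^*(s)X^*(t)]$ is computed by the same isometry, and one checks the increment-variance / regularity conditions (e.g. $\E(X^*(t) - X^*(s))^2$ behaves like a regularly varying function of $t-s$ near the diagonal and $\sigma^2(t)$ is regularly varying with index in $(1,2]$), which are precisely the hypotheses under which the general Gaussian LIL of \cite{orey1972,watanabe1970} applies and yields $\limsup_{t\to\infty} |X^*(t)|/\sqrt{2\sigma^2(t)\log\log t} = 1$ a.s.

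Substituting the two variance asymptotics into this normalization gives the two displayed statements. The main obstacle, I expect, is not the LIL machinery itself but verifying the technical hypotheses of the Gaussian LIL theorem in the long-range-dependent regime of case~(2): there $\sigma^2(t)$ is regularly varying of index $2-\alpha > 1$, the process is far from having stationary or independent increments, and one must confirm that the covariance $r(s,t)$ has the right local and global behavior (in particular that $X^*$, after normalization by $\sigma(t)$, has the same fine oscillation structure as a self-similar Gaussian process of index $1-\alpha/2$). This likely requires citing or adapting a version of the LIL for Gaussian processes with regularly varying variance rather than the increment-stationary version, and checking a Kolmogorov-type continuity/entropy estimate for the increments uniformly in the relevant ranges. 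Modulo that, the remaining work is the two Karamata-type asymptotic computations, which are routine.
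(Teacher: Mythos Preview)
Your approach is essentially the paper's: recognise $X^*$ as centered Gaussian, compute $Q(t)=\tfrac12\Var X^*(t)$, extract its asymptotics in the two regimes, and invoke Orey's Gaussian LIL \cite{orey1972}. One correction and one simplification: the numerator in your closed form should be $xt-1+e^{-xt}$ (equivalently $Q(t)=\tfrac{b}{2}\int_{(0,\infty)} x^{-2}\int_0^t(1-e^{-xu})\,\dd u\,\pi(\dd x)$), and your ``main obstacle'' largely evaporates once you notice that $X$ is stationary, so $X^*(t)=\int_0^t X(u)\,\dd u$ \emph{does} have stationary increments---hence Orey's Theorem~1.1 applies with hypotheses purely on $Q$, which in case~(2) the paper dispatches via Potter's bounds $(t/s)^{2-\alpha-\delta}Q(s)\le Q(t)\le (t/s)^{2-\alpha+\delta}Q(s)$ together with the crude upper bound $Q(t)\le \tfrac{b}{4}m_{-1}(\pi)t^2$, with no covariance or entropy analysis required.
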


\section{Preliminaries}\label{sec:pre}

As the positive and negative jumps of the Poisson random measure $\mu$ from \eqref{eq:Lambda} are independent, 
we will show in the proofs of Theorems \ref{thm:fv} and \ref{thm:infv} that it is enough to consider 
supOU processes for which this random measure has only positive jumps, 
i.e.~$\lambda((-\infty,0))=0$. Since we consider the Gaussian case separately, we assume 
that there is no Gaussian component.

The following statement is Theorem 4.1 in \cite{Barn}.
We prove it again here because of the explicit form of
the L\'evy measure $\eta^*_t$.

\begin{lemma}[{\cite[Theorem 4.1]{Barn}}] \label{lemma:X*-chf}
The random variable $X^*(t)$
is infinitely divisible with characteristic function
\[
\E e^{\ii \theta X^*(t)} =
\exp \left\{ \ii A \theta +
\int_{(0,\infty)} (e^{\ii \theta y} - 1 - \ii \theta y \bone(y \leq 1))
\eta_t^*(\dd y) \right\},
\]
where $A \in \R$ is an explicit constant and 
\[
\eta_t^*(B) = \nu \left( \{ (x,s,z): f_t(x,s) z \in B  \} \right).
\]
Furthermore, 
\[
\overline \eta_t^*(r) = \eta_t^*((r,\infty)) =
\int_{(0,\infty)} x^{-1} \pi (\dd x) \int_{\frac{x r}{1- e^{-x t}}}^\infty 
 \overline \lambda(z) \frac{1}{z - x r} \dd z.
\]
\end{lemma}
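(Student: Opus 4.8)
The plan is to read the characteristic function off the L\'evy--It\^{o} decomposition \eqref{eq:Lambda} via the stochastic Fubini theorem, and then to obtain the explicit tail of the L\'evy measure by a direct analysis of the super-level sets of the kernel $f_t$.

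\emph{Infinite divisibility and the form of $\eta_t^*$.} As noted at the start of this section, it suffices to treat $\lambda((-\infty,0))=0$, so that $f_t(x,s)z\geq 0$, and there is no Gaussian component by the standing assumption. Applying the stochastic Fubini theorem (justified in \cite[Theorem 3.1]{bnbasse2011} and \cite[Lemma 4.1]{GLT21}) to \eqref{eq:Lambda}, $X^*(t)$ equals the deterministic term $a\, t\, m_{-1}(\pi)$ plus the (compensated) integral of $(x,s,z)\mapsto z f_t(x,s)$ against the Poisson random measure $\mu$ with intensity $\nu(\dd x,\dd s,\dd z)=\pi(\dd x)\,\dd s\,\lambda(\dd z)$. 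By the L\'evy--Khintchine (Campbell) formula for $\mu$ --- equivalently, by the general theory of integrals with respect to infinitely divisible independently scattered random measures --- $X^*(t)$ is infinitely divisible, its L\'evy measure is the image of $\nu$ under $(x,s,z)\mapsto z f_t(x,s)$, i.e.\ $\eta_t^*(B)=\nu(\{(x,s,z):z f_t(x,s)\in B\})$, and the drift constant $A$ is obtained by collecting the deterministic term together with the truncation corrections. The integrability $\int_{(0,\infty)}(1\wedge y^2)\,\eta_t^*(\dd y)=\int_{(0,\infty)\times\R\times\R}(1\wedge(z f_t(x,s))^2)\,\nu(\dd x,\dd s,\dd z)<\infty$, which both justifies the integral and guarantees that $\eta_t^*$ is a L\'evy measure, follows from the standing assumptions on $\pi$ and $\lambda$ and the boundedness and exponential decay of $f_t$.

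\emph{The tail $\overline\eta_t^*$.} It remains to compute $\overline\eta_t^*(r)=\nu(\{(x,s,z):z f_t(x,s)>r\})$. By Tonelli we integrate out $s$ first. For fixed $x>0$ the map $s\mapsto f_t(x,s)$ is unimodal: it increases on $(-\infty,0]$ from $0$ to its maximum $M_x:=x^{-1}(1-e^{-xt})$ at $s=0$ (the first branch of $f_t$), then decreases on $(0,t]$ back to $0$ (the second branch), and vanishes for $s>t$. Hence for $0<v<M_x$ the set $\{s:f_t(x,s)>v\}$ is the interval with endpoints $x^{-1}\log\frac{vx}{1-e^{-xt}}<0$ and $t+x^{-1}\log(1-vx)<t$, of Lebesgue measure $t+x^{-1}\log\frac{(1-vx)(1-e^{-xt})}{vx}$, while for $v\geq M_x$ it is empty. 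Taking $v=r/z$, this set is nonempty precisely when $z>z_0:=xr/(1-e^{-xt})$, and then has Lebesgue measure
\[
g_x(z):=t+x^{-1}\log\frac{(z-xr)(1-e^{-xt})}{xr}.
\]
Thus $\overline\eta_t^*(r)=\int_{(0,\infty)}\pi(\dd x)\int_{z_0}^\infty g_x(z)\,\lambda(\dd z)$. A direct computation gives $g_x(z_0)=0$ and $g_x'(z)=\frac{1}{x(z-xr)}$; integrating by parts in $z$ with $\lambda(\dd z)=-\dd\overline\lambda(z)$, the endpoint term at $z_0$ vanishes since $g_x(z_0)=0$, and the term at infinity vanishes because $g_x(z)$ grows only logarithmically while $\overline\lambda(z)\log z\to 0$, which follows from $\int_{|z|>1}\log|z|\,\lambda(\dd z)<\infty$. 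This gives $\int_{z_0}^\infty g_x(z)\,\lambda(\dd z)=\int_{z_0}^\infty\overline\lambda(z)\,g_x'(z)\,\dd z=\int_{z_0}^\infty\frac{\overline\lambda(z)}{x(z-xr)}\,\dd z$, and hence the stated formula for $\overline\eta_t^*(r)$ after integrating in $x$.

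\emph{Main obstacle.} The probabilistic part is routine once the stochastic Fubini theorem is available; the real work is the bookkeeping in the second step --- correctly assembling the super-level set from the two branches of $f_t$, identifying the threshold $z_0$ below which it is empty, and justifying the integration by parts, where the vanishing of the boundary term at infinity is exactly the point at which the hypothesis $\int_{|z|>1}\log|z|\,\lambda(\dd z)<\infty$ enters.
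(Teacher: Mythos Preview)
Your proof is correct and follows essentially the same route as the paper: both invoke the Rajput--Rosi\'nski theory (equivalently the L\'evy--It\^{o}/Campbell formula) to identify $\eta_t^*$ as the image measure, then compute the Lebesgue measure of the super-level set $\{s:f_t(x,s)>r/z\}$ and integrate by parts in $z$, using $\int_{|z|>1}\log|z|\,\lambda(\dd z)<\infty$ to kill the boundary term at infinity. The only cosmetic difference is that you exploit the unimodality of $s\mapsto f_t(x,s)$ to write the super-level set as a single interval, whereas the paper splits it into the two pieces $s\leq 0$ and $s>0$ and then combines the logarithms; the resulting expression $g_x(z)$ and the subsequent integration by parts are identical.
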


\begin{proof}
The first part follows from \cite[Theorem 2.7]{rajputrosinski1989}.
Note that for $r > 0$
\[
\begin{split}
& \{ (x,s,z): f_t(x,s) z > r \} \\
& = 
\left\{ (x, s, z) : z > \frac{x r}{1-e^{-x t}}, \ 
x^{-1} \log \frac{x r}{z (1-e^{-x t})} < s \leq 0 \right\} \\
& \quad \cup 
\left\{ (x, s, z) : z > \frac{x r}{1-e^{-x t}}, \ 
0 < s < t + x^{-1} \log 
\left( 1 - \frac{x r}{z} \right) \right\},
\end{split}
\]
thus 
\[
\begin{split}
& \overline \eta_t^*(r) \\
& = \int_{(0,\infty)} \pi (\dd x) \int_{(\frac{x r}{1- e^{-x t}}, \infty)} 
\lambda(\dd z) 
\frac{1}{x} \left( \log \frac{z (1-e^{-x t})}{x r}
+ tx + \log \left( 1 - \frac{x r}{z} \right)
\right) \\
& = \int_{(0,\infty)} x^{-1} \pi (\dd x) \int_{(\frac{x r}{1- e^{-x t}}, \infty)} 
\lambda(\dd z) 
\log \left( (e^{x t} -1 ) \left( \frac{z}{x r} -1 \right) 
\right) \\
& = \int_{(0,\infty)} x^{-1} \pi (\dd x) \int_{\frac{x r}{1- e^{-x t}}}^\infty 
\overline \lambda(z) \frac{1}{z - x r} \dd z,
\end{split}
\]
where the last equality follows after integration by parts combined with the fact 
that 
\[
\lim_{u \to \infty} \overline \lambda(u) \log (e^{xt}-1)(\tfrac{u}{xr} - 1) = 0.
\]
The latter limit holds since $\lim_{u \to \infty} \overline \lambda(u) \log u = 0$ 
by the integrability assumption $\int_{(1,\infty)} \log z \lambda(\dd z) < \infty$.
\end{proof}

Note that if $m_0(\pi) < \infty$, then $m_{-\beta}(\pi)< \infty$ for all
$\beta \in [0, 1]$.

\begin{corollary} \label{cor:X*moments}
Let $t > 0$ and $\beta > 0$. If $\E X^*(t)^\beta < \infty$, then $\int_{(1,\infty)} z^\beta \lambda(\dd z) < \infty$. On the other hand, if $\int_{(1,\infty)} z^\beta \lambda(\dd z) < \infty$, then $\E X^*(t)^\beta < \infty$ provided one of the following holds
\begin{enumerate}
\item $\beta\geq 1$,
\item $\beta \in (0,1)$ and $m_{-\beta}(\pi) < \infty$,
\item $\beta \in (0,1)$, $m_{-\beta}(\pi) = \infty$ and 
\end{enumerate}
\[
\int_{(1,\infty)} x^{-\beta} \pi(\dd x) \int_{x}^\infty \overline \lambda(z)
z^{\beta - 1} \dd z < \infty.
\]
\end{corollary}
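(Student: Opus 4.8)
**Proof plan for Corollary \ref{cor:X*moments}.**

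The plan is to exploit the explicit formula for the L\'evy measure $\eta_t^*$ from Lemma \ref{lemma:X*-chf}, since for an infinitely divisible random variable the finiteness of $\E |X^*(t)|^\beta$ is equivalent to $\int_{|y|>1} y^\beta \eta_t^*(\dd y) < \infty$ (a standard fact, see e.g.~\cite{Sato}). Thus the whole statement reduces to analyzing when
\[
\int_1^\infty y^{\beta - 1} \overline{\eta_t^*}(y) \, \dd y
= \int_1^\infty y^{\beta-1} \int_{(0,\infty)} x^{-1} \pi(\dd x)
\int_{\frac{xy}{1-e^{-xt}}}^\infty \overline\lambda(z) \frac{1}{z - xy}\,\dd z \, \dd y
\]
is finite, after using Fubini to pull the $y$-integral inside (all integrands are nonnegative, so Tonelli applies freely). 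For fixed $t$ the factor $1 - e^{-xt}$ is bounded between $cx \wedge 1$ and $1$, so it only affects small $x$; I would absorb it into constants depending on $t$ and treat the regimes $x \leq 1$ and $x > 1$ separately.

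\textbf{Necessity.} For the converse direction, I would bound $\overline{\eta_t^*}(r)$ from below by restricting the $z$-integral to $z \in (2xr, \infty)$ (say), where $1/(z - xr) \geq 1/z$, and restricting the $x$-integral to a fixed compact set bounded away from $0$ and $\infty$ where $\pi$ has positive mass (if $\pi \equiv 0$ there is nothing to prove). This yields $\overline{\eta_t^*}(r) \geq c \int_{cr}^\infty \overline\lambda(z) z^{-1}\,\dd z$ up to constants, and integrating $r^{\beta-1}$ against this and applying Fubini in $(r,z)$ gives a lower bound of order $\int_{(1,\infty)} z^\beta \lambda(\dd z)$. Hence $\E |X^*(t)|^\beta < \infty$ forces $\int_{(1,\infty)} z^\beta \lambda(\dd z) < \infty$.

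\textbf{Sufficiency.} Assuming $\int_{(1,\infty)} z^\beta \lambda(\dd z) < \infty$, I would split $\int_1^\infty y^{\beta-1}\overline{\eta_t^*}(y)\,\dd y$ according to $x \leq 1$ versus $x > 1$. On $x > 1$ the region of the inner integral is $z > xy/(1-e^{-xt}) \geq xy$, so after the substitution $z = xy w$ (or directly estimating $1/(z-xy) \leq C/z$ on $z > 2xy$ and handling the boundary layer $z \in (xy, 2xy)$ separately) one gets a contribution controlled by $\int_{(1,\infty)} x^{-\beta}\pi(\dd x)\int_x^\infty \overline\lambda(z) z^{\beta-1}\,\dd z$, which is exactly condition (3); and when $m_{-\beta}(\pi)<\infty$ (and a fortiori when $\beta \geq 1$, since then $m_{-\beta}(\pi) \leq m_{-1}(\pi) < \infty$) this is dominated by $m_{-\beta}(\pi)\int_{(1,\infty)} z^{\beta}\lambda(\dd z)<\infty$, which gives (1) and (2). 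On $x \leq 1$, $1 - e^{-xt} \asymp x$, so the lower limit of the $z$-integral is $\asymp y$, independent of $x$; the inner double integral over $(y,z)$ then contributes a term of order $\int_{(0,1]} x^{-1}\pi(\dd x) \cdot \int_{(1,\infty)} z^{\beta}\lambda(\dd z)$, and since $m_{-1}(\pi)<\infty$ by standing assumption this is finite. Collecting the two regimes gives finiteness of $\E |X^*(t)|^\beta$ in all three cases.

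\textbf{Main obstacle.} The delicate point is the behavior of $1/(z - xr)$ near the lower endpoint $z = xr/(1-e^{-xt})$, where the kernel blows up; one must check that this singularity is integrable and does not dominate. This is genuine but mild: near the endpoint write $z - xr = z(1 - e^{-xt}) + (1-e^{-xt})^{-1}e^{-xt}\cdot(\text{l.o.t.})$ — more simply, on the range $z \in (\tfrac{xr}{1-e^{-xt}}, \tfrac{2xr}{1-e^{-xt}})$ one has $\overline\lambda(z) \leq \overline\lambda(\tfrac{xr}{1-e^{-xt}})$ and $\int \frac{\dd z}{z - xr}$ over that range is bounded by $\log 2 + $ a term of order $\log\frac{1}{1-e^{-xt}}$, which for $x>1$ is $O(1)$ and for $x\leq 1$ is $O(\log(1/x))$, absorbable since $m_{-1-\delta}$-type slack is available from $m_{-1}(\pi)<\infty$ after a trivial $\log(1/x) \leq C_\varepsilon x^{-\varepsilon}$. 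Away from the endpoint, $1/(z-xr) \asymp 1/z$ and the estimates are the routine ones sketched above. I would carry out the endpoint bookkeeping once, carefully, and treat everything else as elementary.
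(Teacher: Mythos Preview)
Your plan follows the same line as the paper: reduce to $\int_1^\infty r^{\beta-1}\overline{\eta_t^*}(r)\,\dd r < \infty$ via \cite[Theorem 25.3]{Sato}, insert the formula from Lemma~\ref{lemma:X*-chf}, and analyze the resulting triple integral. Your necessity argument (restrict $x$ to a compact set in the support of $\pi$, lower-bound $1/(z-xr)$ by $1/z$ away from the endpoint) works fine; the paper instead makes the substitution $u = xr/z$ once and for all, turning the kernel $1/(z-xr)$ into $(1-u)^{-1}$ on $u \in (x/z,\,1-e^{-xt})$, and then simply notes that the $u$-integral is increasing in $z$.

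For sufficiency the paper's substitution also sidesteps any endpoint/tail split, and here your bookkeeping slips in two places. First, the endpoint estimate is reversed: a direct computation gives
\[
\int_{a}^{2a}\frac{\dd z}{z-xr} = \log(e^{xt}+1), \qquad a = \frac{xr}{1-e^{-xt}},
\]
which is $O(1)$ for $x \leq 1$ (not $O(\log(1/x))$) and $\sim xt$ for $x > 1$ (not $O(1)$). Fortunately the correct $O(x)$ bound for large $x$, after the outer $x^{-1}\pi(\dd x)$ and the $r$-integration, still lands precisely on $\int_{(1,\infty)} x^{-\beta}\pi(\dd x)\int_x^\infty \overline\lambda(z) z^{\beta-1}\,\dd z$; the paper obtains the same $cx$ bound from its $u$-integral. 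Second, the claim ``$m_{-\beta}(\pi)\leq m_{-1}(\pi)$ for $\beta \geq 1$'' is false (on $(0,1]$ the inequality goes the other way). What you actually need --- and what the paper uses --- is only $\int_{(1,\infty)} x^{-\beta}\pi(\dd x) \leq \int_{(1,\infty)} x^{-1}\pi(\dd x) < \infty$ for $\beta \geq 1$; the $x \leq 1$ contribution is controlled by $m_{-1}(\pi)$ alone because the factor $x^{-\beta}$ is cancelled by the $O(x^\beta)$ coming from the short integration range (equivalently, from $1-e^{-xt} \asymp xt$). With these two corrections your argument goes through and matches the paper's.
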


\begin{proof}
By \cite[Theorem 25.3]{Sato},
$\E X^*(t)^\beta < \infty$ if and only if $\int_1^\infty r^{\beta -1} \overline 
\eta_t^*(r) \dd r < \infty$. By Lemma \ref{lemma:X*-chf}, 
and changing variables $u = \tfrac{xr}{z}$
\begin{equation} \label{eq:X*-mom-aux1}
\begin{split}
& \int_{1}^\infty r^{\beta - 1} \overline \eta_t^*(r) \dd r \\
& = \int_{1}^\infty \dd r \int_{(0,\infty)} \pi (\dd x) 
\int_{0}^\infty \dd z 
\frac{r^{\beta- 1} \overline  \lambda(z)}{x (z - x r)} 
\bone \left( z > \frac{x r}{1 - e^{-x t}} \right) \\
& = \int_{(0,\infty)} x^{-1} \pi (\dd x) 
\int_{x/(1-e^{-x t})}^{\infty} \dd z 
\overline \lambda(z) z^{\beta -1} x^{-\beta} 
\int_{x/z}^{1- e^{-x t}} \frac{u^{\beta -1} }{1-u} \dd u.
\end{split}
\end{equation}
The integral with respect to $u$ is an increasing function in $z$, therefore 
in each case
$\E X^*(t)^\beta < \infty$ implies 
$\int_1^\infty z^{\beta- 1} \overline \lambda(z) \dd z < \infty$,
which further implies
$\int_{(1,\infty)} z^\beta \lambda(\dd z) < \infty$.

For the converse, note that
\[
\sup_{0 \leq x \leq 1} 
x^{-\beta} \int_{0}^{1- e^{-x t}} u^{\beta- 1} (1-u)^{-1} \dd u
< \infty,
\]
while for $x> 1$
\[
\int_{0}^{1- e^{-x t}} u^{\beta- 1} (1-u)^{-1} \dd u = t \int_{0}^x (1-e^{-ty})^{\beta-1} \dd y \leq c x.
\]
Indeed, for $\beta\geq 1$ this follows from $1-e^{-ty}\leq 1$. For $\beta<1$ we have
\[
t \int_{0}^x (1-e^{-ty})^{\beta-1} \dd y \leq t \int_{0}^1 (1-e^{-ty})^{\beta-1} \dd y + t \int_{1}^x (1-e^{-t})^{\beta-1} \dd y \leq cx.
\] 
Substituting back into \eqref{eq:X*-mom-aux1} 
and using also that $u \geq 1 - e^{-u}$, we obtain
\[
\begin{split}
\int_{1}^\infty \overline \eta_t^*(r) r^{\beta - 1} \dd r 
& \leq c \int_{(0,1]} x^{-1} \pi (\dd x) \int_{1/t}^\infty \overline \lambda(z)
z^{\beta -1} \dd z  \\
& \quad + 
t \int_{(1,\infty)} x^{-\beta} \pi(\dd x) 
\int_{x}^\infty \overline \lambda(z) z^{\beta -1} \dd z,
\end{split}
\]
and the finiteness of the integral follows.
\end{proof}

Thus, in most cases the behavior of $\pi$ does not have effect on the 
existence of the moments. However, it does have effect on the 
existence of exponential moments.

\begin{lemma} \label{lemma:X*-expmoment}
Let $\varepsilon_0 = \sup \{ s \geq 0 : \pi((0,s)) = 0 \}$, $K_0(t) = \frac{1 - e^{-\varepsilon_0 t}}{\varepsilon_0}$, with $K_0(t) = t$ for $\varepsilon_0 = 0$. If
$\E e^{s X^*(t)} < \infty$, for $s > 0$ 
then $\int_{(1,\infty)} e^{zy} \lambda(\dd z) < \infty$
for any $y < K_0(t) s$. On the other hand, if 
$\int_{(1, \infty)} e^{z K_0(t) s} \lambda (\dd z ) < \infty$, then 
$\E e^{s X^*(t)} < \infty$.
\end{lemma}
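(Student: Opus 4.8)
The plan is to characterize the existence of the exponential moment $\E e^{s X^*(t)}$ through the L\'evy measure $\eta_t^*$ of the infinitely divisible random variable $X^*(t)$, exactly as in the proof of Corollary \ref{cor:X*moments} but with the criterion for exponential moments instead of power moments. Recall from \cite[Theorem 25.17]{Sato} that for an infinitely divisible random variable with L\'evy measure supported on $(0,\infty)$, one has $\E e^{s X^*(t)} < \infty$ if and only if $\int_{(1,\infty)} e^{sr} \overline{\eta}_t^*(r)\, \dd r < \infty$ (equivalently $\int_{(1,\infty)} e^{sr}\,\eta_t^*(\dd r) < \infty$). So the whole statement reduces to analyzing when $\int_1^\infty e^{sr}\,\overline\eta_t^*(r)\,\dd r$ is finite, using the explicit formula
\[
\overline \eta_t^*(r) = \int_{(0,\infty)} x^{-1} \pi (\dd x) \int_{\frac{x r}{1- e^{-x t}}}^\infty
 \overline \lambda(z) \frac{1}{z - x r} \dd z
\]
from Lemma \ref{lemma:X*-chf}.

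First I would substitute the formula for $\overline\eta_t^*$ and, after an interchange of integrals (Tonelli, everything nonnegative), change variables to bring the inner integral into a manageable shape; the substitution $u = xr/z$ used in Corollary \ref{cor:X*moments} is the natural one, turning the $z$-region $z > xr/(1-e^{-xt})$ into $u < 1 - e^{-xt}$ and producing a factor $\int_{x/z}^{1-e^{-xt}} (1-u)^{-1}\,\dd u$. The key quantitative point is that $\sup_{x}(1 - e^{-xt})/x$-type quantities behave like $K_0(t)$: when $\pi$ puts no mass near $0$, i.e.\ $\varepsilon_0 > 0$, the relevant ratio $(1-e^{-xt})/x$ is bounded by $(1-e^{-\varepsilon_0 t})/\varepsilon_0 = K_0(t)$ on the support of $\pi$, whereas when $\varepsilon_0 = 0$ it can be as large as $t = K_0(t)$ (its $x\downarrow 0$ limit). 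This is the mechanism by which $\pi$ enters the exponential-moment threshold even though it was essentially irrelevant for power moments.

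For the ``only if'' direction: assuming $\E e^{sX^*(t)} < \infty$, hence $\int_1^\infty e^{sr}\,\overline\eta_t^*(r)\,\dd r < \infty$, I would bound $\overline\eta_t^*(r)$ from below by restricting the $x$-integral to a small neighborhood of $\varepsilon_0$ (where $(1-e^{-xt})/x$ is close to its supremum $K_0(t)$) and the inner $u$-integral from below by its value near $u=1$; this shows $\overline\eta_t^*(r) \gtrsim c \int_{c' r}^\infty \overline\lambda(z)\,\dd z$ for a constant arbitrarily close to $1/K_0(t)$, whence $\int_1^\infty e^{sr}\int_{r/K_0(t)}^\infty \overline\lambda(z)\,\dd z\,\dd r < \infty$ up to $\varepsilon$-losses in the constant; a Fubini/integration-by-parts then yields $\int_{(1,\infty)} e^{zy}\lambda(\dd z) < \infty$ for every $y < K_0(t) s$. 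For the ``if'' direction, assuming $\int_{(1,\infty)} e^{zK_0(t)s}\lambda(\dd z) < \infty$, I would split the $x$-integral at $x=1$ as in Corollary \ref{cor:X*moments}: on $x \le 1$ use $(1-e^{-xt})/x \le t \le K_0(t)$ and the boundedness of $\sup_{x\le 1} x^{-1}\int_0^{1-e^{-xt}}(1-u)^{-1}\,\dd u$ (which diverges logarithmically at $u=1$, so one must keep the $\log$ factor and absorb it into the exponential gain — this is where a little care is needed), while on $x > 1$ the inner $u$-integral is $O(x)$ by the same estimate as in the corollary; in both pieces $e^{sr}\overline\eta_t^*(r)$ is dominated by (a constant times) $e^{sr}$ times a tail of $\overline\lambda$ evaluated at an argument $\ge r/K_0(t)$, and $\int e^{zK_0(t)s}\lambda(\dd z) < \infty$ closes the estimate.

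The main obstacle I anticipate is making the constant in the threshold \emph{exactly} $K_0(t)$ rather than something merely comparable: the lower bound for the ``only if'' part needs the supremum of $(1-e^{-xt})/x$ over $\mathrm{supp}(\pi)$ to be attained (or approached) cleanly, and the logarithmic singularity of $\int^{1-e^{-xt}}(1-u)^{-1}\,\dd u$ at $u=1$ must be handled so that it neither helps nor hurts at the critical exponent — it contributes only a polynomial-in-$r$ factor after the $u \leftrightarrow z$ change of variables, which is harmless against the exponential but does mean the endpoint $y = K_0(t)s$ itself is borderline (hence the ``for any $y < K_0(t)s$'' in the statement and the closed inequality $e^{zK_0(t)s}$ in the sufficient condition). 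Tracking these $\varepsilon$-room issues carefully is the only genuinely delicate part; the rest is a routine adaptation of the moment computation already carried out in Corollary \ref{cor:X*moments}.
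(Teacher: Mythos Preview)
Your plan is essentially the paper's proof: reduce to $\int_1^\infty e^{sr}\overline\eta_t^*(r)\,\dd r$ via Sato's criterion, insert the formula from Lemma~\ref{lemma:X*-chf}, change variables $u=xr/z$, and exploit that $(1-e^{-xt})/x$ is decreasing with supremum $K_0(t)$ on $\mathrm{supp}(\pi)$. After the substitution the paper obtains
\[
\int_{1}^\infty e^{sr } \overline \eta_t^*(r) \dd r
= \int_{(0,\infty)} x^{-2} \pi(\dd x) \int_0^1 (1-u)^{-1} \dd u
\int_{0}^\infty \overline \lambda(z) e^{z su / x}
\bone \Big( \tfrac{x}{z} < u < 1- e^{-x t} \Big) \dd z,
\]
and both directions drop out immediately from this single identity: the exponent $su/x$ ranges exactly over $(0,sK_0(t))$ as $x$ runs through $\mathrm{supp}(\pi)$ and $u$ through $(0,1-e^{-xt})$.

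Two small corrections to your sketch of the ``if'' direction. First, the chain $(1-e^{-xt})/x \le t \le K_0(t)$ is backwards: one always has $K_0(t)\le t$, with equality only when $\varepsilon_0=0$. The inequality you actually need (and correctly stated earlier) is $(1-e^{-xt})/x \le K_0(t)$ for $x\in\mathrm{supp}(\pi)$, which comes from monotonicity in $x$, not from comparison with $t$. Second, the split at $x=1$ borrowed from Corollary~\ref{cor:X*moments} is unnecessary here: after the change of variables one simply bounds $e^{zsu/x}\le e^{zsK_0(t)}$ uniformly, pulls the resulting $z$-integral out as a finite constant, and the leftover $\int_{(0,\infty)} x^{-2}\pi(\dd x)\int_0^{1-e^{-xt}}(1-u)^{-1}\,\dd u$ equals $t\,m_{-1}(\pi)$ exactly (the ``logarithmic singularity'' you worry about is not there, since the $u$-integral is $-\log e^{-xt}=xt$). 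So the sufficiency is a one-line estimate rather than a two-regime argument.
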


\begin{proof}
From \cite[Theorem 25.3]{Sato} we see that $\E e^{s X^*(t)} < \infty$ if and only if
$\int_{(1,\infty)} e^{sr } \eta_t^*(\dd r) < \infty$, which holds 
if and only if $\int_1^\infty s e^{sr} \overline \eta_t^*(r) \dd r < \infty$.
Using Lemma \ref{lemma:X*-chf} and changing variables $u = \tfrac{xr}{z}$ we have
\[
\begin{split}
& \int_{1}^\infty e^{sr } \overline \eta_t^*(r) \dd r  = 
\int_1^\infty \dd r \int_{(0,\infty)} \pi (\dd x) 
\int_{\frac{x r}{1-e^{-x t}}}^\infty e^{sr} x^{-1} \overline \lambda(z) 
(z - x r)^{-1} \dd z \\
& = 
\int_{(0,\infty)} x^{-2} \pi (\dd x) \int_{0}^\infty \dd z
\int_{x/z}^{1-e^{-x t}} e^{s z u / x} \overline \lambda(z) (1-u)^{-1} \dd u
\bone \left(\frac{x}{z} < 1 - e^{-x t} \right) \\
& = 
\int_{(0,\infty)} x^{-2} \pi(\dd x) \int_0^1 (1-u)^{-1} \dd u 
\int_{0}^\infty \overline \lambda(z) e^{z su / x} \dd z 
\bone \left( \frac{x}{z} < u < 1- e^{-x t} \right).
\end{split}
\]
From the last formula we see that if the integral on the left-hand side is finite,
then $\int_1^\infty \overline \lambda(z) e^{z y} \dd z < \infty$
for $y < s (1-e^{-x t}) / x$, where $x \in \textrm{supp}(\pi)$.
Since $(1- e^{-x t})/x$ is decreasing in $x$, the first result follows.

Assuming $\int_{(1, \infty)} e^{z K_0(t) s} \lambda (\dd z )  < \infty$, 
we have 
\[
\int_{1}^\infty e^{sr } \overline \eta_t^*(r) \dd r \leq  
c \int_{(0,\infty)} x^{-2} \pi(\dd x) \int_0^{1- e^{-x t}} (1-u)^{-1} \dd u 
= c t. \qedhere
\]
\end{proof}

Lemma 4.2 in \cite{GLT21} states that if $\overline \lambda$ is $(-\gamma)$-regularly varying at infinity, which we denote by $\overline \lambda \in \mathcal{RV}_{-\gamma}$,
then $\E X^*(t)^q = \infty$ for all $q > \gamma$, which suggests power-law tail. 
The following statement makes this observation precise, by obtaining 
the exact asymptotics of the tail. We note that the multivariate regular variation result for the integrated process has already been established in \cite[Corollary 5.2]{moser2011}.

\begin{lemma}
Assume that $\overline \lambda \in \mathcal{RV}_{-\gamma}$ for some 
$\gamma > 0$. Further assume that there is $\varepsilon > 0$ such 
that 
\begin{equation} \label{eq:tail-ass}
\int_{(0,1)} x^{-1-\varepsilon} \pi(\dd x)
+ \int_{(1,\infty)} x^{\varepsilon - \gamma} \pi(\dd x)
< \infty.
\end{equation}
Then as $r \to \infty$
\[
\overline \eta_t^*(r) \sim \overline \lambda (r)
\int_{(0,\infty)} 
x^{-\gamma-1}
\int_{0}^{1-e^{-x t}} y^{\gamma -1} (1-y)^{-1} \dd y \, \pi(\dd x)  .
\]
\end{lemma}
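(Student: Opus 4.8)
The plan is to read off the asymptotics from the explicit tail formula in Lemma \ref{lemma:X*-chf} via a change of variables and a dominated convergence argument with Potter bounds. In the inner integral of Lemma \ref{lemma:X*-chf} I would substitute $z = xr/u$, obtaining
\[
\overline\eta_t^*(r) = \int_{(0,\infty)} x^{-1}\pi(\dd x)\int_{0}^{1-e^{-xt}} \overline\lambda\!\Bigl(\frac{xr}{u}\Bigr)\,\frac{\dd u}{u(1-u)} .
\]
Dividing by $\overline\lambda(r)$, it then suffices to prove $\int_{(0,\infty)} g_r(x)\,\pi(\dd x)\to \int_{(0,\infty)} g_\infty(x)\,\pi(\dd x)$, where
\[
g_r(x) = x^{-1}\int_{0}^{1-e^{-xt}}\frac{\overline\lambda(xr/u)}{\overline\lambda(r)}\,\frac{\dd u}{u(1-u)},\qquad g_\infty(x) = x^{-\gamma-1}\int_0^{1-e^{-xt}} y^{\gamma-1}(1-y)^{-1}\,\dd y .
\]
For the pointwise limit $g_r(x)\to g_\infty(x)$ at each fixed $x>0$, regular variation gives $\overline\lambda(xr/u)/\overline\lambda(r)\to (x/u)^{-\gamma}$ pointwise in $u$; since the argument $xr/u\ge xr\to\infty$, a Potter bound (valid for $r$ large depending on $x$) supplies, on $(0,1-e^{-xt})$, a $u$-integrable majorant behaving like $u^{\gamma-\delta-1}$ near $0$ for some $\delta<\gamma$ and bounded near the endpoint $1-e^{-xt}<1$, so the inner integral converges by dominated convergence in $u$.

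\textbf{The uniform majorant.} The crux is a single $\pi$-integrable $G$ with $g_r(x)\le G(x)$ for all large $r$; I would split $(0,\infty)=(0,1]\cup(1,\infty)$ and fix $\delta\in(0,\varepsilon\wedge\gamma)$. On $(1,\infty)$ the argument $z=xr/u$ exceeds $r$, so Potter's inequality yields $\overline\lambda(xr/u)/\overline\lambda(r)\le A\,(x/u)^{-\gamma+\delta}$; integrating in $u$ and using $\int_0^{1-e^{-xt}}(1-u)^{-1}\dd u = xt$ gives $g_r(x)\le C\,x^{-\gamma+\delta}\le C\,x^{\varepsilon-\gamma}$, which is $\pi$-integrable over $(1,\infty)$ by \eqref{eq:tail-ass}. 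On $(0,1]$ one has $u\le 1-e^{-xt}\le 1-e^{-t}$, hence $(1-u)^{-1}\le e^{t}$; undoing the substitution back to $w=xr/u$ turns the inner integral into $\int_{xr/(1-e^{-xt})}^{\infty}\overline\lambda(w)w^{-1}\dd w$, and bounding the lower limit below by $r/t$ via $1-e^{-xt}\le xt$ makes it independent of $x$. Karamata's theorem applied to $\overline\lambda(w)w^{-1}\in\mathcal{RV}_{-\gamma-1}$ gives $\overline\lambda(r)^{-1}\int_{r/t}^{\infty}\overline\lambda(w)w^{-1}\dd w\to t^{\gamma}/\gamma$, so $g_r(x)\le C_t\,x^{-1}\le C_t\,x^{-1-\varepsilon}$ on $(0,1]$ for $r$ large, again $\pi$-integrable by \eqref{eq:tail-ass}. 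Taking $G$ equal to this bound and noting that the same estimates yield $g_\infty(x)\le G(x)$, dominated convergence gives
\[
\lim_{r\to\infty}\frac{\overline\eta_t^*(r)}{\overline\lambda(r)} = \int_{(0,\infty)} x^{-\gamma-1}\int_0^{1-e^{-xt}} y^{\gamma-1}(1-y)^{-1}\,\dd y\,\pi(\dd x),
\]
which is the assertion.

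\textbf{Main obstacle.} The delicate region is $x\downarrow 0$, where $xr/u$ need not be large even for large $r$, so a direct Potter estimate on $\overline\lambda(xr/u)/\overline\lambda(r)$ is unavailable. The device resolving this is to integrate out $u$ first, which collapses the inner integral to the $x$-free tail $\int_{r/t}^{\infty}\overline\lambda(w)w^{-1}\dd w$ controlled by Karamata's theorem; the small-$x$ mass is then absorbed by $\int_{(0,1)} x^{-1-\varepsilon}\pi(\dd x)<\infty$ and the large-$x$ mass by $\int_{(1,\infty)} x^{\varepsilon-\gamma}\pi(\dd x)<\infty$, exactly the two pieces of \eqref{eq:tail-ass}.
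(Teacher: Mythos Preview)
Your argument is correct and follows essentially the same route as the paper: the same change of variables $z=xr/u$ (the paper writes $y$), followed by Potter-type bounds and dominated convergence. The paper is terser, simply recording that the asymptotic equality ``follows from Potter's bounds \ldots, the integrability assumption, and Lebesgue's dominated convergence theorem,'' whereas you spell out the majorant explicitly.

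One minor remark: your ``main obstacle'' is not really an obstacle. Since $u\le 1-e^{-xt}\le xt$, one has $xr/u\ge r/t\to\infty$ uniformly in $x>0$, so a direct Potter bound on $\overline\lambda(xr/u)/\overline\lambda(r)$ is in fact available for all $x$; splitting according to whether $x/u\ge 1$ or $x/u<1$ and using $1-e^{-xt}\le xt$ yields the same $C\,x^{-1}$ majorant on $(0,1]$ without the detour through Karamata. Your Karamata argument is nonetheless valid and leads to the same conclusion.
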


Note that the second integral in \eqref{eq:tail-ass} is automatically finite 
(i) for any $\gamma > 0$ and $\varepsilon < \gamma$ if $m_0(\pi) < \infty$,
or (ii) for any $\gamma > 1$ and $\varepsilon < \gamma - 1$. 

\begin{proof}
Assuming $\overline \lambda(z) = \ell(z) / z^\gamma$ for some $\ell$ slowly varying at infinity, we have
\[
\begin{split}
\int_{\frac{x r}{1-e^{-x t}}}^\infty 
\frac{\overline \lambda(z)}{z-x r}
\dd z 
= (x r)^{-\gamma} \int_0^{1-e^{-x t}} y^{\gamma -1} (1-y)^{-1} 
\ell(x r/ y) \dd y.
\end{split}
\]
Note that 
\[
\lim_{x \downarrow 0} x^{-\gamma} \int_{0}^{1-e^{-x t}} 
y^{\gamma -1} (1-y)^{-1} \dd y = t^{\gamma} \gamma^{-1}.
\]
Thus, by Lemma \ref{lemma:X*-chf}
\[
\begin{split}
\overline \eta_t^*(r) & =
\frac{\ell(r)}{r^\gamma} \int_{(0,\infty)} 
x^{-\gamma-1}
\int_{0}^{1-e^{-x t}} y^{\gamma -1} (1-y)^{-1} \frac{\ell(x r/y)}{\ell(r)}
\dd y \, \pi(\dd x) \\
& \sim \overline \lambda(r) 
\int_{(0,\infty)} 
x^{-\gamma-1}
\int_{0}^{1-e^{-x t}} y^{\gamma -1} (1-y)^{-1} \dd y \, \pi(\dd x),\quad 
\text{ as } r\to \infty,
\end{split}
\]
where the last asymptotic equality follows from 
Potter's bounds \cite[Theorem 1.5.6]{BGT}, the integrability assumption, and 
Lebesgue's dominated convergence theorem.
\end{proof}

\section{Proof of Theorem \ref{thm:fv}}\label{sec:proof-fv}

We will first consider the case when $\lambda((-\infty,0))=0$. Note that through this section we assume that $\int_{(0,1]} z \lambda(\dd z) < \infty$ and that $a = \int_{(0,1]} z \lambda(\dd z)$, hence there is no centering. The integrated process then has the form
\begin{equation*}
X^*(t) = \int_{(0,\infty)} \int_{\R} \int_{(0,\infty)} z f_t(x,s)  
\mu( \dd x, \dd s, \dd z),
\end{equation*}
and we can decompose it as
\begin{equation}\label{eq:fv-decomposition}
\begin{aligned}
X^*(t) =& \int_{(0,\infty)} \int_{(-\infty,0]} 
\int_{(0,\infty)} z x^{-1} (1 - e^{-x t}) e^{x s}  \mu( \dd x, \dd s, \dd z) \\
 & + \int_{(0,\infty)} \int_{(0,t]} \int_{(0,\infty)} z x^{-1} \mu( \dd x, \dd s, \dd z)\\
&- \int_{(0,\infty)} \int_{(0,t]} \int_{(0,\infty)} z x^{-1} e^{-x (t - s)} \mu( \dd x, \dd s, \dd z) \\
=: & \, X_-^*(t) + X_{+,1}^*(t) - X_{+,2}^*(t).
\end{aligned}
\end{equation}
We now consider each of the terms separately. 
The term $X_{-}^*$ corresponding to negative jump times is easy to 
handle, and it is treated in Lemma \ref{lemma:Xminus-bound}.
The main term $X_{+,1}^*$ is a subordinator, a nonnegative increasing 
L\'evy process. We apply the classical Marcinkiewicz--Zygmund theorem, and 
its process version, to determine the almost sure growth rate of $X_{+,1}^*$.
This part is done in Lemma \ref{lemma:I1}. The difficult part is to handle 
the other negligible term $X_{+,2}^*$ for $\gamma \geq 1$. We need a fine 
analysis of the Poisson random measure $\mu$: we decompose the space 
$(0,\infty) \times (0,t] \times (0,\infty)$ to countably many pieces
according to the jump sizes and obtain sharp bounds on the number 
of points in each piece. This is done in Lemma \ref{lemma:I2}, with the 
necessary bounds in Lemma \ref{lemma:nu-r1r2} and Corollary \ref{cor:nu-r}.
This part of the proof is similar to the techniques used in \cite{CK}.

Let $(\xi_k, \tau_k, \zeta_k)_{k \geq 0}$ denote the points of the 
Poisson random measure $\mu$. Then 
\[
X^*_{-}(t) = \sum_{ \tau_k \leq 0} \frac{\zeta_k}{\xi_k} 
(1 - e^{-\xi_k t}) e^{\xi_k \tau_k}.
\]

\begin{lemma} \label{lemma:Xminus-bound}
Let $\gamma \in (0,2]$.
For $\gamma > 1$ suppose that  
$\int_{(0,1)} x^{-\gamma} \pi (\dd x)  < \infty$ and
$\int_{(0,\infty)} z \lambda(\dd z) < \infty$. Then 
\begin{equation*}
\lim_{t \to \infty} \frac{X_-^*(t)}{t^{1/\gamma}} = 0 \quad \text{a.s.}
\end{equation*}
\end{lemma}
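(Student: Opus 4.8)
The plan is to show that $X_-^*(t)/t^{1/\gamma} \to 0$ almost surely by controlling the random variable $X_-^*(t)$ uniformly in $t$. The key observation is that $X_-^*$ is monotone: since all summands $\frac{\zeta_k}{\xi_k}(1-e^{-\xi_k t})e^{\xi_k \tau_k}$ are nonnegative and increasing in $t$, the process $t \mapsto X_-^*(t)$ is nondecreasing. Moreover, $1 - e^{-\xi_k t} \leq 1$, so $X_-^*(t) \leq X_-^*(\infty) := \sum_{\tau_k \leq 0} \frac{\zeta_k}{\xi_k} e^{\xi_k \tau_k}$, a single random variable not depending on $t$. Hence, if I can show $X_-^*(\infty) < \infty$ almost surely, then $X_-^*(t)$ is almost surely bounded, and dividing by $t^{1/\gamma} \to \infty$ gives the claim immediately — with no need to distinguish $\gamma \leq 1$ from $\gamma > 1$ at all for this term.

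So the first step is to check that $X_-^*(\infty) < \infty$ a.s. This is a sum over the points of a Poisson random measure, so by the standard criterion (e.g.\ the Lévy--Khintchine / Campbell-type characterization, as in \cite[Theorem 25.3]{Sato} or directly via the fact that a sum of nonnegative Poissonian contributions is a.s.\ finite iff the intensity integrates $\min(y,1)$), $X_-^*(\infty)$ is a well-defined finite-valued infinitely divisible random variable provided
\[
\int_{(0,\infty)} \int_{(-\infty,0]} \int_{(0,\infty)} \min\!\left( \frac{z}{x} e^{x s},\, 1 \right) \lambda(\dd z)\, \dd s\, \pi(\dd x) < \infty.
\]
The inner integral over $s \in (-\infty,0]$ of $\frac{z}{x}e^{xs}$ is $\frac{z}{x^2}$, and splitting according to whether $\frac{z}{x}e^{xs} \lessgtr 1$ one finds the integral is comparable to $\int x^{-2}\pi(\dd x)\int z\bone(z\le x)\lambda(\dd z) + \int x^{-1}\pi(\dd x)\int \bone(z>x)\lambda(\dd z) + (\log\text{-type terms})$. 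All of these are finite under the standing assumptions: the defining condition $m_{-1}(\pi)<\infty$ together with $\int_{(0,1]} z\,\lambda(\dd z) < \infty$, $\int_{|z|>1}\log|z|\,\lambda(\dd z)<\infty$, and \eqref{eq:gamma-0} (which follows from \eqref{eq:gamma-cond}) handle the pieces; for $\gamma > 1$ the additional hypothesis $\int_{(0,1)} x^{-\gamma}\pi(\dd x) < \infty$ with $\int_{(0,\infty)} z\,\lambda(\dd z)<\infty$ is more than enough to dominate the term with $z \le x$. I expect this integrability bookkeeping — carefully splitting the domain and matching each piece to the right hypothesis — to be the only real work in the proof, and the main (though quite mild) obstacle.

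Once $X_-^*(\infty) < \infty$ a.s.\ is established, the proof concludes: on the almost sure event $\{X_-^*(\infty) < \infty\}$ we have $0 \le X_-^*(t) \le X_-^*(\infty)$ for all $t > 0$, hence $0 \le X_-^*(t)/t^{1/\gamma} \le X_-^*(\infty)/t^{1/\gamma} \to 0$ as $t \to \infty$. This gives $\lim_{t\to\infty} X_-^*(t)/t^{1/\gamma} = 0$ a.s., as required. (Note the monotonicity in $t$ also means there is no measurability subtlety in taking the limit, even though we only have a measurable modification of $X$.)
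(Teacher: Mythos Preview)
Your plan has a genuine gap: the dominating random variable $X_-^*(\infty)=\sum_{\tau_k\le 0}\frac{\zeta_k}{\xi_k}e^{\xi_k\tau_k}$ need \emph{not} be finite under the hypotheses of the lemma. The necessary and sufficient condition for $X_-^*(\infty)<\infty$ a.s.\ is exactly
\[
\dint_{(0,\infty)^2}\left(\bone(z\le x)\,\frac{z}{x^2}+\bone(z>x)\,\frac{1}{x}\Big(1+\log\frac{z}{x}\Big)\right)\pi(\dd x)\,\lambda(\dd z)<\infty,
\]
which the paper records in the Remark immediately following the proof as a \emph{stronger} assumption than what the lemma requires. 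Neither the standing conditions $m_{-1}(\pi)<\infty$, $\int_{(0,1]}z\,\lambda(\dd z)<\infty$, $\int_{|z|>1}\log z\,\lambda(\dd z)<\infty$, nor the extras for $\gamma>1$, nor condition \eqref{eq:gamma-0} (which, incidentally, is not a hypothesis of this lemma) imply this. A concrete counterexample for $\gamma\le 1$: take $\lambda=\delta_1$ and $\pi=\sum_{n\ge 1}n^{-2}e^{-2^n}\delta_{e^{-2^n}}$; then $m_{-1}(\pi)=\sum n^{-2}<\infty$ and even \eqref{eq:gamma-cond} holds for any $\gamma<1$, yet the $\bone(z>x)$ piece above equals $\int_{(0,1)}x^{-1}\log(1/x)\,\pi(\dd x)=\sum_n 2^n/n^2=\infty$, so $X_-^*(\infty)=\infty$ a.s. A similar failure occurs for $\gamma>1$: with $\pi(\dd x)=x^{1/4}\bone_{(0,1)}(x)\,\dd x$ and $\lambda(\dd z)=z^{-3/2}\bone_{(0,1)}(z)\,\dd z$, all hypotheses hold for $\gamma\in(1,5/4)$, but the $\bone(z\le x)$ piece diverges.

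The paper avoids this by using a \emph{weaker} domination for $\gamma\le 1$: from $1-e^{-\xi t}\le \xi t$ one gets $t^{-1}X_-^*(t)\le\sum_{\tau_k<0}\zeta_k e^{\xi_k\tau_k}$, and \emph{this} sum is finite under the standing assumptions alone (the extra $1/\xi_k$ disappears). Dominated convergence then gives $t^{-1}X_-^*(t)\to 0$. For $\gamma>1$ the paper takes a different route entirely: it computes $\E X_-^*(t)=m_1(\lambda)\int x^{-2}(1-e^{-tx})\,\pi(\dd x)=o(t^{2-\gamma})$ (here the hypotheses $\int_{(0,1)}x^{-\gamma}\pi(\dd x)<\infty$ and $m_1(\lambda)<\infty$ are actually used), applies Markov's inequality and Borel--Cantelli along a polynomial subsequence, and fills in by monotonicity. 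Your observation that the hypotheses split at $\gamma=1$ ``for no reason'' should have been a warning sign: those extra assumptions are precisely what make the first-moment argument work.
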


\begin{proof}
Let first $\gamma \in (0,1]$ and note that since $1 - e^{-x} \leq x$ we have
\[
t^{-1} \, X_-^*(t) \leq t^{-1} \sum_{\tau_k < 0}  
\frac{\zeta_k}{\xi_k} \xi_k t e^{\xi_k \tau_k}
 \leq 
\sum_{\tau_k < 0}  {\zeta_k} e^{\xi_k \tau_k}.
\]
If the latter sum is finite a.s., then Lebesgue's dominated convergence theorem  gives the result. The latter sum exists if and only if 
\[
\trint_{(0,\infty)^3} (z e^{-sx} \wedge 1) \nu(\dd x, \dd s, \dd z) < \infty.
\]
Simple calculation gives that 
\[
\begin{split}
& \trint_{(0,\infty)^3} (z e^{-sx} \wedge 1) \nu(\dd x, \dd s, \dd z) \\
& = \int_{(0,\infty)} x^{-1} \pi(\dd x) \left( 
\int_{(0,1]} z \lambda(\dd z) + \int_{(1,\infty)} ( 1 + \log z) \lambda(\dd z)
\right),
\end{split}
\]
thus the statement follows.
\medskip

For $\gamma \in (1,2)$, since $X_-^*(t)$ is a Poisson integral, we have
\begin{equation*}
\begin{split}
\E X_-^*(t)  & =  
\trint \frac{z}{x} (1 - e^{-tx} ) e^{xs} \nu (\dd x, \dd s, \dd z)  \\
& = m_1(\lambda) \, \int_{(0,\infty)} x^{-2} (1 - e^{-tx} ) \pi (\dd x).
\end{split}
\end{equation*}
By the assumption and Lebesgue's dominated convergence theorem
\[
\begin{split}
\int_{(0,\infty)} x^{-2} (1 - e^{-tx} ) \pi (\dd x) 
& = t^{2 - \gamma} 
\int_{(0,\infty)} x^{-\gamma} \frac{1 - e^{-tx}}{(tx)^{2-\gamma}}  \pi 
(\dd x) \\
& = t^{2 - \gamma} o(1), \quad t \to \infty,
\end{split}
\]
as the fraction in the integrand is bounded and tends to 0.
Summarizing, as $t \to \infty$
\[
\E X_-^*(t)  = t^{2-\gamma} o(1).
\]
Therefore, by Markov's inequality
\[
\p ( X_-^*(t) > \varepsilon t^{1/\gamma} ) 
\leq \frac{\E X_-^*(t)}{\varepsilon t^{1/\gamma}}
\leq t^{2 - \gamma - \gamma^{-1}} o(1),
\]
and by the Borel--Cantelli lemma, for $a > 0$ such that 
$\gamma +\gamma^{-1} - 2 > 1/a$,
with $t_n = n^a$
\[
\lim_{n \to \infty} t_n^{-1/\gamma} X_-^*(t_n) = 0.
\]
The convergence now follows by monotonicity, as for $t \in [t_n, t_{n+1}]$
\[
t^{-1/\gamma} X_-^*(t) \leq t_n^{-1/\gamma} X_-^*(t_{n+1}) =
t_{n+1}^{-1/\gamma} X_-^*(t_{n+1}) \, \left( \frac{t_{n+1}}{t_n} \right)^{1/\gamma}
\to 0.
\]

The same computation shows that for $\gamma = 2$
\[
\lim_{t \to \infty} \E X^*_{-}(t) = m_1(\lambda) m_{-2}(\pi).
\]
Therefore the same reasoning works with $a = 3$. \qedhere
\end{proof}

\begin{remark}
Under the stronger assumption 
\begin{equation} \label{eq:X-cond}
\dint_{(0,\infty)^2} \left( \bone(z \leq x) \frac{z}{x^2} + 
\bone( z > x) \frac{1}{x} \left( 1 + \log \frac{z}{x} \right) \right) 
\pi(\dd x) \lambda(\dd z) < \infty, 
\end{equation}
$\sup_{t \geq 0} X_-^*(t) < \infty$ a.s. Indeed,
$\sup_{t \geq 0} X_-^*(t) \leq 
\sum_{\tau_k \leq 0} \frac{\zeta_k}{\xi_k}  e^{\xi_k \tau_k},$
which is finite a.s.~if and only if \eqref{eq:X-cond} holds.
\end{remark}

Next we handle the main term.

\begin{lemma} \label{lemma:I1}
Assume that for some $\gamma \in (0,2)$
\begin{equation} \label{eq:i1-cond}
\dint_{(0,\infty)^2} \left( \frac{z}{x} \right)^{\gamma} \bone ( z > x) 
\pi(\dd x) \lambda(\dd z)  < \infty.
\end{equation}
Then 
\begin{equation} \label{eq:i1-as}
\lim_{t \to \infty} \frac{X_{+,1}^*(t) - \bone ( \gamma \geq 1) t m_1(\lambda) m_{-1}(\pi)}{t^{1/\gamma}} = 0 \quad \text{a.s.} 
\end{equation}
If \eqref{eq:i1-cond} holds with $\gamma = 2$, then the law of iterated logarithm holds, i.e.
\[
\limsup_{t \to \infty} \frac{| X_{+,1}^*(t) - t m_1(\lambda) m_{-1}(\pi)|} {\sqrt{2 t \log \log t}} = \sqrt{m_{2}(\lambda) m_{-2}(\pi)}\quad \text{a.s.} 
\]
\end{lemma}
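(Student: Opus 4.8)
The plan is to recognise $X_{+,1}^*$ as a subordinator and to read off its almost sure growth directly from the classical Marcinkiewicz--Zygmund strong law, and for $\gamma=2$ from the Hartman--Wintner law of the iterated logarithm, applied at integer times and then upgraded to continuous $t$ by monotonicity. First I would observe that since $\mu$ is a Poisson random measure with intensity $\pi(\dd x)\,\dd s\,\lambda(\dd z)$, the process
\[
t\longmapsto X_{+,1}^*(t)=\int_{(0,\infty)}\int_{(0,t]}\int_{(0,\infty)} z\,x^{-1}\,\mu(\dd x,\dd s,\dd z)
\]
has stationary and independent increments and is nonnegative and nondecreasing, hence is a driftless subordinator, whose L\'evy measure $\rho$ is the image of $\pi\otimes\lambda$ under $(x,z)\mapsto z/x$, i.e.\ $\rho(B)=\dint_{(0,\infty)^2}\bone(z/x\in B)\,\pi(\dd x)\lambda(\dd z)$. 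That $X_{+,1}^*$ is an a.s.\ finite subordinator, i.e.\ $\int_{(0,\infty)}(y\wedge1)\,\rho(\dd y)<\infty$, needs a short verification from the standing assumptions $m_{-1}(\pi)<\infty$, $\int_{(0,1]}z\,\lambda(\dd z)<\infty$, together with \eqref{eq:gamma-0}, which \eqref{eq:i1-cond} implies.

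The key point is that the left-hand side of \eqref{eq:i1-cond} is precisely $\int_{(1,\infty)}y^\gamma\,\rho(\dd y)$, so by \cite[Theorem 25.3]{Sato} condition \eqref{eq:i1-cond} is equivalent to $\E X_{+,1}^*(1)^\gamma<\infty$. Since the subordinator has no drift and no Gaussian part, when $\gamma\geq1$ its mean is $\E X_{+,1}^*(1)=\int_{(0,\infty)}y\,\rho(\dd y)=m_1(\lambda)\,m_{-1}(\pi)$, which is finite because \eqref{eq:i1-cond} with $\gamma\geq1$ forces $\int_{(1,\infty)}z\,\lambda(\dd z)<\infty$ while $\int_{(0,1]}z\,\lambda(\dd z)<\infty$ is assumed; and when $\gamma=2$ also $\Var X_{+,1}^*(1)=\int_{(0,\infty)}y^2\,\rho(\dd y)=m_2(\lambda)\,m_{-2}(\pi)$, matching the constant in the claimed LIL.

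Next I would put $Y_k:=X_{+,1}^*(k)-X_{+,1}^*(k-1)$, $k\geq1$, which are i.i.d.\ copies of $X_{+,1}^*(1)$, and $S_n:=X_{+,1}^*(n)=\sum_{k=1}^n Y_k$. For $\gamma\in(0,1)$ the Marcinkiewicz--Zygmund theorem \cite[Theorem 6.7.1]{Gut} gives $n^{-1/\gamma}S_n\to0$ a.s.\ (no centering), for $\gamma\in[1,2)$ it gives $n^{-1/\gamma}(S_n-n\,m_1(\lambda)m_{-1}(\pi))\to0$ a.s.\ (the case $\gamma=1$ being Kolmogorov's SLLN), and for $\gamma=2$ the Hartman--Wintner LIL (see, e.g., \cite{Gut}) gives $\limsup_n(2n\log\log n)^{-1/2}|S_n-n\,m_1(\lambda)m_{-1}(\pi)|=\sqrt{m_2(\lambda)m_{-2}(\pi)}$ a.s. To pass from integers to arbitrary $t$, I would use monotonicity: with $c:=\bone(\gamma\geq1)\,m_1(\lambda)m_{-1}(\pi)$ and $t\in[n,n+1]$,
\[
X_{+,1}^*(n)-(n+1)c\ \leq\ X_{+,1}^*(t)-tc\ \leq\ X_{+,1}^*(n+1)-nc ,
\]
and since $X_{+,1}^*(n+1)-nc=(X_{+,1}^*(n+1)-(n+1)c)+c$ and likewise for the lower bound, dividing by $t^{1/\gamma}$ (for $\gamma<2$, using $n^{1/\gamma}\leq t^{1/\gamma}$, $(n+1)/n\to1$, $c\,t^{-1/\gamma}\to0$) or by $\sqrt{2t\log\log t}$ (for $\gamma=2$, using $\sqrt{n\log\log n}/\sqrt{t\log\log t}\to1$ uniformly on $[n,n+1]$ and $c\,(2t\log\log t)^{-1/2}\to0$) transfers the conclusions, yielding \eqref{eq:i1-as} and the stated LIL.

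Once $X_{+,1}^*$ is identified as a subordinator the argument is essentially bookkeeping, so there is no single hard step; the points that deserve care are the verification that $X_{+,1}^*$ is an a.s.\ finite subordinator, the computation and finiteness of the centering constant $m_1(\lambda)m_{-1}(\pi)$ for $\gamma\geq1$, and ensuring the continuous-time upgrade is two-sided in the $\gamma=2$ case, which the monotone sandwich above delivers. It is worth noting that no large-deviation input is needed: by \eqref{eq:gamma-0} the subordinator has, on $[0,t]$, only finitely many jumps exceeding any fixed level, with mean count of order $t$, which is exactly why — in contrast to the heuristic sequence $(Y(n))$ of the introduction — it cannot produce peaks of order $t$.
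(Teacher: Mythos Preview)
Your proposal is correct and follows essentially the same approach as the paper: identify $X_{+,1}^*$ as a subordinator whose L\'evy measure is the pushforward of $\pi\otimes\lambda$ under $(x,z)\mapsto z/x$, observe that \eqref{eq:i1-cond} is precisely $\int_{(1,\infty)}y^\gamma\rho(\dd y)<\infty$ and hence equivalent to $\E X_{+,1}^*(1)^\gamma<\infty$, and then apply the Marcinkiewicz--Zygmund SLLN and the LIL. The only minor difference is that for $\gamma\in[1,2]$ the paper invokes continuous-time versions directly (the process Marcinkiewicz--Zygmund SLLN of \cite{Tiefeng} and the L\'evy LIL of \cite[Proposition~48.9]{Sato}), while you apply the discrete-time theorems on the i.i.d.\ increments $Y_k$ and pass to general $t$ via your monotone sandwich; both routes are standard and yield the same conclusion.
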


\begin{proof}
Note that $X_{+,1}^*(t)$ is a subordinator with characteristic function 
\[
\E e^{\ii u X_{+,1}^*(t)} = \exp  \left\{  t \int (  e^{\ii u y} - 1 ) \eta_1(\dd y)
\right\},
\]
where the L\'evy measure $\eta_1$ is given by
\[
\eta_1((r, \infty)) = 
\overline \eta_1(r) = \pi \times \lambda \left( \{ (x, z) : z > r x \} \right)
= \int_{(0,\infty)} \overline \lambda(r x) \pi ( \dd x).
\]
Thus $\E X_{+,1}^*(1)^\gamma < \infty$ if and only if 
$\int_{(1,\infty)} r^{\gamma} 
\eta_1(\dd r) < \infty$ which holds if and only if \eqref{eq:i1-cond} holds. Therefore, 
the Marcinkiewicz--Zygmund strong law of large numbers  implies that \eqref{eq:i1-as} 
holds whenever \eqref{eq:i1-cond} holds with $0 < \gamma < 2$. Indeed, if $\gamma < 1$, 
then by monotonicity the result follows from the usual partial sum version (\cite[Theorem 
6.7.1]{Gut}). For $\gamma \in [1,2)$, Theorem 2.1 in \cite{Tiefeng}, the 
process version 
of the Marcinkiewicz--Zygmund SLLN, gives the result since
\begin{equation*}
\E X_{+,1}^*(1) = \int_0^\infty \overline{\eta}_1(r) \dd r = m_1 (\lambda) m_{-1}(\pi).
\end{equation*}

The statement for $\gamma = 2$ follows from the law of iterated logarithm 
for L\'evy processes (see Proposition 48.9 in \cite{Sato}) and the fact that 
\[
\begin{split}
\Var (X_{+,1}^*(1))
= \trint_{(0,\infty)\times (0,1] \times (0,\infty)} 
\frac{z^2}{x^2} \nu(\dd x, \dd s , \dd z) 
=m_{2}(\lambda) m_{-2}(\pi),
\end{split}
\]
see e.g.~\cite[Theorem 2.7]{Kyprianou}.
\end{proof}

We note that in case of infinite mean ($\gamma < 1$) more precise integral 
tests are known, see \cite[Theorem III.13]{Bertoin}.

\medskip

It remains to consider $X_{+,2}^*$. For $t > 0$, $r > 0$ and $r_2 > r_1 > 0$ 
introduce the notation
\begin{equation*}
\begin{split}
& D(r,t)  = \left\{ (x,s,z) : 
\frac{z}{x} e^{-x(t-s)} > r, \ s \in (0, t), x > 0, z> 0
\right\}, \\
& D(r_1, r_2,t)  = 
\left\{ (x,s,z) : 
\frac{z}{x} e^{-x(t-s)} \in (r_1, r_2], \ s \in (0, t), x > 0, z> 0
\right\}.
\end{split}
\end{equation*}

\begin{lemma} \label{lemma:nu-r1r2}
For $t > 1$ and $0 < r_1 < r_2 < \infty$
\[
\begin{split}
\nu( D(r_1, r_2, t) ) & \leq 
t \int_{(0,t^{-1}]} \overline \lambda(r_1 x) \pi (\dd x) 
+ \log \frac{r_2}{r_1} \int_{(t^{-1}, \infty)} x^{-1} 
\overline \lambda(r_1 x)
\pi(\dd x) .
\end{split}
\]
\end{lemma}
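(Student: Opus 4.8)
The plan is to compute $\nu(D(r_1,r_2,t))$ directly from the definition $\nu(\dd x,\dd s,\dd z) = \pi(\dd x)\,\dd s\,\lambda(\dd z)$ and then bound the resulting integral. The key observation is that for fixed $x>0$ and $z>0$, the condition $\frac{z}{x}e^{-x(t-s)} \in (r_1,r_2]$ with $s\in(0,t)$ determines a set of admissible $s$ whose Lebesgue measure is easy to evaluate: writing $\frac{z}{x}e^{-x(t-s)} = r$ and solving, $s = t + x^{-1}\log(rx/z)$, so the map $s\mapsto \frac{z}{x}e^{-x(t-s)}$ is increasing, and the $s$-interval corresponding to values in $(r_1,r_2]$ has length $x^{-1}\log(r_2/r_1)$, intersected with $(0,t)$ — hence the $s$-measure is at most $\min\{t,\, x^{-1}\log(r_2/r_1)\}$. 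In addition, admissibility requires $z > r_1 x$ (otherwise $\frac{z}{x}e^{-x(t-s)} \le \frac{z}{x} \le r_1$ for all $s\le t$), so the $z$-integral is $\overline\lambda(r_1 x)$ up to the $s$-constraint.

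First I would fix $x$ and integrate out $s$ and $z$. Using Fubini,
\[
\nu(D(r_1,r_2,t)) = \int_{(0,\infty)} \pi(\dd x) \int_{(0,\infty)} \lambda(\dd z)\, \Leb\!\left(\left\{ s\in(0,t): \tfrac{z}{x}e^{-x(t-s)} \in (r_1,r_2]\right\}\right).
\]
For the inner Lebesgue measure, bound it by $\min\{t, x^{-1}\log(r_2/r_1)\}$ when $z > r_1 x$ and by $0$ otherwise; integrating in $z$ then gives the factor $\overline\lambda(r_1 x)$. This yields
\[
\nu(D(r_1,r_2,t)) \le \int_{(0,\infty)} \min\!\left\{ t,\ x^{-1}\log\tfrac{r_2}{r_1}\right\} \overline\lambda(r_1 x)\, \pi(\dd x).
\]
Then I would split the $x$-integral at $x = t^{-1}$: on $(0,t^{-1}]$ bound the minimum by $t$, and on $(t^{-1},\infty)$ bound it by $x^{-1}\log(r_2/r_1)$. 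This gives exactly the two stated terms.

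There is no real obstacle here — the argument is a routine Fubini computation plus an elementary estimate on the length of the preimage interval. The only mild care needed is the bookkeeping of the constraint $s\in(0,t)$ (which is why the $s$-measure is capped at $t$, producing the first term) and the observation that points with $z \le r_1 x$ contribute nothing (which is what lets us replace the $z$-integral by the tail $\overline\lambda(r_1 x)$ rather than something depending on $r_2$). The condition $t>1$ is not really used for this bound itself; it is presumably there so that $t^{-1} < 1$, aligning the split point with the behavior of $\pi$ near $0$ in the later applications. I would state the proof in roughly this order: (i) reduce to the one-dimensional $s$-integral via Fubini; (ii) evaluate/bound the length of the $s$-interval; (iii) integrate out $z$ to get $\overline\lambda(r_1 x)$; (iv) split at $x=t^{-1}$ and conclude.
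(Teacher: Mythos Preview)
Your proposal is correct and follows essentially the same approach as the paper: compute the $s$-measure of the preimage interval, observe that it is at most $t \wedge x^{-1}\log(r_2/r_1)$ when $z>r_1 x$ (and zero otherwise), integrate out $z$ to produce $\overline\lambda(r_1 x)$, and split the $\pi$-integral at $x=t^{-1}$. The paper merely writes the $s$-computation as a two-case split ($z/x\in(r_1,r_2]$ versus $z/x>r_2$) before arriving at the same bound $t\wedge x^{-1}\log(r_2/r_1)$, but the argument is identical in substance.
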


\begin{proof}
Since $\frac{z}{x} e^{-x u} \in (r_1, r_2]$ for $u > 0$ if
and only if (i) $\frac{z}{x} \in (r_1, r_2]$ and $x^{-1} \log \frac{z}{r_1 x} > u$,
or (ii) $\frac{z}{x} > r_2$ and 
$x^{-1} \log \frac{z}{r_2 x} \leq  u < x^{-1} \log \frac{z}{r_1 x}$, we have
\begin{equation*}
\begin{split}
\nu(D(r_1, r_2, t)) & = \dint \bone ( r_1 x < z \le r_2 x) 
\left( t \wedge x^{-1} \log \frac{z}{r_1 x} \right) \\
& \ + \bone( z > r_2 x)  \! \left( t \wedge x^{-1} \log \frac{z}{r_1 x} - 
t \wedge x^{-1} \log \frac{z}{r_2 x} \right) 
\pi(\dd x) \lambda(\dd z)  \\
& \leq 
\dint \bone ( r_1 x < z ) 
\left( t \wedge x^{-1} \log \frac{r_2}{r_1} \right) \pi(\dd x) \lambda(\dd z).
\end{split}
\end{equation*} 
Bounding the expression in the bracket by $t$ 
for  $ x \in (0,t^{-1}]$ and by $x^{-1} \log \tfrac{r_2}{r_1}$
for $x \in (t^{-1}, \infty)$, we obtain the statement.
\end{proof}

If $\gamma = 1$, we need an extra assumption:
\begin{equation} \label{eq:gamma1-ass}
\exists \delta > 0: \quad  m_{-1-\delta} (\pi) < \infty.
\end{equation}

\begin{corollary} \label{cor:nu-r}
Assume \eqref{eq:i1-cond} with $\gamma \in [1,2]$, and for $\gamma = 1$ also assume 
\eqref{eq:gamma1-ass}.
There exists $C = C(\pi, \lambda, \gamma)$ such that for any $0 \leq a < b$
\[
\nu( D(n^{a}, n^{b}, n) ) \leq 
\begin{cases}
C n^{2 - \gamma - a} \log n, & \gamma > 1, \\
C n^{1 - a - \delta/2}, & \gamma = 1 .
\end{cases}
\]
\end{corollary}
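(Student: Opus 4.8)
The plan is to specialize Lemma~\ref{lemma:nu-r1r2} to $t=n$, $r_1=n^a$, $r_2=n^b$, which gives
\[
\nu\big(D(n^a,n^b,n)\big)\ \le\ n\,I_1\ +\ (b-a)(\log n)\,I_2,\qquad
I_1=\int_{(0,n^{-1}]}\overline\lambda(n^a x)\,\pi(\dd x),\quad
I_2=\int_{(n^{-1},\infty)}x^{-1}\overline\lambda(n^a x)\,\pi(\dd x),
\]
so the corollary reduces to bounding $I_1$ and $I_2$. The factor $\log\tfrac{r_2}{r_1}=(b-a)\log n$ accounts for the single $\log n$ in the statement; in the way the estimate is applied in Lemma~\ref{lemma:I2} the exponents run over a grid of bounded mesh, so $b-a$ is bounded and can be absorbed into $C$.

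To estimate the two integrals I would split the $x$-range at $x=n^{-a}$, i.e. at the point where the argument $n^a x$ of $\overline\lambda$ equals $1$. On $\{x>n^{-a}\}$ one uses $\overline\lambda(u)\le u^{-\gamma}\int_{(1,\infty)}z^\gamma\lambda(\dd z)$, which is finite since \eqref{eq:i1-cond} forces $\gamma\le\eta$, and then integrates $x^{-1-\gamma}$ against $\pi$ using $\int_{(0,1]}x^{-\gamma}\pi(\dd x)<\infty$ (again from \eqref{eq:i1-cond}, i.e. $\gamma\le1+\alpha$); this yields a contribution of order $n^{2-\gamma-a}$, and the same bound for $nI_1$ when $a\ge1$. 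On $\{n^{-1}<x\le n^{-a}\}$ (non-empty only for $a<1$) the argument $n^a x$ is below $1$ and $\overline\lambda$ is largest; here the crude moment bound is too lossy, so I would instead use $\overline\lambda(n^a x)\le (n^a x)^{-\gamma}\int_{(n^a x,\infty)}z^\gamma\lambda(\dd z)$ together with the finiteness of the \emph{full} integral $\int_{(0,1]}x^{-\gamma}\int_{(x,\infty)}z^\gamma\lambda(\dd z)\,\pi(\dd x)<\infty$, carefully tracking the resulting power of $n$; for $\gamma>1$ the surplus $\log n$ should absorb the borderline factors. When $\gamma=1$ the power counting is exactly critical, and this is where \eqref{eq:gamma1-ass} enters: from $\int_{(0,r]}x^{-1}\pi(\dd x)\le r^{\delta}m_{-1-\delta}(\pi)$ one gains a factor $n^{-\delta}$ in the small-$x$ region, and after recombining the pieces at most $n^{\delta/2}$ is lost, producing the bound $Cn^{1-a-\delta/2}$.

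The main obstacle is precisely the small-$x$ region — the term $nI_1$ and the $x\le n^{-a}$ part of $I_2$ — where $\overline\lambda(n^a x)$ blows up: a bound relying only on the moment $\int_{(1,\infty)}z^\gamma\lambda(\dd z)<\infty$ is insufficient, and one must exploit the joint integrability \eqref{eq:i1-cond}, i.e. the interaction of the behaviour of $\pi$ and of $\lambda$ near the origin, and, in the critical case $\gamma=1$, the extra regularity \eqref{eq:gamma1-ass} of $\pi$ at zero. Once these estimates are in place, the corollary follows by inserting them into Lemma~\ref{lemma:nu-r1r2} and collecting powers of $n$.
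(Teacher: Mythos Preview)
Your starting point is right: specialise Lemma~\ref{lemma:nu-r1r2} to $t=n$, $r_1=n^a$, $r_2=n^b$. But the way you propose to estimate $I_1$ and $I_2$ misses the key simplification and, as written, does not give the stated bound.

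Recall that this corollary lives inside the proof of Theorem~\ref{thm:fv}, where the standing assumption is $\int_{(0,1]} z\,\lambda(\dd z)<\infty$; together with \eqref{eq:i1-cond} this gives $m_1(\lambda)=\int_{(0,\infty)} z\,\lambda(\dd z)<\infty$. Hence the \emph{first}-moment Markov bound $\overline\lambda(u)\le u^{-1}m_1(\lambda)$ holds for every $u>0$, in particular also when $u=n^a x<1$. There is no need to split at $x=n^{-a}$ or to invoke the joint integrability \eqref{eq:i1-cond}; the paper uses only its separate consequences $m_1(\lambda)<\infty$ and $m_{-\gamma}(\pi)<\infty$. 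With this bound one gets directly
\[
n\,I_1\le m_1(\lambda)\,n^{1-a}\!\int_{(0,n^{-1}]}x^{-1}\pi(\dd x)
\le m_1(\lambda)\,m_{-\gamma}(\pi)\,n^{2-\gamma-a},
\]
using $x^{-1}\le x^{-\gamma}n^{1-\gamma}$ on $(0,n^{-1}]$, and similarly $I_2\le m_1(\lambda)\,m_{-\gamma}(\pi)\,n^{2-\gamma-a}$ via $x^{-2}\le x^{-\gamma}n^{2-\gamma}$ on $(n^{-1},\infty)$. For $\gamma=1$ one replaces $m_{-\gamma}(\pi)$ by $m_{-1-\delta}(\pi)$ in both lines, gaining $n^{-\delta}$; the $\log n$ from Lemma~\ref{lemma:nu-r1r2} is then absorbed into $n^{\delta/2}$.

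Your proposed route through the $\gamma$-moment on the small-$x$ region actually yields a \emph{worse} power. Bounding $\overline\lambda(n^a x)\le (n^a x)^{-\gamma}\int_{(n^a x,\infty)}z^\gamma\lambda(\dd z)$ and then using $x^{-1}\le n$ on $(n^{-1},n^{-a}]$ together with \eqref{eq:i1-cond} gives at best a contribution of order $n^{1-a\gamma}$, and $(1-a\gamma)-(2-\gamma-a)=(\gamma-1)(1-a)>0$ for $\gamma>1$, $a<1$. So the ``main obstacle'' you describe is an artefact of choosing the wrong moment of $\lambda$; once you use the first moment, the argument is a two-line calculation.
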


\begin{proof}
Note that \eqref{eq:i1-cond} with $\gamma \in [1,2]$ implies that $m_{-\gamma}(\pi)<\infty$ and $m_{1}(\lambda)<\infty$. For $\gamma > 1$ we have
\[
\begin{split}
n \int_{(0,n^{-1}]} \pi (\dd x) \int_{(n^a x, \infty)} \lambda(\dd z)  
& \leq n \int_{(0,n^{-1}]} n^{-a} x^{-1} \pi (\dd x) m_{1}(\lambda) \\
& \leq m_{1}(\lambda) 
n^{1- a} \int_{(0,n^{-1}]} x^{-\gamma} n^{1 -\gamma } \pi (\dd x)  \\
& =m_{-\gamma}(\pi) m_{1}(\lambda) n^{2 - \gamma - a} ,
\end{split}
\]
and similarly,
\[
\begin{split}
\int_{(n^{-1}, \infty)} x^{-1} \pi (\dd x) \int_{(n^a x, \infty)} \lambda(\dd z) 
& \leq n^{-a} \int_{(n^{-1}, \infty)} x^{-2} \pi(\dd x) m_{1}(\lambda) \\
& \leq m_{-\gamma}(\pi) m_{1}(\lambda) n^{2 - \gamma - a}.
\end{split}
\]
The statement now follows from Lemma \ref{lemma:nu-r1r2}. For $\gamma = 1$ the proof is the same, only in the $\dd \pi$ integral we need $x^{-1-\delta}$.
\end{proof}

\begin{lemma} \label{lemma:I2}
Suppose that $\gamma \in (0,1)$, or $\gamma \in [1,2]$ and \eqref{eq:i1-cond} holds. If 
$\gamma = 1$, assume additionally that \eqref{eq:gamma1-ass} holds. Then
\[
\lim_{t \to \infty} t^{- 1/\gamma} X_{+,2}^*(t) = 0 \quad \text{a.s.}
\]
\end{lemma}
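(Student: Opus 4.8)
The plan is to follow the route sketched in the text. After reducing, as in the rest of the section, to $\lambda((-\infty,0))=0$, the quantity to be handled is $X_{+,2}^*(t)=\sum_{0<\tau_k\le t}\tfrac{\zeta_k}{\xi_k}e^{-\xi_k(t-\tau_k)}$. The range $\gamma\in(0,1)$ is immediate: since $e^{-\xi_k(t-\tau_k)}\le1$ one has $0\le X_{+,2}^*(t)\le X_{+,1}^*(t)$, and by Lemma~\ref{lemma:I1} the subordinator $X_{+,1}^*$ is $o(t^{1/\gamma})$ a.s.\ (indeed, even $X_{+,1}^*(t)=O(t)=o(t^{1/\gamma})$ whenever $X_{+,1}^*$ has finite mean). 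So assume $\gamma\in[1,2]$; then \eqref{eq:i1-cond} forces $m_1(\lambda)<\infty$ and $m_{-\gamma}(\pi)<\infty$, so $X_{+,1}^*$ is a genuine subordinator with finite mean.

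I would first pass from continuous time to a suitably spaced increasing sequence $(t_k)$. For $t\in[t_k,t_{k+1}]$, splitting the defining sum according to whether $\tau_j\le t_k$ or $\tau_j>t_k$ and bounding the exponential factors by $e^{-\xi_j(t_k-\tau_j)}$, respectively by $1$, gives
\[
\sup_{t\in[t_k,t_{k+1}]}X_{+,2}^*(t)\ \le\ X_{+,2}^*(t_k)\ +\ \bigl(X_{+,1}^*(t_{k+1})-X_{+,1}^*(t_k)\bigr).
\]
The increment has the law of $X_{+,1}^*(t_{k+1}-t_k)$ and finite $\gamma$-th moment, so by Borel--Cantelli (using that $\E X_{+,1}^*(1)^\gamma<\infty$ is equivalent to $\sum_N\p(X_{+,1}^*(1)>\varepsilon N^{1/\gamma})<\infty$) it is $o(t_k^{1/\gamma})$ a.s.\ for an appropriate choice of $(t_k)$ (taking $t_k=N$ integer when $\gamma$ is close to $2$). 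Thus it suffices to prove $X_{+,2}^*(t_k)\le\varepsilon t_k^{1/\gamma}$ eventually, for each fixed $\varepsilon>0$.

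For this I would decompose the points of $\mu$ in $(0,\infty)\times(0,t_k]\times(0,\infty)$ according to the size $v=\tfrac zx e^{-x(t_k-s)}$ of the corresponding contribution to $X_{+,2}^*(t_k)$. Write $V_k$ for the contribution of the points with $v>\varepsilon t_k^{1/\gamma}$ and $U_k$ for that of the points with $v\le\varepsilon t_k^{1/\gamma}$. For $V_k$: cutting $(\varepsilon t_k^{1/\gamma},\infty)$ into the dyadic shells $D(2^j,2^{j+1},t_k)$ and using Corollary~\ref{cor:nu-r} to bound the Poisson number of points in each shell, one sums the geometric series and finds that the expected number of points with $v>\varepsilon t_k^{1/\gamma}$ is summable over $k$ (the hypothesis $\gamma>1$ makes $2-\gamma-1/\gamma<0$, and for $\gamma=1$ the assumption \eqref{eq:gamma1-ass} turns a logarithmic loss into a power saving); by Borel--Cantelli, these large jumps---and all jumps of any larger power of $t_k$---are eventually absent, so that $V_k=0$ for $k$ large. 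For $U_k$: this is a nonnegative Poisson integral with summands bounded by $\varepsilon t_k^{1/\gamma}$, so I would bound it through its mean and variance. Its mean is at most $\E X_{+,2}^*(t_k)=m_1(\lambda)\int x^{-2}(1-e^{-x t_k})\pi(\dd x)$, which is $o(t_k^{1/\gamma})$---using $1-e^{-u}\le u^{1-\alpha}$, the finiteness of $\int_{(0,1]}x^{-1-\alpha}\pi(\dd x)$ for $\alpha<\alpha_0$, and the constraint $\gamma\le1+\alpha_0<1/(1-\alpha_0)$ forced by \eqref{eq:i1-cond}---and its variance is at most $\varepsilon t_k^{1/\gamma}\,\E X_{+,2}^*(t_k)$, whence Chebyshev makes $\p\bigl(U_k>\sqrt\varepsilon\,t_k^{1/\gamma}\bigr)$ summable over $(t_k)$. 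Combining, $X_{+,2}^*(t_k)=U_k+V_k\le\sqrt\varepsilon\,t_k^{1/\gamma}$ eventually a.s.; letting $\varepsilon\downarrow0$ and inserting this into the reduction step proves the lemma. Throughout, the key auxiliary bound $\nu(D(r,t))\le C\,t^{2-\gamma}/r$---obtained by the same computation as in the proof of Lemma~\ref{lemma:nu-r1r2}, using $\overline\lambda(y)\le m_1(\lambda)/y$ and, after an integration by parts as in Lemma~\ref{lemma:X*-chf}, $\int_{z>y}\log(z/y)\,\lambda(\dd z)=\int_y^\infty z^{-1}\overline\lambda(z)\,\dd z\le m_1(\lambda)/y$---is used to control the shell counts and the truncated variance.

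The hard part will be the bookkeeping in the last step: the spacing of $(t_k)$, the truncation level $\varepsilon t_k^{1/\gamma}$ and the dyadic cut-offs must be chosen simultaneously so that the large-jump counts are summable, the geometric sum of the $O(\log t_k)$ shell-contributions stays below the target order $t_k^{1/\gamma}$---which genuinely requires the full strength of \eqref{eq:i1-cond}, i.e.\ $\int_1^\infty r^\gamma\eta_1(\dd r)<\infty$ and not merely $\overline\eta_1(r)\lesssim r^{-1}$---and there is still room to absorb the continuous-time increment $X_{+,1}^*(t_{k+1})-X_{+,1}^*(t_k)$. For $\gamma$ near $2$ these requirements nearly conflict, forcing a return to integer times together with a sharper large-deviation bound for $X_{+,2}^*(N)$ coming from the same shell decomposition; this is the technically heaviest point, modelled on the Poisson-measure analysis of \cite{CK}.
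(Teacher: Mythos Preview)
Your overall architecture matches the paper's: the $\gamma<1$ case and the interpolation inequality
\[
\sup_{t\in[t_k,t_{k+1}]}X_{+,2}^*(t)\le X_{+,2}^*(t_k)+\bigl(X_{+,1}^*(t_{k+1})-X_{+,1}^*(t_k)\bigr)
\]
are exactly as in the paper, and the idea of slicing by the size of $v=\tfrac{\zeta}{\xi}e^{-\xi(t-\tau)}$ is the heart of both arguments. The gap is that your two-level scheme (Chebyshev on $U_k$, first-moment Borel--Cantelli on $V_k$) is too weak to run on any subsequence compatible with the interpolation step, and this is not a problem only ``near $2$'' but for \emph{every} $\gamma\in(1,2]$. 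Both your $U_k$ bound and your $V_k$ bound decay like $t_k^{-c_0}$ with $c_0=\gamma+\gamma^{-1}-2=(\gamma-1)^2/\gamma$; summability forces $t_k=k^a$ with $a>1/c_0=\gamma/(\gamma-1)^2$. But the interpolation increment carries a deterministic part $\sim m_1(\lambda)m_{-1}(\pi)\,\Delta_k\sim c\,ak^{a-1}$, so it is $o(t_k^{1/\gamma})$ only if $a<\gamma/(\gamma-1)$. Since $\gamma/(\gamma-1)^2\ge\gamma/(\gamma-1)$ for all $\gamma\in(1,2]$, no admissible $a$ exists: the ``main'' argument never gets started, and the fallback you flag in the last paragraph is in fact the whole proof for $\gamma>1$.

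What the paper supplies in place of Chebyshev are two ingredients you do not quite name. First, it works on the integers and replaces second moments by the exponential Poisson tail $\p(N_\lambda\ge x)\le e^{-0.19\,x}$ for $x\ge2\lambda$, applied on a finite ladder of shells $v\in(n^{a_{i+1}},n^{a_i}]$ covering $(1,n^{a_0}]$ (with $a_i-a_{i+1}=c_0/2$) and on dyadic shells on $(0,1]$; this makes each shell probability stretched-exponentially small in $n$ and hence summable. Second, for the top shell $v>n^{a_0}$ (with $a_0$ chosen in $(2-\gamma,\,2-\gamma+c_0/2)$, hence $<1/\gamma$) it does \emph{not} require that there be no points---on integers the expected count $\nu(D(n^{a_0},n))\asymp n^{2-\gamma-a_0}$ is not summable---but instead allows up to a fixed $N$ of them and bounds their total contribution by $N\max_{i\le n}Y_i$, which is $o(n^{1/\gamma})$ because $\E Y_1^\gamma<\infty$. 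These two devices are exactly what converts the polynomial $t^{-c_0}$ loss of your scheme into errors summable over $\mathbb{N}$.
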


\begin{proof}
If $\gamma < 1$ the statement follows from Lemma \ref{lemma:I1} since 
$X_{+,1}^*(t)\geq X_{+,2}^*(t)$. Therefore we assume that $\gamma \geq 1$.

Put $Y_n = \sum_{\tau_k \in [n-1, n)} \frac{\zeta_k}{\xi_k}$. Then 
$Y_1, Y_2, \ldots$ is an iid sequence with the same 
distribution as $X_{+,1}^*(1)$, thus
$\E Y_1^\gamma < \infty$ by \eqref{eq:i1-cond}. Therefore, 
\begin{equation} \label{eq:Ymax-conv}
n^{-1/\gamma} \max_{i \leq n} Y_i \to 0 \quad \text{a.s.}
\end{equation}
(see e.g.~\cite[Theorem 3.5.1]{EKM}). Furthermore, $\max_{\tau_k <n } \frac{\zeta_k}{\xi_k} \leq \max_{i \leq n} Y_i$.

First we prove that 
\begin{equation} \label{eq:I2-conv-subseq}
\lim_{n \to \infty} n^{-1/\gamma} X_{+,2}^*(n) = 0 \quad \text{a.s.}
\end{equation}
Assume that $\gamma > 1$.
Let $c_0 = \gamma + \gamma^{-1} -2 > 0$, and choose $a_0$ so that 
$2 - \gamma  < a_0 < 2 - \gamma + c_0/2$, 
and let $m = \lfloor 2 a_0 / c_0 \rfloor$, with $\lfloor \cdot \rfloor$ 
standing for the lower integer part.
Let $N$ be an integer large enough to have $N (2 - \gamma - a_0) < - 1$.
Define the sets 
\begin{equation} \label{eq:A-sets}
\begin{split}
& A_{n,0} = \{ \mu( D(n^{a_0}, n) ) > N \}, \\
& A_{n, i+1} = \{ \mu ( D(n^{a_{i+1}}, n^{a_i}, n)) > 2 C \log n \, 
n^{2 - \gamma - a_{i+1}} \}, \ i = 0, \ldots, m,
\end{split}
\end{equation}
where $a_{i+1} = a_i - c_0 / 2$ for $i=0,1,\ldots, m - 1$, $a_{m+1} = 0$, and 
$C = C(\pi, \lambda, \gamma)$ is the constant in Corollary \ref{cor:nu-r}.
With this choice $a_m = a_0 - m c_0/2 \in [0, c_0/2)$.

We use the Poisson tail bounds (see \cite[Lemma 3.1]{CK})
\begin{equation} \label{eq:Poisson-tail}
\begin{split}
& \p ( N_\lambda \geq n ) \leq \frac{\lambda^n}{n!}, \\
& \p ( N_\lambda \geq x ) \leq e^{-0.19 \, x}, \quad x \geq 2 \lambda,
\end{split}
\end{equation}
where $N_\lambda$ stands for a Poisson random variable with parameter $\lambda$.
By Corollary \ref{cor:nu-r} (with $a = a_0$ and $b= \infty$) and 
\eqref{eq:Poisson-tail}
\[
\p ( A_{n,0} ) \leq C n^{N(2 - \gamma - a_0)} (\log n)^N,
\]
and, similarly for $i = 1,2,\ldots, m+1$ using the second bound in 
\eqref{eq:Poisson-tail}
\[
\p ( A_{n,i} ) \leq \exp \left\{-0.19 \cdot 2 C \log n \, 
n^{2 - \gamma - a_{i}} \right\}.
\]
Clearly, all these bounds are summable, 
therefore, by the first Borel--Cantelli lemma 
the events $(A_{n,i})_{n \geq 1, i = 0,1,\ldots, m+1}$ 
occur finitely many times almost surely. 

The contribution of the
points $(\xi_k, \tau_k, \zeta_k)$ for which $\zeta_k / \xi_k e^{-\xi_k (n - \tau_k)}
\in (n^{a_{i+1}}, n^{a_i}]$, $\tau_k < n$, if $n$ is large enough so 
that $A_{n,i+1}$ does not occur, is for $i= 0,1,\ldots, m$
\begin{equation} \label{eq:A-contr}
\begin{split}
\sum_{\substack{0<\tau_k < n,\\ \zeta_k / \xi_k e^{-\xi_k (n - \tau_k)} \in (n^{a_{i+1}}, n^{a_i}]}} \frac{\zeta_k}{\xi_k} e^{-\xi_k (n - \tau_k)} 
& \leq n^{a_i} 2 C \log n \, n^{2 - \gamma - a_{i+1}} \\
& \leq 
2 C  \log n \,  n^{2 - \gamma + c_0/2},
\end{split}
\end{equation}
by the choice of $c_0$, where the last inequality is in fact equality for $i \neq m$
(as $a_m - a_{m+1} = a_m \in [0,c_0/2)$).

The events in \eqref{eq:A-sets} take care of the contributions larger than 1.
Next we deal with the small contributions. Introduce the events
\begin{equation} \label{eq:B-sets}
B_{n,k} = \{ \mu( D(2^{-k-1}, 2^{-k}, n) ) > b_{n,k} \}, \quad k= 0,1,\ldots
\end{equation}
with 
\begin{equation*}
\begin{split}
b_{n,k} & = 6 \log (n (k+1)) +  2 n \int_{(0,n^{-1}]} \pi(\dd x) 
\int_{(2^{-k-1} x , \infty)} \lambda(\dd z) \\
& \quad + 2 \int_{(n^{-1}, \infty)} x^{-1} \pi(\dd x) \int_{(2^{-k-1} x, \infty)} 
\lambda(\dd z).
\end{split}
\end{equation*}
By Lemma \ref{lemma:nu-r1r2} and \eqref{eq:Poisson-tail} we have
\[
\p ( B_{n,k} ) \leq \exp \left\{ -6 \cdot 0.19 \cdot  \log (n (k+1)) \right\}
\leq (n (k+1))^{-1.1},
\]
which is summable for $k \geq 0$, $n \geq 1$.
The first Borel--Cantelli lemma implies that 
all these events occur finitely many times a.s. Note that in 
$b_{n,k}$ the term $6 \log (n(k+1))$ ensures the summability of the probabilities even 
if the other terms in $b_{n,k}$ are small.

The sum of small contributions such that 
$\zeta_j / \xi_j e^{-\xi_j (n - \tau_j)} \in (2^{-k-1}, 2^{-k}]$, $\tau_j < n$,
for $k = 0,1, \ldots$ and $n$ large enough so that $B_{n,k}$ does not occur, is bounded by
\[
\begin{split} 
\sum_{k = 0}^\infty 2^{-k} b_{n,k} & \leq 
c \log n + 2 n \int_{(0,n^{-1}]}
\pi (\dd x) \sum_{k=0}^\infty 2^{-k} \int_{(2^{-k-1} x, \infty)} \lambda (\dd z) \\
& \quad +
2 \int_{(n^{-1}, \infty)} x^{-1}
\pi (\dd x) \sum_{k=0}^\infty 2^{-k} \int_{(2^{-k-1} x, \infty)} \lambda (\dd z).
\end{split}
\]
The infinite sum can be bounded as
\[
\begin{split}
\sum_{k=0}^\infty 2^{-k} \int_{(2^{-k-1} x, \infty)} \lambda (\dd z) 
& = \int_{(0,\infty)} \sum_{k=0}^\infty 2^{-k} \bone( z > 2^{-k-1} x ) \lambda(\dd z) \\
& \leq  2 \int_{(0,\infty)} \left( 1 \wedge z x^{-1} \right) \lambda(\dd z) 
\leq  c x^{-1} .
\end{split}
\]
Substituting back, and using  
$\int_{(0,n^{-1}]} x^{-1} \pi (\dd x) \leq n^{1-\gamma}
m_{-\gamma}(\pi)$, and 
\[
\int_{(n^{-1}, \infty)} x^{-2} \pi(\dd x) \leq n^{2 - \gamma} m_{-\gamma}(\pi),
\]
we have
\begin{equation} \label{eq:B-contr}
\sum_{k=0}^\infty 2^{-k} b_{n,k} \leq c \log n + c n^{2- \gamma}.
\end{equation}

Summarizing, by \eqref{eq:A-contr} and \eqref{eq:B-contr} 
the contribution of the terms in $X_{+,2}^*(n)$ can be bounded by
\[
X_{+,2}^*(n) \leq W + N \max_{1 \leq k \leq n} Y_k 
+ c \log n \,  m n^{2 - \gamma + c_0/2}
+ c  n^{2 -\gamma},
\]
where the random variable $W$ is coming from the Borel--Cantelli lemma, and 
the second term comes from $A_{n,0}$.
By \eqref{eq:Ymax-conv} 
all the terms are $o(n^{1/\gamma})$, thus \eqref{eq:I2-conv-subseq} follows.
\smallskip

For $t \in (n, n+1)$, using \eqref{eq:Ymax-conv}
\[
t^{-1/\gamma} X_{+,2}^*(t) \leq n^{-1/\gamma} \left( X_{+,2}^*(n) + Y_{n+1} \right) 
\to 0 \quad \text{a.s.}
\]
as claimed.

\medskip 
For $\gamma = 1$ the same idea works. Recall $\delta  > 0$ from \eqref{eq:gamma1-ass}, 
and let $c_0 = \delta / 2$, $a_0 \in ( 1 - \delta / 2, 1 - \delta /4)$, 
$m = \lfloor 4 a_0 /  \delta \rfloor$.
Choose an integer $N$ such that  $N (1 - a_0 - \delta / 2) < -1$.
Define the events $A_{n,i}$'s similarly as in \eqref{eq:A-sets}, 
\[
\begin{split}
& A_{n,0} = \{ \mu( D(n^{a_0}, n) ) > N \}, \\
& A_{n, i+1} = \{ \mu ( D(n^{a_{i+1}}, n^{a_i}, n)) > 2 C \, 
n^{1 - \delta/2 - a_{i+1}} \}, \ i = 0, \ldots, m,
\end{split}
\]
where $a_{i+1} = a_i - c_0 / 2$ for $i=0,1,\ldots, m - 1$, $a_{m+1} = 0$, 
and $C = C(\pi, \lambda, 1)$ is the constant in Corollary \ref{cor:nu-r}.
As above,
$A_n$'s occur finitely many times a.s., and the contribution is $o(n)$.
For the small jumps define $B_{n,k}$'s as in \eqref{eq:B-sets}. The only
difference is that by assumption \eqref{eq:gamma1-ass}
\[
\int_{(0,n^{-1}]} x^{-1} \pi (\dd x) \leq  n^{-\delta} m_{-1-\delta}(\pi),
\]
and
\[
\int_{(n^{-1}, \infty)} x^{-2} \pi (\dd x) \leq  n^{1-\delta} m_{-1-\delta}(\pi).\qedhere
\]
\end{proof}
\bigskip

\begin{proof}[Proof of Theorem \ref{thm:fv}] \ 
\begin{enumerate}
\item If $\lambda((-\infty,0))=0$, then the proof follows directly from Lemmas \ref{lemma:Xminus-bound}, 
\ref{lemma:I1}, \ref{lemma:I2} and decomposition \eqref{eq:fv-decomposition}. Note that the centering term in \eqref{eq:i1-as} is indeed $\E X^*(t)$ by \eqref{eq:meanofX}. If $\lambda((-\infty,0))\neq 0$, then we decompose $X^*(t)$ as in \eqref{eq:fv-decomposition} but separately for positive and negative jumps, that is considering $\mu\bone(z>0)$ and $\mu\bone(z<0)$ separately. This gives
\begin{equation*}
X^*(t) = X_-^{*,+}(t) - X_-^{*,-}(t) + X_{+,1}^{*,+}(t) - X_{+,1}^{*,-}(t) - X_{+,2}^{*,+}(t) + X_{+,2}^{*,-}(t).
\end{equation*}
For $\gamma \in (0,2)$
the statement now follows by applying Lemmas \ref{lemma:Xminus-bound}, \ref{lemma:I1} and \ref{lemma:I2} to positive and negative jump parts separately. 

\item If $\gamma  = 2$, then the proof follows as for $\gamma \in (0,2)$. In general, we decompose $X_-^{*}(t)$ and $X_{+,2}^{*}(t)$ as in the previous case. When divided by $\sqrt{2 t \log \log t}$ all of these terms converge to 0 a.s. The statement now follows from Proposition 48.9 in \cite{Sato} applied to $X_{+,1}^{*}(t)$.\qedhere
\end{enumerate}
\end{proof}

\section{Proof of Theorem \ref{thm:infv}}\label{sec:proof-infv}

We shall first prove the theorem by assuming that $\lambda$ is supported on $(0,1]$. At the very end we will then combine this with Theorem \ref{thm:fv} to prove  Theorem \ref{thm:infv}. 

If $\int_{(0,1]} z \lambda(\dd z) = \infty$, then centering is needed and we have
\[
X^*(t) = \trint_{(0, \infty)  \times (-\infty, t]  \times (0,1]}
f_t ( x,s) z (\mu - \nu) (\dd x, \dd s , \dd z) =: X_{+}^* (t) - X_{-}^* (t),
\]
where
\begin{align*}
X_{+}^* (t) &= \trint_{(0,\infty) \times (0,t] \times (0,1]}
\frac{z}{x} \left( 1 - e^{-x(t-s)} \right) (\mu - \nu) (\dd x, \dd s, \dd z),\\
X_{-}^* (t) &= \trint_{(0,\infty) \times (-\infty,0] \times (0,1]}
\frac{z}{x} \left( 1 - e^{-xt} \right) e^{xs} (\mu - \nu) (\dd x, \dd s, \dd z).
\end{align*}

The following two statements, Lemmas \ref{lemma:X+diffbound} and \ref{lemma:aux-conv},
ensure that the paths are sufficiently smooth,
so that it is enough to prove almost sure limits on the integers. Then we decompose 
the process further to a part with only small jumps $X_{\pm, <, \kappa}^*$
and with only large jumps $X_{\pm, >, \kappa}^*$. The small jump part 
is analysed via its moment generating function in Corollary \ref{cor:gamma>1},
where the necessary bounds are established in Lemma \ref{lemma:X<momgenfunc}.
This part ruins the optimality, as an extra logarithmic factor appears.
The large jump part is handled in Lemmas \ref{lemma:X>-0} and \ref{lemma:X>betanagy}.
The truncation idea resembles the proof of the classical Kolmogorov and
Marcinkiewicz--Zygmund strong law of large numbers, see e.g.~\cite{Gut}.

\begin{lemma} \label{lemma:X+diffbound}
There exists a modification of $X^*_{\pm}$ such that for any 
$\theta \in [0,1/2)$
\[
\sup_n \E \left( 
\sup_{s \neq t, s,t \in [n,n+1]} 
\frac{|X^*_{+}(t) - X^*_{+}(s)|
+|X^*_{-}(t) - X^*_{-}(s)|}{|t-s|^\theta} \right)^2 < 
\infty.
\]
\end{lemma}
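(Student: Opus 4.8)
The claim is a Kolmogorov-type continuity estimate for the compensated Poisson integrals $X^*_{\pm}$, localized on unit intervals $[n,n+1]$ and uniform in $n$. The natural route is via a Kolmogorov–Chentsov argument: I would first estimate, for $s,t\in[n,n+1]$ with $s<t$, a moment of the increment $X^*_{+}(t)-X^*_{+}(s)$ (and similarly for $X^*_{-}$), show it is bounded by $c\,|t-s|^{p\theta'}$ for suitable $p>2$ and $\theta'\in(\theta,1/2)$ with a constant $c$ independent of $n$, and then invoke the standard chaining/Garsia–Rodemich–Rumsey estimate to convert this into the stated $L^2$ bound on the $\theta$-Hölder seminorm over $[n,n+1]$. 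Since the increments are stationary in a suitable sense (the Poisson random measure $\mu$ is homogeneous in $s$), the $n$-independence of the constant should come out automatically once the increment is written in a translation-invariant form.

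The heart of the matter is the increment bound. Writing, for $s<t$ in $[n,n+1]$,
\[
X^*_{+}(t)-X^*_{+}(s) = \trint_{(0,\infty)\times(-\infty,t]\times(0,1]} g_{s,t}(x,u)\,z\,(\mu-\nu)(\dd x,\dd u,\dd z),
\]
where $g_{s,t}(x,u)=f_t(x,u)-f_s(x,u)$, one computes $g_{s,t}$ explicitly: for $u\le s$ it equals $x^{-1}e^{xu}(e^{-xs}-e^{-xt})$, for $s<u\le t$ it equals $x^{-1}(1-e^{-x(t-u)})$, and it vanishes for $u>t$; in all cases $|g_{s,t}(x,u)|\le t-s$ uniformly (using $1-e^{-y}\le y$ and $e^{-xs}-e^{-xt}\le x(t-s)e^{-x(u \wedge s)}$ after the change of variables, but more simply $|g_{s,t}|\le f_t-f_s \le t-s$ pointwise since $\partial_t f_t(x,u)=e^{-x(t-u)}\bone(u\le t)\in[0,1]$). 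Then I would bound a moment: for $p=2$ the variance is $\trint g_{s,t}(x,u)^2 z^2\,\pi(\dd x)\dd u\,\lambda(\dd z)$, which, since $\lambda$ is supported on $(0,1]$ so $m_2(\lambda)<\infty$, reduces to controlling $\int_{(0,\infty)}\int_{\R} g_{s,t}(x,u)^2\,\dd u\,\pi(\dd x)$. Splitting the $u$-integral over $(-\infty,s]$ and $(s,t]$ and using $|g_{s,t}|\le t-s$ on the short interval and the exponential decay on the long one, this integral is $O((t-s)\,m_{-1-\varepsilon}(\pi)^{?})$ — more precisely it behaves like $(t-s)^{2-\text{something}}$; one needs a genuinely superlinear power of $(t-s)$, so a second moment alone gives only $\theta<1/2$ in a borderline way and I would instead take a higher even moment $p=2\lceil 1/(1-2\theta)\rceil$ and use the moment formula for Poisson integrals (or the Rosenthal/BDG inequality for compensated Poisson integrals) to get $\E|X^*_{+}(t)-X^*_{+}(s)|^{p}\le c\,(t-s)^{1+\epsilon}$ with $c$ depending only on $\pi,\lambda,p$; the integrability $\int_{(0,1)}x^{-1-\varepsilon}\pi(\dd x)$-type conditions implicit in the theorem's hypotheses (and used throughout Section~\ref{sec:proof-infv}) are what make the $x$-integrals converge.

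The main obstacle is precisely getting a \emph{strictly superlinear} power of $|t-s|$ in the increment moment while keeping the $\pi$-integral finite — the $u$-integration near $u\approx s$ only gives one factor of $(t-s)$ from the length of the interval, and the $x$-side contributes no extra smallness unless one exploits that on $(s,t]$ the weight $x^{-1}(1-e^{-x(t-u)})\le (t-u)$, which after integrating in $u$ gives $(t-s)^2$ but only for $x$ bounded away from $0$, whereas for small $x$ one must be more careful and use $x^{-1}(1-e^{-x(t-u)})\le x^{-1}\wedge(t-u)$. Carefully splitting the $x$-range at $x\sim (t-s)^{-1}$ and tracking both regimes should yield $\int g_{s,t}^2\,\dd u\,\pi(\dd x)\le c\,(t-s)\,\log\tfrac{1}{t-s}$ at worst, hence an $L^p$ increment bound with exponent $1+\eta$ for $p$ large, which suffices; then Kolmogorov–Chentsov applied on each $[n,n+1]$ with the \emph{same} constant (by stationarity of $\mu$ under shifts in $u$) gives the modification and the uniform bound. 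I would present the two $X^*_\pm$ parts in parallel since the negative-time part $X^*_{-}$ is strictly easier (the weight $x^{-1}(1-e^{-xt})e^{xu}$ with $u\le 0$ decays and the $t$-dependence is through $e^{-xt}$, giving increments bounded by $x(t-s)e^{-x|u|}$-type quantities that integrate nicely against $\pi$).
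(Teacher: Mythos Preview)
Your overall strategy---a Kolmogorov--Chentsov argument via increment moment bounds, with constants uniform in $n$ by translation invariance---is exactly what the paper does. The difference is that you overcomplicate the moment estimate: the paper shows that the \emph{second} moment already satisfies
\[
\E\bigl(X^*_{+}(t)-X^*_{+}(s)\bigr)^2 \le 3\,m_2(\lambda)\,m_{-1}(\pi)\,(t-s)^2,
\]
and likewise for $X^*_{-}$, so Kolmogorov's theorem with $p=2$ immediately yields any H\"older exponent $\theta<1/2$. Your worry that the second moment is ``borderline'' and produces only $(t-s)\log\tfrac{1}{t-s}$ is a miscalculation. On the short interval $u\in(s,t]$ you should not use merely $|g_{s,t}|\le t-s$ (which loses the $x$-dependence and forces an $m_0(\pi)$ that need not be finite), but rather
\[
x^{-1}\bigl(1-e^{-x(t-u)}\bigr)\le x^{-1}\bigl(x(t-s)\wedge 1\bigr).
\]
Squaring, integrating over $u\in(s,t]$, and using the elementary bound $x^{-2}(x(t-s)\wedge 1)^2\le x^{-1}(t-s)$ gives a contribution $\le (t-s)^2 x^{-1}$; the $\pi$-integral then requires only the standing assumption $m_{-1}(\pi)<\infty$. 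The $u\in(0,s]$ piece is handled in the same spirit via $(1-e^{-x(t-s)})^2\le x^2(t-s)^2$ and the exponential $\dd s$-integral. No higher moments, no BDG/Rosenthal inequality, and no extra integrability like $m_{-1-\varepsilon}(\pi)<\infty$ are needed.
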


\begin{proof}
For $t_1 < t_2$ we have
\[
\begin{split}
& X^*_{+}(t_2) - X^*_{+}(t_1) \\
& = 
\trint_{(0,\infty) \times (0,t_1] \times (0,1]} \frac{z}{x} 
\left( e^{-x(t_1 - s)} - e^{-x(t_2 -s)} \right) (\mu - \nu) (\dd x, \dd s, \dd z) \\
& \quad + 
\trint_{(0,\infty) \times (t_1, t_2] \times (0,1] } \frac{z}{x} 
\left( 1 -  e^{-x(t_2 -s)} \right) (\mu - \nu) (\dd x, \dd s, \dd z) ,
\end{split}
\]
therefore, by Theorem 1 in \cite{MR} 
\[
\begin{split} 
& \E \left( X^*_{+}(t_2) - X^*_{+}(t_1) \right)^2 \\
& \leq  2
\trint_{(0,\infty) \times (0,t_1] \times (0,1]} \frac{z^2}{x^2} 
e^{-2x(t_1 - s)} \left( 1 - e^{-x (t_2 - t_1) } \right)^2 \nu(\dd x, \dd s, \dd z) \\
& \quad 
+ 2 \trint_{(0,\infty) \times (t_1, t_2] \times (0,1]} \frac{z^2}{x^2} 
\left( 1 -  e^{-x(t_2 -s)} \right)^2  \nu (\dd x, \dd s, \dd z) =: I_{1} + I_{2}. 
\end{split}
\]
Writing $\Delta t = t_2 - t_1$, evaluating the $\dd s$ integral, and using that $1 - e^{-y} \leq y$, 
the first term on the right-hand side can be bounded as
\[
\begin{split}
I_{1} & \leq  
2 (\Delta t)^2  \dint_{(0,\infty) \times (0,1]} \frac{z^{2}}{2x} (1 - e^{-2xt_1})  
\lambda(\dd z) \pi(\dd x) 
\leq m_2(\lambda) m_{-1}(\pi)  (\Delta t)^2 .
\end{split}
\]
For the second term we have 
\[
\begin{split}
I_{2} & = 2 m_2(\lambda) \int_{(0,\infty)} x^{-2} \pi(\dd x) 
\int_0^{\Delta t} ( 1 - e^{-xu} )^2 \dd u \\
& \leq 2 m_2(\lambda)
\Delta t \int_{(0,\infty)} x^{-2} ( x \Delta t  \wedge 1)^2 \pi(\dd x) \\
& \leq 2 m_2(\lambda) m_{-1}(\pi) (\Delta t)^2.
\end{split}
\]
Summarizing
\begin{equation} \label{eq:X+diffbound}
\E ( X^*_{+}(t_2) - X^*_{+}(t_1))^2 \leq 
3 m_2(\lambda) m_{-1}(\pi) (\Delta t)^2.
\end{equation}
Similarly, for $X_{-}^*$ we have
\begin{equation} \label{eq:X-diffbound}
\begin{split}
& \E \left( X^*_{-}(t_2) - X^*_{-}(t_1) \right)^2 \\
& \leq  2
\trint_{(0,\infty) \times (-\infty,0] \times (0,1]} \frac{z^2}{x^2} e^{2 x s} 
e^{-2xt_1} \left( 1 - e^{-x \Delta t } \right)^2 \nu(\dd x, \dd s, \dd z) \\
& = m_2(\lambda) \int_{(0,\infty)} \frac{1}{x^3} e^{-2 x t_1} 
\left( 1 - e^{-x \Delta t } \right)^2 \pi(\dd x ) \\
& \leq m_2(\lambda) m_{-1}(\pi) (t_2-t_1)^2.
\end{split}
\end{equation}

The claim follows from a version of Kolmogorov's continuity theorem 
(\cite[Theorem 4.3]{Khos09}
with $[a,b] = [n,n+1]$, $\gamma = p = 2$, $\theta < 1/2$).
Note that \eqref{eq:X+diffbound} and \eqref{eq:X-diffbound} hold for any $t_1, t_2 \in (0,\infty)$, which 
implies that the bound above is uniform in $n$.
\end{proof}

\begin{lemma} \label{lemma:aux-conv}
Assume that for a process $(Y(t))_{t \in \R}$
\[
\sup_{n\in \N} \E \left( \sup_{t \in [n,n+1]} | Y(t)  - Y(n) | \right)^2 < \infty,
\]
and for an increasing function $a(t)$ for which 
\[
\sum_n a(n)^{-2} < \infty, \quad \text{and } \ 
\lim_{n \to \infty} \tfrac{a(n)}{a(n+1)} = 1,
\]
we have $\limsup_{n\to \infty} |Y(n)|  / a(n) \leq c$ a.s., with some $c \geq 0$. Then 
\[
\lim_{t \to \infty} \frac{|Y(t)|}{a(t)} \leq c \quad \text{a.s.}
\]
\end{lemma}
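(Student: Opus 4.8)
The plan is to transfer the almost sure bound from the integer times to all real times by controlling the oscillation of $Y$ over each unit interval. Set
\[
M_n = \sup_{t \in [n,n+1]} |Y(t) - Y(n)|, \qquad n \in \N,
\]
so that the first hypothesis reads $C := \sup_n \E M_n^2 < \infty$. For $t \in [n,n+1]$, the triangle inequality together with the monotonicity of $a$, which gives $a(t) \geq a(n)$, yields
\[
\frac{|Y(t)|}{a(t)} \leq \frac{|Y(n)| + M_n}{a(n)} = \frac{|Y(n)|}{a(n)} + \frac{M_n}{a(n)} .
\]
Since every $t \to \infty$ lies in $[n,n+1]$ with $n = \lfloor t \rfloor \to \infty$, the statement reduces to showing $M_n / a(n) \to 0$ almost surely: taking $\limsup$ in the display above and invoking the hypothesis $\limsup_n |Y(n)|/a(n) \leq c$ then gives $\limsup_{t \to \infty} |Y(t)|/a(t) \leq c$.

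To establish $M_n/a(n) \to 0$ a.s., I would fix $\varepsilon > 0$ and apply Chebyshev's inequality to obtain
\[
\p\left( M_n > \varepsilon\, a(n) \right) \leq \frac{\E M_n^2}{\varepsilon^2 a(n)^2} \leq \frac{C}{\varepsilon^2}\, a(n)^{-2}.
\]
Since $\sum_n a(n)^{-2} < \infty$ by assumption, the first Borel--Cantelli lemma shows that, almost surely, $M_n \leq \varepsilon\, a(n)$ for all sufficiently large $n$. Intersecting these probability-one events over a sequence $\varepsilon_k \downarrow 0$ then yields $\limsup_n M_n/a(n) = 0$ a.s., completing the proof.

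The argument is essentially routine; the only step that calls for a little care is deducing $M_n/a(n) \to 0$ a.s.\ from the family of events, one for each $\varepsilon > 0$, on which $M_n \leq \varepsilon a(n)$ holds eventually --- this requires a countable intersection over $\varepsilon_k \downarrow 0$. It may be worth noting that the conclusion uses only the monotonicity of $a$ and the summability $\sum_n a(n)^{-2} < \infty$; the hypothesis $a(n)/a(n+1) \to 1$ is not actually needed here.
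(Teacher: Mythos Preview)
Your proof is correct and follows exactly the same approach as the paper: bound $\p(M_n > \varepsilon\, a(n))$ via Chebyshev/Markov, apply the first Borel--Cantelli lemma using $\sum_n a(n)^{-2} < \infty$, and combine with the integer-time bound via the triangle inequality and monotonicity of $a$. Your observation that the hypothesis $a(n)/a(n+1) \to 1$ is never used is also correct.
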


\begin{proof}
By Markov's inequality
\[
\p \left( \sup_{t \in [n,n+1] } | Y(t) - Y(n) | > a(n) \varepsilon \right)
\leq \frac{\E \left( \sup_{t \in [n,n+1]} | Y(t)  - Y(n) | \right)^2}
{a(n)^2 \varepsilon^2}, 
\]
thus the first Borel-Cantelli lemma implies the statement.
\end{proof}

We shall frequently use the following estimates
\begin{equation} \label{eq:int-pi-lambda-bounds}
\begin{split}
& \int_{(y,1]} z^{u} \lambda(\dd z) \leq 
y^{u-\beta} \int_{(0,1]} z^\beta \lambda(\dd z), \quad u \leq \beta, \\
& \int_{(0, y]} z^{u} \lambda(\dd z) \leq 
y^{u-\beta} \int_{(0,1]} z^\beta \lambda(\dd z), \quad u \geq  \beta, \\
& \int_{(y,1]} x^{-v} \pi(\dd x) \leq 
y^{1+\alpha - v} \int_{(0,1]} x^{-1-\alpha} \pi(\dd x), \quad v \geq 1 + \alpha, \\
& \int_{(0, y]} x^{-v} \pi(\dd x) \leq 
y^{1+\alpha - v} \int_{(0,1]} x^{-1-\alpha} \pi(\dd x), \quad v \leq 1 + \alpha,
\end{split}
\end{equation}
with $\alpha$ and $\beta$ defined in the paragraph below \eqref{eq:BG}.

We decompose $X_+^*$ and $X_{-}^*$ further. For $\kappa \in (0,1]$, put
\begin{equation*}
\begin{split}
A(t; \kappa) & = 
\left\{ (x,s,z): \, \frac{z}{x} \left( 1 - e^{-x(t-s)} \right) 
\leq t^{\kappa}, s \in (0,t], x > 0, z \in (0,1] \right\}, \\
B(t; \kappa) & = 
\left\{ (x,s,z): \, \frac{z}{x} e^{xs} \left( 1 - e^{-xt} \right) 
\leq t^{\kappa}, s \leq 0, x > 0, z \in (0,1] \right\},
\end{split}
\end{equation*}
and define the truncated processes
\begin{equation*}
\begin{split}
& X_{+,<,\kappa }^*(t) = \trint_{A(t;\kappa)}
\frac{z}{x} \left( 1 - e^{-x(t-s)} \right) 
(\mu - \nu)(\dd x, \dd s, \dd z), \\
& X_{+,>,\kappa}^*(t) = \trint_{A(t;\kappa)^c}
\frac{z}{x} \left( 1 - e^{-x(t-s)} \right) 
(\mu - \nu)(\dd x, \dd s, \dd z),
\end{split}
\end{equation*}
and 
\begin{equation*}
\begin{split}
& X_{-,<,\kappa}^*(t) = \trint_{B(t;\kappa)}
\frac{z}{x} e^{xs} ( 1 - e^{-xt}) 
(\mu - \nu) (\dd x, \dd s, \dd z), \\
& X_{-,>,\kappa}^*(t) = \trint_{B(t;\kappa)^c}
\frac{z}{x} e^{xs} ( 1 - e^{-xt}) 
(\mu - \nu) (\dd x, \dd s, \dd z).
\end{split}
\end{equation*}
Here, to ease notation the complement is understood as
$A(t;\kappa)^c = (0,\infty) \times (0,t] \times (0,1] \backslash A(t;\kappa)$,
and 
$B(t;\kappa)^c = (0,\infty) \times (-\infty,0] \times (0,1] \backslash B(t;\kappa)$.
Note that the truncated processes do depend on $\kappa$.
The truncation level $\kappa = 1$ means no truncation, that is $X_{\pm, <, 1}^*(t) = X_\pm^*(t)$.

\begin{lemma} \label{lemma:X<momgenfunc}
For $|\theta| \leq 4t^{-\kappa}$
\[
\log \E e^{\theta X_{+, <, \kappa}^*(t)} \leq
\begin{cases}
c \theta^2 t^{ \beta- \alpha + \kappa (2-\beta)}, & \text{if } \ 
\alpha \leq \beta -1, \\
c \theta^2 t^{1 + \kappa (1-\alpha)_+ }, & \text{if } \   
\alpha \geq \beta - 1,
\end{cases}
\]
and
\[
\log \E e^{\theta X_{-, <, \kappa}^*(t)} \leq
\begin{cases}
c \theta^2 t^{ \beta- \alpha + \kappa (2-\beta)}, 
& \text{if } \ \alpha \leq \beta, \\
c \theta^2 t^{\kappa ( 2- \alpha)_+}, & \text{if } \ \alpha \geq \beta.
\end{cases}
\]
\end{lemma}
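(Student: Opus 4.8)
The plan is to bound the moment generating function of the truncated Poisson integral $X_{+,<,\kappa}^*(t)$ directly from its L\'evy--It\^o structure. Since $X_{+,<,\kappa}^*(t)$ is a compensated integral against the Poisson random measure $\mu$ restricted to the set $A(t;\kappa)$, its cumulant generating function is
\[
\log \E e^{\theta X_{+,<,\kappa}^*(t)} = \trint_{A(t;\kappa)} \left( e^{\theta g_t(x,s,z)} - 1 - \theta g_t(x,s,z) \right) \nu(\dd x,\dd s,\dd z),
\]
where $g_t(x,s,z) = \tfrac{z}{x}(1-e^{-x(t-s)})$. On $A(t;\kappa)$ we have $0 \le g_t \le t^\kappa$, so for $|\theta| \le 4 t^{-\kappa}$ the exponent $|\theta g_t| \le 4$ is bounded, and the elementary inequality $e^u - 1 - u \le c u^2$ for $|u|\le 4$ gives $\log \E e^{\theta X_{+,<,\kappa}^*(t)} \le c\theta^2 \int_{A(t;\kappa)} g_t(x,s,z)^2 \,\nu(\dd x,\dd s,\dd z)$. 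So the whole problem reduces to estimating the truncated second moment $\int_{A(t;\kappa)} g_t^2 \,\dd\nu$.

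Next I would compute that integral by first integrating out $s$. Writing $u = t-s \in [0,t)$ and using Fubini, $\int_0^t (1-e^{-xu})^2 \,\dd u \le t \wedge (x^2 t^3/3)$ (more precisely, bounding $1-e^{-xu} \le xu \wedge 1$), one gets after the $\dd s$-integration a bound of the form $\int_{(0,\infty)} x^{-2}(t \wedge c x^{-1}) \,\pi(\dd x) \int_{(0,1]} z^2 \mathbf{1}(\text{truncation}) \,\lambda(\dd z)$, roughly. The truncation constraint $g_t \le t^\kappa$ translates, after the $s$-integration is absorbed, into a coupling between the ranges of $x$ and $z$: small $x$ (where $1-e^{-x(t-s)} \approx xu$ can be close to $1$) forces $z/x \lesssim t^\kappa$, while large $x$ gives $1-e^{-x(t-s)}\approx 1$ so again $z \lesssim x t^\kappa$. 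I would split the $x$-integral at the natural scales $x \sim t^{-1}$ and $x \sim t^{\kappa-1}$ (or $t^{-\kappa}$), and on each piece use the power bounds \eqref{eq:int-pi-lambda-bounds} with the indices $\alpha$ and $\beta$ to evaluate $\int z^u \lambda(\dd z)$ and $\int x^{-v}\pi(\dd x)$. The two regimes $\alpha \le \beta-1$ versus $\alpha \ge \beta-1$ arise from which of the competing power-of-$t$ contributions (the small-$x$ tail governed by $\alpha$ versus the $z$-integral governed by $\beta$) dominates; tracking the exponents carefully yields $t^{\beta-\alpha+\kappa(2-\beta)}$ in the first case and $t^{1+\kappa(1-\alpha)_+}$ in the second. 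The estimate for $X_{-,<,\kappa}^*$ is entirely analogous: there $u = -s \ge 0$ ranges over all of $[0,\infty)$, the extra factor $e^{2xs}e^{-2xt_1}$-type decay integrates to $x^{-3}$ rather than $x^{-2}$, which shifts the relevant threshold and produces the stated exponents $t^{\beta-\alpha+\kappa(2-\beta)}$ for $\alpha\le\beta$ and $t^{\kappa(2-\alpha)_+}$ for $\alpha\ge\beta$.

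The main obstacle is bookkeeping: correctly implementing the truncation set $A(t;\kappa)$ inside the double integral $\int x^{-2}\dd\pi \int z^2 \dd\lambda$, since the constraint $g_t \le t^\kappa$ is not a product set in $(x,z)$ once $s$ has been integrated out. The cleanest route is to bound $g_t^2 \le (t^\kappa)^{2-p} g_t^{p}$ on $A(t;\kappa)$ for a suitable exponent $p \in (0,2)$ chosen so that $\int g_t^p \,\dd\nu$ is finite and scales correctly — effectively interpolating between the crude truncation bound and the integrability afforded by the L\'evy measure — or, alternatively, to partition dyadically in $z/x$ and count as in Lemma~\ref{lemma:nu-r1r2}. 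Once the right exponent $p$ and the right $x$-splitting points are identified, the remaining computations are routine applications of \eqref{eq:int-pi-lambda-bounds}, and the $(\cdot)_+$ in the exponents reflects that when $1-\alpha < 0$ (resp.\ $2-\alpha<0$) the $x$-integral near $0$ converges without help from $\kappa$, so no $\kappa$-dependent growth is incurred there.
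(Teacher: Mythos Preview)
Your reduction is exactly the paper's: write the cumulant generating function as $\int (e^{\theta g_t}-1-\theta g_t)\,\dd\nu$ over $A(t;\kappa)$, use $e^u-1-u\le 4u^2$ for $|u|\le 4$, and reduce to bounding $\int_{A(t;\kappa)} g_t^2\,\dd\nu$. So the structure is correct.

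Where your proposal drifts from the paper is in how to handle the truncation. You correctly flag that the constraint $g_t\le t^\kappa$ is not a product set in $(x,z)$ after the $s$-integration, and you propose either an interpolation $g_t^2\le (t^\kappa)^{2-p}g_t^{p}$ or a dyadic decomposition. Both are plausible in principle but neither is what the paper does, and your sketch of the direct route is slightly off: you split on ``small $x$'' versus ``large $x$'', but the natural split is on $xu=x(t-s)\lessgtr 1$, not on $x$ alone. On $\{xu\le 1\}$ one has $1-e^{-xu}\le xu$, so $g_t\le zu$ and the truncation becomes $zu\le 2t^\kappa$ (a constraint in $(z,u)$, with $x$ decoupled except for $x\le u^{-1}$); on $\{xu>1\}$ one has $1-e^{-xu}\le 1$, so $g_t\le z/x$ and the truncation becomes $z\le 2t^\kappa x$. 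In each piece the constraint is now a simple product-type region, and the integrals are evaluated directly via \eqref{eq:int-pi-lambda-bounds} after further splitting $u$ at $2t^\kappa$ (in $I_1$) and $x$ at $t^{-\kappa}/2$ (in $I_2$). No interpolation or dyadic partition is needed. For $X_{-,<,\kappa}^*$ the split is on $xt\lessgtr 1$, and in the $J_2$ piece one integrates $e^{2xs}$ in $s$ to get the extra $x^{-1}$ you allude to; this is what shifts the threshold from $\alpha=\beta-1$ to $\alpha=\beta$.

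So: the idea is right, but the execution you describe would be more laborious than necessary, and the parenthetical ``small $x$ \ldots forces $z/x\lesssim t^\kappa$'' has the regimes muddled. The clean move you are missing is to split on $x(t-s)\lessgtr 1$ rather than on $x$.
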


\begin{proof}
For the moment generating function we have
\[
\E e^{\theta X_{\pm, <, \kappa}^*(t)} = \exp \left\{ 
\int \left( e^{\theta y} - 1 - \theta y \right) \Pi_t^{\pm}(\dd y)
\right\},
\]
where the L\'evy measures $\Pi_t^{\pm}$ are given by
\[
\begin{split}
\Pi_t^+(D) & = \nu \left( 
\left\{ (x,s,z) \in A(t;\kappa) : \, 
\frac{z}{x} \left( 1 - e^{-x(t-s)} \right) \in D \right\}
\right), \\
\Pi_t^-(D) & = \nu \left( 
\left\{ (x,s,z) \in B(t;\kappa) : \, 
\frac{z}{x} e^{xs} ( 1 - e^{-xt} ) \in D \right\}
\right),
\end{split}
\]
for any Borel set $D$.
The moment generating function is finite 
for any $\theta \in \R$, see e.g.~\cite[Section 26]{Sato}.
Using the inequality $e^{u} - 1 - u \leq 4u^2$ for $|u| \leq 4$, 
and that $\Pi_t^+((t^\kappa, \infty)) = 0$, 
we obtain for $|t^{\kappa} \theta | \leq 4$
\[
\begin{split}
& \log  \E e^{\theta X_{+, <, \kappa}^*(t) }  \leq 
\int 4\theta^2 y^2 \Pi_t^+(\dd y)\\
&= \trint_{(0,\infty) \times (0,t] \times (0,1]} 4\theta^2 \frac{z^2}{x^2} 
( 1 - e^{-xu})^2 
\bone \left( \frac{z}{x} \left( 1 - e^{-xu} \right) 
\leq t^{\kappa} \right) \nu(\dd x, \dd u, \dd z) \\
& \leq  4\theta^2 
\trint_{(0,\infty) \times (0,t] \times (0,1]} 
\bigg[
\bone( xu \leq 1, zu \leq 2 t^\kappa ) z^2 u^2 \\
& \hspace{4.5cm}
+ \bone \left( xu > 1, z \leq 2 t^\kappa x \right)
z^2 x^{-2} \bigg] \nu(\dd x, \dd u, \dd z) \\
& =: 4\theta^2 (I_1 + I_2).
\end{split}
\]

For $I_1$ we have from \eqref{eq:int-pi-lambda-bounds}
\[
\begin{split}
I_1 & \leq \int_{0}^1 u \dd u \int_{(0,1]} z^2 \lambda(\dd z) m_{-1}(\pi) + 
\int_{1}^t u^2 \dd u \int_{(0,u^{-1}]} \pi(\dd x) 
\int_{(0, 2t^\kappa/u]} z^2 \lambda(\dd z) \\
& \leq c \left( 1 + \int_1^{2t^\kappa} u^{1-\alpha} \dd u + 
\bone(2t^\kappa < t)
\int_{2t^\kappa}^t u^{1-\alpha + \beta -2} t^{\kappa(2-\beta)} \dd u \right) \\
& \leq 
c \, \Big( 1 + \bone( \alpha < 2) t^{\kappa ( 2-\alpha)}  + 
\bone ( \alpha  =2) \log t \\ 
& \qquad + [ \bone(\kappa < 1, \beta = \alpha) \log t  + t^{(\beta - \alpha)_+} ]
t^{\kappa (2 - \beta)} \Big),
\end{split}
\]
and for $I_2$
\[
\begin{split}
I_2 & \leq \int_{(t^{-1}, \infty)} t x^{-2}  \pi (\dd x) 
\int_{(0, 2t^\kappa x]}  z^2 \lambda(\dd z) \\
& \leq  c \bone( 2t^\kappa < t) t^{1 + \kappa(2-\beta)} \int_{(t^{-1}, t^{-\kappa}/2]} 
x^{-\beta} \pi(\dd x) + 
c t \int_{(t^{-\kappa}/2, \infty)} x^{-2} \pi(\dd x)  \\
& \leq 
c \left( \bone ( 1 + \alpha \geq \beta) t^{ 1 + \kappa(1-\alpha) } +
\bone ( 1 + \alpha \leq \beta) t^{ \beta - \alpha + \kappa(2-\beta) } 
+ t^{ 1 + \kappa(1-\alpha)_+ } \right) \\
& \leq 
\begin{cases}
c t^{\beta - \alpha + \kappa(2-\beta)}, & \alpha \leq \beta -1,\\
c t^{1 + \kappa (1-\alpha)_+}, & \alpha \geq \beta -1 , 
\end{cases}
\end{split}
\]
proving the first statement.

Similarly, for $X_{-,<,\kappa}^*$ if $|t^{\kappa} \theta | \leq 4$
\[
\begin{split}
& \log  \E e^{\theta X_{-, <, \kappa}^*(t) } \leq 
\int 4\theta^2 y^2 \Pi_t^-(\dd y) \\
& = \trint 4\theta^2 \frac{z^2}{x^2} 
e^{2xs} ( 1 - e^{-xt})^2 
\bone \left( B(t; \kappa) \right) \nu(\dd x, \dd s, \dd z) \\
& \leq  4\theta^2 
\trint_{(0,\infty) \times (-\infty,0] \times (0,1]} 
\bigg[
\bone( xt \leq 1, e^{xs} \leq 2 t^{\kappa-1} z^{-1} ) z^2 t^2 
e^{2xs} \\
& \hspace{3.5cm} 
+ \bone \left( xt > 1, e^{xs} \frac{z}{x} \leq 2 t^\kappa \right)
z^2 x^{-2} e^{2xs} \bigg] \nu(\dd x, \dd s, \dd z) \\
& =: 4\theta^2 (J_1 + J_2).
\end{split}
\]
Integrating with respect to $s$, again by \eqref{eq:int-pi-lambda-bounds} we get
\[
\begin{split}
J_1 & =  t^2 \int_{(0, t^{-1}]} x^{-1} \pi (\dd x) 
\left[ \int_{(0, 2t^{\kappa-1}]} \frac{1}{2} z^2 \lambda(\dd z) + 
\int_{(2 t^{\kappa-1}, 1]} 2 t^{2(\kappa-1)} \lambda(\dd z) \right] \\
& \leq  c t^{2 - \alpha} \left[ t^{ -  ( 2 - \beta) (1-\kappa)} + 
t^{ - (2- \beta) ( 1 -\kappa) } \right]
= c t^{\beta - \alpha + \kappa ( 2 - \beta)},
\end{split}
\]
and
\[
\begin{split}
J_2 & = \int_{(t^{-1}, \infty)} \frac{1}{2x^{3}} \pi(\dd x)
\int_{(0, 1] } z^2 \lambda(\dd z) 
\left[ \bone ( 2 t^\kappa x  \geq z ) + 
\frac{(2 t^\kappa x)^2}{z^{2}} \bone ( 2 t^\kappa x  < z ) \right] \\
& \leq c \bone(2t^\kappa < t) \int_{(t^{-1}, t^{-\kappa}/2]} x^{-3} 
(t^\kappa x)^{2 - \beta} \pi ( \dd x) +
c \int_{(t^{-\kappa}/2, \infty)} x^{-3} \pi ( \dd x) \\
& \leq c \left( \bone ( \alpha \leq \beta) t^{\beta - \alpha + \kappa(2-\beta)} + 
\bone (\alpha \geq \beta) t^{\kappa(2-\alpha)_+} 
+ t^{\kappa (2-\alpha)_+}  \right).\qedhere
\end{split}
\]
\end{proof}

\begin{corollary} \label{cor:gamma>1}
For any $\kappa \in [1/2,1]$ such that
\begin{equation*}
\kappa \geq \begin{cases}
1 - \tfrac{\alpha}{\beta}, & \text{if } \ \alpha \leq \beta -1, \\
\tfrac{1}{1 + \alpha}, & \text{if } \  \beta - 1  \leq \alpha \leq 1, \\
\tfrac{1}{2}, & \text{if } \   \alpha \geq 1, \\
\end{cases}
\end{equation*}
it holds that
\[
\limsup_{t \to \infty} \frac{|X^*_{\pm, <, \kappa}(t)|}{t^\kappa \log t}  \leq \frac{1}{4} \quad \text{a.s.}
\]
\end{corollary}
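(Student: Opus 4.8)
The plan is to control $X_{\pm,<,\kappa}^*$ along the integers by a Chernoff bound built from the moment generating function estimates of Lemma~\ref{lemma:X<momgenfunc}, and then to fill the gaps between consecutive integers using Lemmas~\ref{lemma:X+diffbound} and \ref{lemma:aux-conv}.

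First I would fix $\kappa$ as in the statement and write $a$ for the exponent appearing in the appropriate line of Lemma~\ref{lemma:X<momgenfunc}, so that $\log\E e^{\theta X_{\pm,<,\kappa}^*(t)}\le c\theta^2 t^{a}$ for $|\theta|\le 4t^{-\kappa}$, with $a\in\{\beta-\alpha+\kappa(2-\beta),\,1+\kappa(1-\alpha)_+\}$ in the $+$-case and $a\in\{\beta-\alpha+\kappa(2-\beta),\,\kappa(2-\alpha)_+\}$ in the $-$-case. The first thing to check is that each branch of the hypothesis on $\kappa$ forces $a\le 2\kappa$: for $\alpha\le\beta-1$ the inequality $\beta-\alpha+\kappa(2-\beta)\le 2\kappa$ is exactly $\kappa\ge 1-\alpha/\beta$; for $\beta-1\le\alpha\le 1$ one uses $\kappa\ge 1/(1+\alpha)$ for the term $1+\kappa(1-\alpha)$ and notes $1/(1+\alpha)\ge 1-\alpha/\beta$ (equivalent to $\alpha(\beta-1-\alpha)\le 0$, which holds since $\alpha\ge\beta-1$) for the remaining one; for $\alpha\ge 1$ one has $(1-\alpha)_+=0$ and then $\kappa\ge 1/2$ together with $\beta/2\le 1\le\alpha$ and $\beta\le 2$ suffices. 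With $a\le 2\kappa$ secured, I would take $\theta=\pm 4t^{-\kappa}$, the extreme admissible value, so that $\log\E e^{\pm 4t^{-\kappa}X_{\pm,<,\kappa}^*(t)}\le 16c\,t^{a-2\kappa}\le 16c$ uniformly in $t$, and the exponential Markov inequality gives
\[
\p\!\left(\pm X_{\pm,<,\kappa}^*(n)>C n^\kappa\log n\right)\le e^{16c}\,n^{-4C},\qquad C>0.
\]
These probabilities are summable as soon as $C>1/4$, so the first Borel--Cantelli lemma yields $\limsup_{n\to\infty}|X_{\pm,<,\kappa}^*(n)|/(n^\kappa\log n)\le 1/4$ a.s., running the estimate for all four choices of overall sign and of the $\pm$-process.

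To pass from the integers to all $t$ I would apply Lemma~\ref{lemma:aux-conv} with $Y=X_{\pm,<,\kappa}^*$, $a(t)=t^\kappa\log t$ and $c=1/4$: the requirement $\sum_n(n^\kappa\log n)^{-2}<\infty$ holds because $\kappa\ge 1/2$ (it becomes $\sum_n n^{-1}(\log n)^{-2}<\infty$ in the worst case $\kappa=1/2$), and $a(n)/a(n+1)\to 1$ is immediate. What remains is the oscillation bound $\sup_n\E\big(\sup_{t\in[n,n+1]}|X_{\pm,<,\kappa}^*(t)-X_{\pm,<,\kappa}^*(n)|\big)^2<\infty$, which I would derive along the lines of the proof of Lemma~\ref{lemma:X+diffbound}. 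Since $\lambda$ is supported on $(0,1]$, for $t$ large no jump entering over a time interval of length $1$ can reach the truncation level $t^\kappa$, so the increment of $X_{\pm,<,\kappa}^*$ over $[n,n+1]$ differs from that of $X_\pm^*$ only through the reshuffling of the truncation domain among points already present at time $n$; and $\E(X_\pm^*(t_2)-X_\pm^*(t_1))^2\le 3m_2(\lambda)m_{-1}(\pi)(t_2-t_1)^2$ by the computation in Lemma~\ref{lemma:X+diffbound}, which uses only $m_2(\lambda)<\infty$ (true here) and $m_{-1}(\pi)<\infty$. Feeding this into Lemma~\ref{lemma:aux-conv} gives $\lim_{t\to\infty}|X_{\pm,<,\kappa}^*(t)|/(t^\kappa\log t)\le 1/4$ a.s., which is the claim.

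I expect the main obstacle to be this last oscillation estimate. Because the truncation level $t^\kappa$ depends on $t$, the increment $X_{\pm,<,\kappa}^*(t)-X_{\pm,<,\kappa}^*(n)$ on $[n,n+1]$ is not an increment of a Poisson integral against a fixed kernel, and one must show that the contributions whose truncation status flips as $t$ ranges over $[n,n+1]$ have second moment controlled uniformly in $n$; this is handled by bounding such contributions by the same double integrals in $\pi$ and $\lambda$ as in Lemma~\ref{lemma:X+diffbound}, but it is the step requiring the most bookkeeping. By contrast, the Chernoff bound, the case check that $a\le 2\kappa$, and the Borel--Cantelli and Lemma~\ref{lemma:aux-conv} arguments are routine, and in particular the constant $1/4$ is forced exactly by the constraint $|\theta|\le 4t^{-\kappa}$ in Lemma~\ref{lemma:X<momgenfunc}.
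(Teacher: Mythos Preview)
Your proposal is correct and follows essentially the same route as the paper: a Chernoff bound from Lemma~\ref{lemma:X<momgenfunc} with $\theta=\pm 4t^{-\kappa}$, the case-check that the resulting exponent satisfies $a\le 2\kappa$, Borel--Cantelli along the integers, and then Lemma~\ref{lemma:aux-conv} with the oscillation control borrowed from the proof of Lemma~\ref{lemma:X+diffbound}. Your discussion is in fact more detailed than the paper's on two points---the verification of $a\le 2\kappa$ in each regime and the observation that the time-dependent truncation makes the oscillation bound the one nontrivial step---while the paper simply asserts that ``from the proof of Lemma~\ref{lemma:X+diffbound} it is clear that the statement remains true for $X_{\pm,<,\kappa}^*$''.
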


\begin{proof}
Fix $\varepsilon > 0$.
Applying Markov's inequality together with Lemma \ref{lemma:X<momgenfunc}
with $\theta = 4t^{-\kappa}$, we obtain
\begin{equation} \label{eq:Markov-X<-bound}
\begin{split}
& \p ( X_{+, <, \kappa} ^*(t) > (1/4 + \varepsilon) t^\kappa \log t  )  
= \p \left( e^{\theta X_{+, <, \kappa}^*(t) } > 
e^{\theta (1/4 + \varepsilon) t^\kappa \log t } \right) \\
& \leq 
\begin{cases}
\exp \left\{ c \theta^2 t^{\beta - \alpha + \kappa(2-\beta)} - \theta 
(1/4 + \varepsilon) t^\kappa \log t \right\} 
\leq c t^{-(1+4\varepsilon)},
& \alpha \leq \beta -1, \\
\exp \left\{ c \theta^2 t^{1 + \kappa (1-\alpha)_+} - \theta 
(1/4 + \varepsilon) t^\kappa \log t \right\} \leq c t^{-(1+4\varepsilon)},
& \alpha \geq \beta - 1,  
\end{cases}
\end{split}
\end{equation}
where we used that $\beta - \alpha + \kappa(2 -\beta) - 2 \kappa \leq 0$
for $\alpha \leq \beta -1$, and $1 + \kappa ( 1 - \alpha )_+ - 2 \kappa \leq 0$
for $\alpha \geq \beta - 1$.
The same bound for $- X_{+, <, \kappa}^*(t)$ combined with 
the Borel--Cantelli lemma shows that the convergence takes place on the 
subsequence $t_n = n$. From the proof of Lemma \ref{lemma:X+diffbound} it is clear 
that the statement remains true for $X_{\pm, <, \kappa}^*(t)$,
thus the result follows by Lemma \ref{lemma:aux-conv}. 

For $X_{-, <, \kappa}^*$ the same argument applies. In the corresponding version 
of \eqref{eq:Markov-X<-bound} one has to check that 
$\beta - \alpha + \kappa(2 - \beta) - 2 \kappa \leq 0$ for $\alpha \leq \beta$,
and $\kappa ( 2 - \alpha)_{+} - 2 \kappa \leq 0$ for $\alpha \geq \beta$.
Short calculations give that these inequalities hold whenever $\kappa$ satisfies 
the assumptions of the statement.
\end{proof}

\begin{lemma} \label{lemma:X>-0}
If for some $\gamma \geq 1$
\begin{equation*}
\dint_{(0, \infty) \times (0,1]}  \left( \frac{z}{x} \right)^\gamma 
\bone(z > x) \pi(\dd x) \lambda(\dd z) < \infty,
\end{equation*}
then almost surely $\mu(A(t;1/\gamma)^c)=0$ and $\mu(B(t;1/\gamma)^c)=0$ for $t$ large enough.
\end{lemma}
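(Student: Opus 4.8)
The plan is to reduce the statement to a statement about how many points of the Poisson random measure $\mu$ can ever lie in the bad sets, and then exploit that these sets ``shrink away'' as $t\to\infty$. Write $\kappa=1/\gamma$. The first observation is that if a point $(x,s,z)$ with $z\in(0,1]$ belongs to $A(t;\kappa)^c$ or $B(t;\kappa)^c$, then its weight $f_t(x,s)z$ exceeds $t^\kappa$, and since $f_t(x,s)z\le z/x$ for \emph{every} $t$, this forces $t<(z/x)^\gamma<\infty$; thus each point of $\mu$ is ``active'' (lies in such a set) only for $t$ in a bounded range. Hence it suffices to show that, almost surely, only finitely many points of $\mu$ are active at some $t\ge 1$: the supremum $T_0(\omega)$ of the (finitely many, finite) activity ranges is then finite, and $\mu(A(t;\kappa)^c)=\mu(B(t;\kappa)^c)=0$ for all $t>T_0(\omega)$. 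Since $\mu$ is Poisson with intensity $\nu=\pi(\dd x)\,\dd s\,\lambda(\dd z)$, this in turn reduces to bounding the $\nu$-measure of the relevant ``ever-active'' sets.

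For the positive-time part I would argue: if $(x,s,z)$ with $s>0$, $z\in(0,1]$ lies in $A(t;\kappa)^c$ for some $t\ge 1$, then $\tfrac zx(1-e^{-x(t-s)})>t^\kappa$ together with $t\ge s$ and $t\ge1$ gives $\tfrac zx>t^\kappa\ge(s\vee1)^\kappa$, hence $x<z$ and $(z/x)^\gamma>s$. So the ever-active positive-time points lie in $G_A:=\{(x,s,z):0<x<z\le1,\ 0<s<(z/x)^\gamma\}$, whose $\nu$-measure is $\int_{(0,1]}\int_{(0,z)}(z/x)^\gamma\pi(\dd x)\lambda(\dd z)=\dint_{(0,\infty)\times(0,1]}(z/x)^\gamma\bone(z>x)\pi(\dd x)\lambda(\dd z)<\infty$ by hypothesis. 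Thus $\mu(G_A)<\infty$ a.s.

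For the negative-time part the same crude argument fails, and the real work lies here. Writing $v=\tfrac zx e^{xs}$ and substituting $u=xt$, membership of $(x,s,z)$ (with $s\le0$, $z\in(0,1]$) in $B(t;\kappa)^c$ becomes $v\,x^\kappa(1-e^{-u})>u^\kappa$. Since $1-e^{-u}\le u^\kappa$ for all $u>0$ when $\kappa\in(0,1]$, the quantity $c_\kappa:=\inf_{u>0}u^\kappa/(1-e^{-u})$ satisfies $c_\kappa\ge1$, and being active at \emph{any} $t$ forces $v\,x^\kappa>c_\kappa$, i.e.\ $z e^{xs}>c_\kappa x^{1-\kappa}$; a fortiori this contains all negative-time points active at some $t\ge1$. (For $\gamma=1$ this is impossible, since $ze^{xs}\le z\le1\le c_\kappa x^0$, so $B(t;1)^c=\emptyset$ and nothing is to prove; assume $\gamma>1$.) With $s\le0$ the inequality $z e^{xs}>c_\kappa x^{1-\kappa}$ forces $x<(z/c_\kappa)^{\gamma/(\gamma-1)}\le z$ and confines $s$ to an interval of length $\tfrac1x\log\tfrac{z}{c_\kappa x^{1-\kappa}}$. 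The key elementary estimate is that on $\{0<x<(z/c_\kappa)^{\gamma/(\gamma-1)},\ z\le1\}$ one has $\tfrac1x\log\tfrac{z}{c_\kappa x^{1-\kappa}}\le c\,(z/x)^\gamma$: setting $w=c_\kappa x^{1-\kappa}\in(0,z)$ and $r=z/w>1$, the left side equals $c_\kappa^{-\gamma}(z/x)^\gamma\,\tfrac{\log r}{r^\gamma}$, and $\sup_{r\ge1}\tfrac{\log r}{r^\gamma}<\infty$. Integrating over $s$, then over $x\in(0,z)$, then over $z$, the ``ever-active'' negative-time set has $\nu$-measure at most $c\dint_{(0,\infty)\times(0,1]}(z/x)^\gamma\bone(z>x)\pi(\dd x)\lambda(\dd z)<\infty$, so it too carries finitely many points of $\mu$ a.s. Together with the positive-time bound and the reduction above, this proves the lemma.

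The step I expect to be the main obstacle is precisely this negative-time $\nu$-estimate. The naive necessary condition for a negative-time point ever to be active, $\tfrac zx e^{xs}>1$, yields an $s$-interval of length $\sim\tfrac1x\log\tfrac zx$ whose integral against $\nu$ is \emph{not} controlled by the hypothesis; one has to extract the sharper threshold $v\,x^\kappa>c_\kappa$, whose by-product is the constraint $x\lesssim z^{\gamma/(\gamma-1)}$ that makes the logarithmic interval length comparable to the power $(z/x)^\gamma$.
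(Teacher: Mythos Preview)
Your argument is correct. Both proofs rest on the same principle---only finitely many Poisson points can ever land in the bad sets, and each such point is active only for $t$ below $(z/x)^\gamma$---but the execution differs.

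The paper does not compute $\nu\bigl(\bigcup_{t\ge 1}A(t;\kappa)^c\bigr)$ or $\nu\bigl(\bigcup_{t\ge 1}B(t;\kappa)^c\bigr)$ directly. Instead it slices by $|s|$: the events $E_n=\{(x,s,z):z>xn^{1/\gamma},\ |s|\in(n,n+1)\}$ have $\sum_n\nu(E_n)\le\iint(z/x)^\gamma\bone(z>x)\,\pi(\dd x)\lambda(\dd z)$, so by Borel--Cantelli $\mu(E_n)=0$ for $n\ge N_0$. For $s>0$ this finishes quickly: a point of $A(t;\kappa)^c$ with $s>N_0$ lies in $E_{\lfloor s\rfloor}$ (since $z/x>t^\kappa\ge s^\kappa$), and the remaining points with $s\le N_0+1$, $z>x$ are finitely many. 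The paper then writes ``from which the statement follows'' for $B$ as well; the hidden step is essentially your observation: a point with $|s|>N_0$ has $z/x\le |s|^\kappa$, and combining this with $1-e^{-xt}\le xt$ and $ye^{-y}\le e^{-1}$ forces $|s|<t\,e^{-1/(1-\kappa)}<t$, contradicting $t^\kappa<z/x\le|s|^\kappa$.

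Your route avoids the slab decomposition and instead bounds the ever-active set in one shot. For $A$ this is no harder; for $B$ you pay for the directness by having to extract the sharp threshold $ze^{xs}>c_\kappa x^{1-\kappa}$ and then show $\tfrac1x\log\tfrac{z}{c_\kappa x^{1-\kappa}}\le c(z/x)^\gamma$ via the substitution $r=z/(c_\kappa x^{1-\kappa})$ and $\sup_{r\ge1}r^{-\gamma}\log r<\infty$. This is exactly the step the paper suppresses. Your handling of the boundary case $\gamma=1$ ($B(t;1)^c=\emptyset$ since $ze^{xs}\le 1$ while $c_1=1$) is also correct. In short: same strategy, different bookkeeping, with your version making the negative-time estimate fully explicit where the paper leaves it to the reader.
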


\begin{proof}
We have
\[
\nu( \{ (x,s,z): z > x n^{1/\gamma}, s \in (n, n+1) \} ) = 
\dint \bone ( z > x n^{1/\gamma} )  \pi(\dd x) \lambda(\dd z),
\]
thus
\[
\begin{split}
& \sum_{n=1}^\infty \nu( \{ (x,s,z): z > x n^{1/\gamma}, s \in (n, n+1) \} ) \\
& = \dint \sum_{n=1}^\infty \bone ( z > x n^{1/\gamma} ) 
\pi(\dd x) \lambda(\dd z) \\
& \leq \dint \left( \frac{z}{x} \right)^\gamma \bone(z > x) 
\pi(\dd x) \lambda(\dd z) < \infty,
\end{split}
\]
by the assumption. 
Therefore, the Borel--Cantelli lemma implies that for $n$ large enough
\begin{equation*}
\mu( \{ (x,s,z): z > x n^{1/\gamma}, |s| \in (n, n+1) \} ) = 0,
\end{equation*}
from which the statement follows.
\end{proof}

Next we handle the compensation of $X_{\pm, >, \kappa}^*$.

\begin{lemma} \label{lemma:compX>}
For any $\kappa \in (0,1)$
\begin{equation} \label{eq:comp-bound+}
\trint_{A(t;\kappa)^c}
\frac{z}{x} \left( 1 - e^{-x(t-s)} \right) 
 \nu(\dd x, \dd s, \dd z)  \leq 
\begin{cases}
c t^{\beta - \alpha - \kappa (\beta-1)}, & \alpha \leq \beta -1, \\
c t^{1 - \kappa \alpha}, &  \alpha \geq \beta -1,
\end{cases}
\end{equation}
and 
\begin{equation} \label{eq:nu-Acbound}
\nu ( A(t; \kappa)^c ) \leq 
\begin{cases}
c t^{\beta - \alpha - \kappa \beta}, & \alpha \leq \beta -1, \\
c t^{1 - \kappa (1+\alpha)}, &   \alpha \geq \beta -1, \\
\end{cases}
\end{equation}
while for the negative jumps
\begin{equation} \label{eq:comp-bound-}
\trint_{B(t;\kappa)^c}
\frac{z}{x} ( 1 - e^{-xt} ) e^{xs} 
\nu(\dd x, \dd s, \dd z)  \leq c t^{\beta - \alpha - \kappa (\beta-1)},
\end{equation}
and
\begin{equation} \label{eq:nu-Bcbound}
\nu ( B(t; \kappa)^c ) \leq 
c \log t \,  t^{\beta - \alpha - \kappa \beta}.
\end{equation}
\end{lemma}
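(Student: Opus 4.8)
The plan is to establish all four bounds by the same scheme, since each is an integral of the same shape against $\nu = \pi \otimes \Leb \otimes \lambda$ over a ``large value'' region. For the positive-jump sets I would first substitute $u = t - s$, so that $A(t;\kappa)^c$ becomes $\{(x,u,z) : \tfrac{z}{x}(1 - e^{-xu}) > t^\kappa,\ u \in [0,t),\ z \in (0,1]\}$, and then use the elementary inequality $1 - e^{-y} \le y \wedge 1$. Because $\tfrac{z}{x}(xu \wedge 1) = (zu) \wedge (z/x)$, membership in $A(t;\kappa)^c$ forces \emph{both} $zu > t^\kappa$ and $z/x > t^\kappa$; the second inequality gives $x < t^{-\kappa}$ (recall $z \le 1$) and the first gives $u > t^\kappa$, so the whole region is contained in an explicit product-type set. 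For \eqref{eq:nu-Acbound} one then bounds the $\nu$-measure of this set, while for the weighted bound \eqref{eq:comp-bound+} the integrand $\tfrac{z}{x}(1 - e^{-xu})$ is itself dominated by $(zu) \wedge (z/x)$.

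Next I would split the $x$-integration at the two natural thresholds: $x = t^{-1}$, where $xu \wedge 1$ changes behaviour since $u < t$, and $x = t^{-\kappa}$, the cutoff coming from $z/x > t^\kappa$ together with $z \le 1$. On each resulting piece the inner integral --- over $z$ for the measure bounds, and over $u$ and then $z$ for the weighted bound --- is an explicit power of $t$ and of $x$, and the remaining integral $\int x^{-v}\,\pi(\dd x)$ of a power $v \in \{\beta, 1+\beta\}$ is estimated through \eqref{eq:int-pi-lambda-bounds}. The dichotomy $\alpha \le \beta - 1$ versus $\alpha \ge \beta - 1$ in \eqref{eq:comp-bound+} and \eqref{eq:nu-Acbound} is exactly the dichotomy of which of the two estimates in \eqref{eq:int-pi-lambda-bounds} applies to $\int x^{-v}\,\pi(\dd x)$: if $v \ge 1 + \alpha$ one integrates near $1$ and picks up the factor $(t^{-1})^{1+\alpha - v}$, whereas if $v \le 1 + \alpha$ one integrates near $0$ and picks up $(t^{-\kappa})^{1+\alpha - v}$. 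Combining the three or four resulting powers of $t$, and using $\kappa \le 1$ and $\beta \ge 1$, yields the stated exponents, the non-dominant contributions being absorbed into the constant.

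For the negative-jump sets the only structural change is that, after fixing $x$ and $z$, the constraint $\tfrac{z}{x}e^{xs}(1 - e^{-xt}) > t^\kappa$ describes an interval of admissible $s \le 0$ whose lower endpoint is $s_0 = x^{-1}\log\bigl(xt^\kappa / (z(1 - e^{-xt}))\bigr)$, and on this interval $z(1 - e^{-xt})/(xt^\kappa) \le t^{1-\kappa}$ by $1 - e^{-xt} \le xt$ and $z \le 1$. For the measure bound \eqref{eq:nu-Bcbound} one integrates $1$ over this interval, obtaining the length $|s_0| = x^{-1}\log\bigl(z(1 - e^{-xt})/(xt^\kappa)\bigr) \le x^{-1}\log t$; this is precisely where the logarithmic factor in \eqref{eq:nu-Bcbound} originates, and the remaining double integral is treated as above once one notes that $B(t;\kappa)^c$ forces $z > t^{\kappa-1}$ and $z > xt^\kappa$. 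For the weighted bound \eqref{eq:comp-bound-} one instead integrates $\tfrac{z}{x}(1 - e^{-xt})e^{xs}$ over the same interval, which equals $\tfrac{z}{x^2}(1 - e^{-xt})(1 - e^{xs_0}) \le \tfrac{z}{x^2}(1 - e^{-xt})$; no logarithm appears, and from there the computation is identical to the positive-jump case.

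The substance of the argument, and the only place where care is genuinely needed, is the exponent bookkeeping across these sub-regions: in each of the cases $x \lessgtr t^{-1}$, $x \lessgtr t^{-\kappa}$ and $\alpha \lessgtr \beta - 1$ one selects the appropriate version of \eqref{eq:int-pi-lambda-bounds}, multiplies out the several powers of $t$, and checks that the outcome does not exceed the exponent claimed. The one subtle structural point is that the logarithmic factor must appear in \eqref{eq:nu-Bcbound} but in none of \eqref{eq:comp-bound+}, \eqref{eq:nu-Acbound}, \eqref{eq:comp-bound-}, which is why the $e^{xs}$-integration for the negative jumps must be carried out exactly rather than bounded crudely.
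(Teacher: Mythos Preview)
Your proposal is correct and follows essentially the same route as the paper: both arguments rest on the bound $1-e^{-y}\le y\wedge 1$, reduce the four integrals to sums of pieces of the form $\int x^{-v}\pi(\dd x)$ times explicit powers of $t$, and then invoke \eqref{eq:int-pi-lambda-bounds}, with the $\alpha\lessgtr\beta-1$ dichotomy arising from which branch of that estimate applies. The only organizational difference is that the paper partitions according to the hypersurface $xu=1$ (respectively $xt=1$) rather than at the fixed thresholds $x=t^{-1},\,t^{-\kappa}$, and it observes that \eqref{eq:nu-Acbound} follows from \eqref{eq:comp-bound+} by the single remark that on $A(t;\kappa)^c$ the integrand exceeds $t^\kappa$; your treatment of the logarithm in \eqref{eq:nu-Bcbound} versus its absence in \eqref{eq:comp-bound-} matches the paper exactly.
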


\begin{proof}
Let $K$ denote the left-hand side in \eqref{eq:comp-bound+}. 
Then
\[
\begin{split}
K & \leq \trint
\left[ z u \bone( ux \leq 1, zu > t^\kappa) 
+ \frac{z}{x} \bone( ux > 1, z > t^\kappa x) \right] \nu(\dd x, \dd u, \dd z) \\
& =: I_1 + I_2.
\end{split}
\]
Since in $I_1$ necessarily $u \geq t^\kappa$, we have by \eqref{eq:int-pi-lambda-bounds}
\[
\begin{split}
I_1  & \leq \int_{t^\kappa}^t u \dd u \int_{(0,u^{-1}]} \pi(\dd x) \int_{(t^\kappa/u, 1]} z \lambda (\dd z) \\ 
& \leq c t^{-\kappa(\beta-1)} \int_{t^\kappa}^t u^{\beta-1-\alpha} \dd u \leq 
\begin{cases} 
c t^{\beta - \alpha - \kappa (\beta -1)},& \hspace{-2pt}\alpha < \beta, \\
c \log t \, t^{\kappa ( 1-\alpha)},& \hspace{-2pt}\alpha = \beta, \\
c t^{\kappa ( 1 -  \alpha)},& \hspace{-2pt}\alpha < \beta.
\end{cases}
\end{split}
\]
Similarly, for $I_2$ after integration with respect to $s$ we have
\[
I_2  \leq t \int_{(t^{-1}, t^{-\kappa}]} x^{-1} \pi(\dd x)
\int_{(t^\kappa x, 1]} z \lambda(\dd z)  \leq 
\begin{cases}
c t^{\beta - \alpha - \kappa(\beta - 1)}, & \alpha \leq \beta -1, \\
c t^{1- \kappa \alpha}, & \alpha \geq \beta -1, 
\end{cases}
\]
and \eqref{eq:comp-bound+} follows. 
In exactly the same way we obtain \eqref{eq:nu-Acbound}. The only difference 
is that in both $I_1$ and in $I_2$ a factor $t^{-\kappa}$ appear.

\smallskip

Denoting by $L$ the left-hand side of \eqref{eq:comp-bound-}, we similarly have
\[
\begin{split}
L & \leq \trint 
\bigg[ 
z t e^{xs} \bone (xt \leq 1, zt e^{xs} > t^\kappa) \\
& \hspace{1.5cm} + 
\frac{z}{x} e^{xs} \bone (xt> 1,  z e^{xs} > t^\kappa x) \bigg] 
\nu(\dd x, \dd s, \dd z)  \\
& = : J_1 + J_2.
\end{split}
\]
In both $J_1$ and $J_2$ we bound the $\dd s$ integral by $x^{-1}$,
thus we obtain by using \eqref{eq:int-pi-lambda-bounds}
\[
\begin{split}
J_1 & \leq t \int_{(0, t^{-1}]} x^{-1} \pi(\dd x) 
\int_{(t^{\kappa-1}, 1]} z \lambda(\dd z) 
\leq c t^{\beta - \alpha - \kappa(\beta - 1)},
\end{split}
\]
and 
\[
\begin{split}
J_2 & \leq \int_{(t^{-1}, t^{-\kappa}]} x^{-2} \pi(\dd x) 
\int_{(t^\kappa x, 1]} z \lambda(\dd z) 
\leq 
\begin{cases}
c t^{\beta - \alpha - \kappa(\beta -1)}, & \alpha \leq \beta, \\
c t^{\kappa ( 1- \alpha)}, & \alpha \geq \beta,
\end{cases}
\end{split}
\]
proving \eqref{eq:comp-bound-}. The estimate 
\eqref{eq:nu-Bcbound} follows in the same way, except that we bound 
the $\dd s$ integral in both terms by $(1- \kappa) x^{-1} \log t$.
\end{proof}

\begin{lemma} \label{lemma:X>betanagy}
Assume that $1 + \alpha \leq \beta$. Then for 
any $\kappa > 1 - \tfrac{\alpha}{\beta}$ almost surely
$\mu(A(t; \kappa)^c) = 0$ and 
$\mu(B(t; \kappa)^c) = 0$ for $t$ large enough.
\end{lemma}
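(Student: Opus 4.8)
The plan is to reduce the assertion to two finiteness bounds for the intensity measure and then invoke the Poisson property. Concretely, I would show
\[
\nu\Big(\bigcup_{t \ge 1} A(t;\kappa)^c\Big) < \infty \qquad \text{and} \qquad \nu\Big(\bigcup_{t \ge 1} B(t;\kappa)^c\Big) < \infty
\]
(the unions may be taken over rational $t$ by continuity, so these are Borel sets). Granting this, $\mu$ has almost surely only finitely many points in each of the two sets; and if $(x,s,z)$ is such a point, then from $(x,s,z)\in A(t;\kappa)^c$ we get $t^\kappa < \tfrac{z}{x}(1-e^{-x(t-s)}) \le \tfrac{z}{x}$, so it belongs to $A(t;\kappa)^c$ only for $t < (z/x)^{1/\kappa}$, and likewise in the second case (where $s\le 0$) only for $t<(z/x)^{1/\kappa}$ since $t^\kappa < \tfrac{z}{x}e^{xs}(1-e^{-xt}) \le \tfrac{z}{x}$. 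Hence there is a finite random $T^*$ with $\mu(A(t;\kappa)^c)=\mu(B(t;\kappa)^c)=0$ for all $t > T^*$, as claimed. (If $\kappa\ge 1$ there is nothing to prove: for $z\le1$, $t\ge1$, $s\le t$ one has $\tfrac{z}{x}(1-e^{-x(t-s)})\le z(t-s)\le t\le t^\kappa$, and similarly for the $B$-term, so both complements are empty. Thus I may assume $\kappa\in(1-\alpha/\beta,1)$, which forces $\alpha>0$ and, using $\beta\ge 1+\alpha$, also $1/\kappa<1+\alpha$.)

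The core step is the estimate for $A$, where the point is to extract sharp constraints on the support. If $(x,s,z)\in A(t;\kappa)^c$ for some $t\ge\max(s,1)$, write $u=t-s>0$ and use the two elementary bounds $\tfrac{z}{x}(1-e^{-xu})\le zu$ and $\tfrac{z}{x}(1-e^{-xu})\le\tfrac{z}{x}$ together with $t^\kappa=(s+u)^\kappa$. Since $(s+u)^\kappa>u^\kappa$, the first bound gives $zu^{1-\kappa}>1$, hence $u>z^{-1/(1-\kappa)}$; then $(s+u)^\kappa>u^\kappa>z^{-\kappa/(1-\kappa)}$, and the second bound gives $\tfrac{z}{x}>z^{-\kappa/(1-\kappa)}$, i.e.\ $x<z^{1/(1-\kappa)}$; finally $\tfrac{z}{x}>(s+u)^\kappa>s^\kappa$ gives $s<(z/x)^{1/\kappa}$. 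Integrating the indicator of this region against $\nu(\dd x,\dd s,\dd z)=\pi(\dd x)\,\dd s\,\lambda(\dd z)$,
\[
\nu\Big(\bigcup_{t\ge1}A(t;\kappa)^c\Big) \le \int_{(0,1]}\lambda(\dd z)\int_{(0,\,z^{1/(1-\kappa)})}\Big(\frac{z}{x}\Big)^{1/\kappa}\pi(\dd x).
\]
Because $1/\kappa<1+\alpha$, the last estimate in \eqref{eq:int-pi-lambda-bounds} bounds the inner integral by $c\,z^{1/\kappa}\cdot z^{(1+\alpha-1/\kappa)/(1-\kappa)}=c\,z^{\alpha/(1-\kappa)}$; and since $\kappa>1-\alpha/\beta$ is equivalent to $\alpha/(1-\kappa)>\beta$, the right-hand side is $\le c\int_{(0,1]}z^\beta\lambda(\dd z)<\infty$ by the choice of $\beta$.

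For $B$ the reasoning is parallel. A point $(x,s,z)$ with $s\le0$ lying in some $B(t;\kappa)^c$ must satisfy $\tfrac{z}{x}e^{xs}>t^\kappa\ge1$, which restricts $s$ to an interval of length at most $\tfrac1x\log(z/x)$; running the same elimination with $1-e^{-xt}\le xt$ forces $x<(ze^{xs})^{1/(1-\kappa)}\le z^{1/(1-\kappa)}$. Hence
\[
\nu\Big(\bigcup_{t\ge1}B(t;\kappa)^c\Big) \le \int_{(0,1]}\lambda(\dd z)\int_{(0,\,z^{1/(1-\kappa)})}\frac1x\log\frac{z}{x}\,\pi(\dd x),
\]
and, bounding $\log(z/x)\le\varepsilon^{-1}(z/x)^\varepsilon$ with $\varepsilon\in\big(0,(\alpha-\beta(1-\kappa))/\kappa\big)$ — an interval nonempty precisely because $\kappa>1-\alpha/\beta$ — the inner integral is $\le c\,z^{(\alpha-\varepsilon\kappa)/(1-\kappa)}$ with exponent $\ge\beta$, so again the whole quantity is $\le c\int_{(0,1]}z^\beta\lambda(\dd z)<\infty$.

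The main obstacle is precisely obtaining the sharp support constraint $x<z^{1/(1-\kappa)}$: the crude bound $x<z$ would leave one with $\int_{(0,1]}z^{1+\alpha}\lambda(\dd z)$, which may diverge in exactly the regime $\beta>1+\alpha$ of this lemma, whereas $\int_{(0,1]}z^\beta\lambda(\dd z)$ is finite by construction. Relatedly, I do not expect a Borel--Cantelli argument along the integers using \eqref{eq:nu-Acbound} to suffice, since summability of $\nu(A(n;\kappa)^c)\lesssim n^{\beta-\alpha-\kappa\beta}$ would require $\kappa>1-\alpha/\beta+1/\beta$; it is essential to control the $\nu$-mass of the union over \emph{all} $t\ge1$ at once, equivalently to use that each point of $\mu$ lies in $A(t;\kappa)^c$ only over a bounded range of $t$.
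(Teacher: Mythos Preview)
Your argument is correct, but it follows a genuinely different route from the paper's. The paper does in fact use a Borel--Cantelli argument, contrary to your closing expectation, but not along the integers: it takes $\kappa'\in(1-\alpha/\beta,\kappa)$, uses \eqref{eq:nu-Acbound} to get $\nu(A(t;\kappa')^c)\le c t^{-\delta}$ with $\delta>0$, and applies Borel--Cantelli along the polynomial subsequence $t_n=n^d$ with $d\delta>1$; monotonicity of $t\mapsto 1-e^{-x(t-\tau)}$ together with $\kappa'<\kappa$ then fills in the intermediate times. Your approach instead shows directly that $\nu\bigl(\bigcup_{t\ge1}A(t;\kappa)^c\bigr)<\infty$ by extracting the sharp constraint $x<z^{1/(1-\kappa)}$, which lets you integrate out the $s$-variable and land on $\int z^{\alpha/(1-\kappa)}\lambda(\dd z)$ with exponent exceeding $\beta$. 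Your route is self-contained and avoids the subsequence/two-$\kappa$ trick at the cost of a somewhat more delicate geometric analysis of the region; the paper's route is shorter because it recycles the ready-made bound from Lemma~\ref{lemma:compX>}, but it needs the extra wiggle room $\kappa'<\kappa$ to bridge the gaps in the subsequence. Both are clean; your observation that integer Borel--Cantelli with exponent $\kappa$ falls just short is exactly why the paper passes to $n^d$ and a slightly smaller $\kappa'$.
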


\begin{proof}
Fix $\kappa' \in (1 - \alpha/\beta, \kappa)$.  Then for some 
$\delta = \delta(\kappa') > 0$, 
by \eqref{eq:nu-Acbound} in Lemma \ref{lemma:compX>}
\[
\nu ( A(t; \kappa')^c ) \leq c t^{-\delta}.
\]
Thus if $d \delta > 1$, by the Borel--Cantelli lemma
$\mu ( A(n^d, \kappa')^c) = 0$ a.s.~for $n$ large enough. Assume 
that $\mu ( A(t; \kappa)^c ) \geq 1$ for some $t \in ((n-1)^d, n^d]$.
Then there exists a point $(\xi, \tau, \zeta) \in A(t; \kappa)^c$, such that, if $n$ is large enough,
\[
t^\kappa \leq \frac{\zeta}{\xi} \left( 1 - e^{-\xi(t-\tau)} \right) \leq 
\frac{\zeta}{\xi} \left( 1 - e^{-\xi(n^d-\tau)} \right) \leq 
n^{d \kappa'},
\]
which is a contradiction. Therefore 
$\mu(A(t;\kappa)^c) = 0$ for $t$ large enough, as claimed.

The same proof works for $B(t;\kappa)^c$ since the extra $\log t$ 
factor in \eqref{eq:nu-Bcbound} plays no role here.
\end{proof}

We now combine the previous results to obtain the statement for 
$\lambda$ supported on $[-1,1]$.

\begin{corollary}\label{cor:zleq1}
Assume that $\int_{|z|\leq 1} |z| \lambda(\dd z)=\infty$, $a=0$, $b=0$
in \eqref{eq:Lambdace} and $\lambda$ is 
supported on $[-1,1]$. Then 
\begin{equation} \label{eq:lsgen}
\limsup_{t \to \infty} \frac{|X^*(t)|}{t \log t} \leq 1 \quad \text{a.s.}
\end{equation}
Furthermore, if $\beta \leq 1 + \alpha$ and for some $\gamma \in [1,2]$
\begin{equation} \label{eq:gammacond}
\int_{(0,\infty)} \int_{|z|\leq 1} \frac{|z|^\gamma}{x^\gamma} \bone(|z| > x) \lambda(\dd z) \pi (\dd x) < \infty,
\end{equation}
then 
\[
\limsup_{t \to \infty} \frac{|X^*(t)|}{t^{1/\gamma} \log t} \leq 1 \quad \text{a.s.}
\]

If $\beta \geq 1 + \alpha$, then for any $\gamma < 1/(1 - \tfrac{\alpha}{\beta})$ 
\[
\lim_{t \to \infty} \frac{X^*(t)}{t^{1/\gamma}} = 0  \quad \text{a.s.}
\]
\end{corollary}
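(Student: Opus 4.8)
The plan is to put together the machinery of this section. We first reduce to $\lambda((-\infty,0)) = 0$: splitting $\lambda = \lambda^+ + \lambda^-$ into its restrictions to $(0,1]$ and $[-1,0)$ and splitting $\mu$ accordingly gives $X^*(t) = X^{*,+}(t) + X^{*,-}(t)$, a sum of two independent pieces; $X^{*,-}$ is $-1$ times an integrated supOU process whose L\'evy measure is the reflection $|z|\,\lambda^-(\dd z)$ on $(0,1]$, so it suffices to establish the three bounds for $X^{*,+}$ with $1/2$ in place of $1$ in the two $\limsup$ statements, and add (a side of finite variation, if any, being handled by the simpler arguments of Section \ref{sec:proof-fv}). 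So from now on assume $\lambda$ is supported on $(0,1]$, recall $X^*(t) = X_+^*(t) - X_-^*(t)$, and fix a truncation level $\kappa \in [1/2,1]$ (to be specified) with the further split $X_\pm^* = X_{\pm,<,\kappa}^* + X_{\pm,>,\kappa}^*$. By Lemmas \ref{lemma:X+diffbound} and \ref{lemma:aux-conv} it suffices to prove the almost sure statements along the integers. The small-jump parts are controlled by Corollary \ref{cor:gamma>1}: once $\kappa$ satisfies its hypotheses, $\limsup_n |X_{\pm,<,\kappa}^*(n)| / (n^\kappa \log n) \le 1/4$ a.s., and together with the two analogous pieces from $X^{*,-}$ these four contributions of $1/4$ produce the constant $1$.

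For \eqref{eq:lsgen} take $\kappa = 1$: since $z \le 1$ and $x^{-1}(1-e^{-xt}) < t$ for every $x>0$, the exceptional sets $A(t;1)^c$ and $B(t;1)^c$ are empty, so $X_{\pm,>,1}^* \equiv 0$, $X^*(t) = X_{+,<,1}^*(t) - X_{-,<,1}^*(t)$, and since $\kappa=1$ trivially meets the hypotheses of Corollary \ref{cor:gamma>1}, \eqref{eq:lsgen} follows. Next, assume $\beta \le 1+\alpha$ and \eqref{eq:gammacond} with some $\gamma \in [1,2]$, and take $\kappa = 1/\gamma$. As \eqref{eq:gammacond} together with $\beta \le 1+\alpha$ forces $\gamma \le 1+\alpha$, this $\kappa$ satisfies the hypotheses of Corollary \ref{cor:gamma>1}, while Lemma \ref{lemma:X>-0} gives that a.s.\ $\mu(A(t;\kappa)^c) = \mu(B(t;\kappa)^c) = 0$ for all large $t$. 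For such $t$ the large-jump pieces $X_{\pm,>,\kappa}^*(t)$ equal the negatives of their compensators, which by Lemma \ref{lemma:compX>} are $O(t^{1-\kappa\alpha}) + O(t^{\beta-\alpha-\kappa(\beta-1)})$; since $\kappa = 1/\gamma \ge 1/(1+\alpha) \ge 1-\alpha/\beta$ (the last inequality using $\beta \le 1+\alpha$) both exponents are $\le \kappa$, so these terms are $O(t^\kappa) = o(t^\kappa \log t)$, and summing the four contributions gives the second assertion.

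Finally assume $\beta \ge 1+\alpha$. For $\gamma < 1$ the statement already follows from \eqref{eq:lsgen} since then $t \log t = o(t^{1/\gamma})$, so assume $\gamma \in [1, \beta/(\beta-\alpha))$. Now $\beta \ge 1+\alpha$ forces $\beta/(\beta-\alpha) \le 1+\alpha \le 2$, hence $\gamma < 2$ and $1/\gamma > 1-\alpha/\beta$. Since we need the genuine rate $o(t^{1/\gamma})$ whereas Corollary \ref{cor:gamma>1} only yields $O(t^\kappa \log t)$ for the small-jump part, $\kappa = 1/\gamma$ is too large; instead choose $\kappa$ with
\[
\max\!\left(\tfrac12,\, 1-\tfrac{\alpha}{\beta}\right) < \kappa < \frac1\gamma ,
\]
possible since $\gamma < 2$ and $\gamma < \beta/(\beta-\alpha)$. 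Then Corollary \ref{cor:gamma>1} applies (its threshold in the regime $\alpha \le \beta - 1$ is $1-\alpha/\beta < \kappa$), giving $X_{\pm,<,\kappa}^*(t) = O(t^\kappa \log t) = o(t^{1/\gamma})$; and $\kappa > 1-\alpha/\beta$ lets Lemma \ref{lemma:X>betanagy} remove the large jumps, so $X_{\pm,>,\kappa}^*(t)$ is eventually the negative of its compensator, bounded in Lemma \ref{lemma:compX>} by $O(t^{\beta-\alpha-\kappa(\beta-1)})$, and $\kappa > 1-\alpha/\beta$ yields $\beta-\alpha-\kappa(\beta-1) < 1-\alpha/\beta < 1/\gamma$, so this is $o(t^{1/\gamma})$ as well, which proves the last assertion. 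The main obstacle is this bookkeeping of exponents around $\kappa$: one must squeeze $\kappa$ strictly between $1-\alpha/\beta$ (required by Corollary \ref{cor:gamma>1} and Lemma \ref{lemma:X>betanagy}) and $1/\gamma$ (required so that the spurious $\log t$ from the moment-generating-function bound of Lemma \ref{lemma:X<momgenfunc} is absorbed), and it is the inequality $\beta/(\beta-\alpha) \le 1+\alpha$, valid precisely when $\beta \ge 1+\alpha$, that keeps this window nonempty while the compensator exponents of Lemma \ref{lemma:compX>} remain below the target rate.
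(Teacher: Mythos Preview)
Your proof is correct and follows essentially the same route as the paper: reduce to one-sided $\lambda$, split $X^* = X_+^* - X_-^*$ and then into small and large jump pieces at level $t^\kappa$, control the small pieces via Corollary~\ref{cor:gamma>1}, and eliminate the large pieces using Lemma~\ref{lemma:X>-0} (when $\beta\le 1+\alpha$) or Lemma~\ref{lemma:X>betanagy} (when $\beta\ge 1+\alpha$), with Lemma~\ref{lemma:compX>} taking care of the remaining compensators. Your argument is in fact slightly more careful than the paper's in two places: you track the constant $1$ as four contributions of $1/4$, and in the third case you pick $\kappa$ strictly between $1-\alpha/\beta$ and $1/\gamma$ (the paper writes ``$\kappa = 1-\alpha/\beta$'' even though Lemma~\ref{lemma:X>betanagy} requires strict inequality), and you dispose of $\gamma<1$ separately via \eqref{eq:lsgen}.
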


\begin{proof}
Assume first that $\lambda$ is supported on $(0,1]$. Then \eqref{eq:lsgen} follows from Corollary \ref{cor:gamma>1} with $\kappa = 1$. Recall that $\kappa = 1$ corresponds to no truncation, thus $X_{\pm,<,1}^*(t)=X_{\pm}^*(t)$.

Next assume \eqref{eq:gammacond}. Then $1+\alpha \geq \gamma$, and 
by assumption $\beta \leq 1+\alpha$. Lemma \ref{lemma:X>-0} implies that the terms
$X_{\pm,>,1/\gamma}^*(t)$ do not contribute to the limit. The result now follows from Corollary \ref{cor:gamma>1} and Lemma \ref{lemma:compX>} by taking $\kappa = 1/\gamma$.

For $\beta \geq 1 + \alpha$ the result follows from Corollary \ref{cor:gamma>1} and Lemma \ref{lemma:compX>} with $\kappa = 1 - \tfrac{\alpha}{\beta}$, and from Lemma \ref{lemma:X>betanagy}. 

For $\lambda$ supported on $[-1,1]$, we apply the previous to $\mu\bone(z>0)$ and $\mu\bone(z<0)$ separately.
\end{proof}

\begin{proof}[Proof of Theorem \ref{thm:infv}]
If we make a decomposition based on $\mu\bone(|z|\leq 1)$ and $\mu\bone(|z|>1)$, we obtain the results by applying Corollary \ref{cor:zleq1} and Theorem \ref{thm:fv}, respectively.
\end{proof}

\section{Proof of Theorem 4}\label{sec:proof-Gaussian}

We will use a general result for Gaussian processes from \cite{orey1972}. Since $X$ is 
Gaussian, we have that 
\[
\E e^{\ii \theta \Lambda(A)} = e^{-\frac{b}{2} \theta^2 (\pi \times \Leb (A))}
\]
and by 
Proposition 2.6.~in \cite{rajputrosinski1989} 
\[
\log \E e^{\ii \theta X^*(t)} 
= - \frac{b}{2} \theta^2 \int_{(0,\infty)} \int_0^{t}
\left( 1 -e^{-x u} \right) x^{-2} \dd u \pi(\dd x).
\]
Thus $X^*$ is a mean 0 Gaussian process with 
\[
Q(t) := \frac{1}{2} \Var(X^*(t)) = 
\frac{b}{2} \int_{(0,\infty)} \int_0^{t}
\left( 1 -e^{-x u} \right) x^{-2} \dd u 
\pi(\dd x).
\]
Hence, $Q$ is non-decreasing and $Q(0)=0$. 

\begin{enumerate}
\item If $m_{-2}(\pi)<\infty$, then
\begin{equation*}
Q(t) = \frac{b}{2} t \int_{(0,\infty)}\frac{1+tx-e^{-tx}}{tx} x^{-2} \pi(\dd x) \sim \frac{b}{2} t m_{-2}(\pi),
\end{equation*}
as $t\to \infty$. Now the result follows by Theorem 1.1 in \cite{orey1972} with $v(t)=\frac{b}{2} t m_{-2}(\pi)$ (see also Remark 1.3 in \cite{orey1972}).

\item As in \cite[Eq.~(5.8)]{GLT2019Limit}, we can 
write $Q(t)$ in the form
\[
Q(t) =  \frac{b}{2} \frac{\Gamma(1+\alpha)}{(2-\alpha)(1-\alpha)} \ell_1(t)  t^{2-\alpha},
\]
with $\ell_1$ slowly varying at infinity such that $\ell_1(t) \sim \ell(t)$. By Potter's 
bounds \cite[Theorem 1.5.6]{BGT}, for $\delta < \alpha/3$, there is $s_0>0$ such that
\[
\left(\frac{t}{s}\right)^{2-\alpha-\delta} Q(s) \leq Q(t) \leq 
\left(\frac{t}{s}\right)^{2-\alpha+\delta} Q(s), \quad t\geq s >s_0.
\]
Moreover,
\[
Q(t) \leq \frac{b}{2} \int_{(0,\infty)} \int_0^{t} u \dd u x^{-1} \pi(\dd x) = \frac{b}{4} 
m_{-1}(\pi) t^2.
\]
Thus, the assumptions of Theorem 1.1 in \cite{orey1972} hold with $v=Q$.
\end{enumerate}

\section*{Appendix A: A Fubini theorem}\label{app:A}

Here we show that the stochastic Fubini theorem from \cite{bnbasse2011} 
is indeed applicable in our setup.
If $\pi$ is a finite measure, this was justified in \cite[Lemma 4.1]{GLT21} (see also \cite[Lemma 3.1]{talarczyk2020} for a similar argument).

Put $g_t(x,s) = e^{-x(t-s)} \bone (s \leq t)$.
If $\E |X(1)| < \infty$, then we can directly use a stochastic Fubini theorem given in \cite[Theorem 3.1]{bnbasse2011} which guarantees the integrals exist. Indeed, the conditions of Theorem 3.1 and Remark 3.2 in \cite{bnbasse2011} boil down to showing that \\
(i) for every $u\in [0,t]$, $g_u(\cdot, \cdot)$ is in the Musielak-Orlicz space $L^{\phi_1}$, that is
\begin{equation*}
\dint_{(0,\infty)\times \R} \left(b g_u(x, s)^2 + 
\int_{\R} (|z g_u(x,s)|^2 \wedge |z g_u(x, s)|) \lambda(\dd z) \right) \pi(\dd x) \dd s < \infty,
\end{equation*}
(ii) it holds that
\begin{equation*}
\begin{split}
\int_0^t \bigg[ \dint_{(0,\infty)\times \R} \Big( & b g_u(x, s)^2 \\
& + \int_{\R} (|z g_u (x ,s)|^2 \wedge |z g_u (\xi, s)|) \lambda (\dd z) 
\Big) \pi(\dd x) \dd s \bigg] \dd u < \infty.
\end{split}
\end{equation*}

By \cite[Theorem 3.3]{rajputrosinski1989}, $L^{\phi_1}$ coincides with the space of 
$\Lambda$-integrable functions $g$ such that $\E |\int g d\Lambda|<\infty$. The supOU process is well defined, i.e.~$g_t(\cdot,\cdot)$ is $\Lambda$-integrable and since we have assumed $\E |X(u)|<\infty$, we conclude that condition (i) holds. By the change of variables, first $r=s-u$ and then $r=t-s$
\[
\begin{split}
&\int_0^t \int_{(0,\infty)} \int_{\R} \bigg(b e^{-2x (u - s)} \bone (s\leq u) \\
& \hspace{2.5cm} +\int_{\R} (|z e^{-x (u - s)}|^2 \wedge |z e^{-x (u - s)} |) \bone (s\leq u) \lambda(\dd z) \bigg) \pi(\dd x) \dd s \dd u\\
&=\int_0^t \int_{(0,\infty)} \int_{\R}  \bigg(b e^{2x r} \bone (r\leq 0) \\
& \hspace{2.8cm} + \int_{\R} (|z e^{x r}|^2 \wedge |x e^{x r} |) \bone (r\leq 0) \lambda (\dd z) \bigg) \pi(\dd x) \dd r \dd u\\
&=t \int_{(0,\infty)} \int_{\R} \bigg(b e^{2x r} \bone (r\leq 0) + \int_{\R} (|z e^{x r}|^2 \wedge |z e^{x r} |) \bone (r\leq 0) \lambda(\dd z) \bigg) \pi(\dd x) \dd r\\
&=t \int_{(0,\infty)} \int_{\R}\bigg(b e^{-2x (t-s)} \bone (s\leq t) \\
& \hspace{2.5cm} + \int_{\R} (|x e^{-x (t-s)}|^2 \wedge |x e^{-x (t-s)} |) \bone (s\leq t) \lambda(\dd z) \bigg) \pi(\dd x) \dd r\\
&=\int_{(0,\infty)} \int_{\R} \left(b g_t(x, s)^2 + \int_{\R} (|x g_t(x,s)|^2 \wedge |x g_t(x, s)|) \lambda(\dd z) \right) \pi(\dd x) \dd s < \infty,
\end{split}
\]
hence, (ii) follows from (i).

Suppose now that $\E |X(1)| = \infty$. By the L\'evy-It\^o decomposition of $\Lambda$, we may write it in the form
\begin{equation*}
\Lambda (A) = \Lambda_1 (A) + \Lambda_2(A),
\end{equation*}
where $\Lambda_1$ and $\Lambda_2$ are independent random measures such that $\Lambda_1$ has the characteristic quadruple $(0,0,\lambda_1,\pi)$, $\lambda_1(\dd z)=\lambda(\dd z) \bone (|z|>1)$ and $\Lambda_2$ has characteristic quadruple $(a,b,\lambda_2,\pi)$, $\lambda_2(\dd z)=\lambda(\dd z) \bone (|z|\leq 1)$. By \cite[Proposition 3.3.9]{samorodnitsky2016} we have that
\begin{equation*}
\begin{split}
& \int_{(0,\infty) \times \R} g_u(x, s) \Lambda(\dd x, \dd s) \\
& = \int_{(0,\infty)  \times \R} g_u(x, s) \Lambda_1(\dd x, \dd s) + \int_{(0,\infty)  \times \R} g_u(x, s) \Lambda_2(\dd x, \dd s), \ \text{a.s.}
\end{split}
\end{equation*}
For the integral with respect to $\Lambda_2$ we can apply \cite[Theorem 3.1]{bnbasse2011} as in the previous case. It remains to consider $\Lambda_1$, which is a compound Poisson random measure. If $(\xi_k,\tau_k,\zeta_k)_{k\geq 0}$ are the points of $\Lambda_1$, then the corresponding supOU process $X_1(u)$ can be written as
\begin{equation*}
X_1(u) = \sum_{\tau_k\leq 0} \zeta_k e^{-\xi_k (u- \tau_k)} + \sum_{0<\tau_k\leq u} \zeta_k e^{-\xi_k (u- \tau_k)}.
\end{equation*}
The second sum has finitely many terms a.s.~and for the first one we have
\begin{equation*}
\int_{0}^t \sum_{\tau_k\leq 0} |\zeta_k| e^{-\xi_k (u- \tau_k)} \dd u \leq t \sum_{\tau_k\leq 0} |\zeta_k| e^{\xi_k \tau_k},
\end{equation*}
which is finite a.s.~since
\begin{equation*}
\trint_{(0,\infty) \times (-\infty, 0) \times \R} (|z| e^{x s} \wedge 1) \pi(\dd x) \dd s \mu_1(\dd z)<\infty.
\end{equation*}

\section*{Appendix B: An exotic example}\label{app:B}

Here we give an example for $\pi$ and $\lambda$ such that $\beta > 1 + \alpha$ and 
\begin{equation*}
\dint_{(0,\infty) \times \R} \frac{|z|^\gamma}{x^\gamma} \bone ( |z| > x) 
\pi(\dd x) \lambda(\dd z)   < \infty,
\end{equation*}
still holds with $\gamma > 0$.
Let $a_n = 2^{-2^n}$ and $a \geq 0$. 
For $r \in (a_n, a_{n-1}]$ put $\pi( (0,r] ) = a_n^{1+a}$.
Then
$\pi (\{ a_n  \} ) = a_n^{1+a} - a_{n+1}^{1+a} \sim a_n^{1+a}$, thus
\[
\int_{(0,1]} x^{-1 - \alpha} \pi (\dd x) = \sum_{n=1}^\infty a_n^{-1-\alpha} \pi(\{a_n\})
< \infty
\]
if and only if $a > \alpha$. In particular, $\alpha_0 = a$ and $\alpha < a$.

Let $b > 0$ and  for $r \in (a_n ,a_{n-1}]$ let 
$\overline \lambda(r) = \lambda((r,1]) = a_{n-1}^{-b}$.
Then $\lambda( \{ a_n \}) = a_n^{-b} - a_{n-1}^{-b} \sim a_n^{-b}$,
therefore,
\[
 \int_{(0,1]} z^\beta \lambda(\dd z) = \sum_{n=1}^\infty a_n^{\beta}  \pi(\{a_n\}) < \infty
\]
if and only if $b < \beta$. Thus $\beta_0 = b$ and $\beta > b$.
Furthermore, for $\gamma < b$, $r \in (0,1)$
\[
\int_{(r,1]} z^\gamma \lambda(\dd z) \sim \sum_{a_n > r} a_n^{\gamma - b}
\sim a_{\psi(r)}^{\gamma - b},
\]
where $\psi(r) = \max \{ k \colon a_k > r \}$.

Thus 
\[
\dint \frac{z^\gamma}{x^\gamma} \bone ( z > x) \lambda (\dd z) \pi (\dd x)
\sim \sum_n a_n^{1+a - \gamma} a_{\psi(a_n)}^{\gamma - b} =
\sum_n a_{n-1}^{2(1+a - \gamma) + \gamma - b}, 
\]
where we used that $\psi(a_n) = n-1$ and $a_n = a_{n-1}^2$. The latter sum is 
finite if and only if $2(1+a - \gamma) + \gamma - b > 0$, which is equivalent to
$\gamma < 2(1+a) - b$. This is possible even if $b > 1 + a$.

\bigskip

\noindent
\textbf{Acknowledgement.} 
We thank the referees and the AE for constructive remarks and suggestions which 
greatly improved our paper.
DG was partially supported by the Croatian Science Foundation 
(HRZZ) grant Scaling in Stochastic Models (IP-2022-10-8081).
PK was supported by the J\'{a}nos Bolyai Research Scholarship of the Hungarian Academy of Sciences, and by the NKFIH grant FK124141.


\end{document}